\newtheorem{thm}{Theorem}
\newtheorem{lemma}[thm]{Lemma}
\newtheorem{prop}[thm]{Proposition}
\newtheorem{cor}[thm]{Corollary}
\theoremstyle{remark}
\newtheorem{remark}[thm]{Remark}
\theoremstyle{definition}
\newtheorem{definition}[thm]{Definition}
\numberwithin{thm}{section}
\numberwithin{equation}{section}
\definecolor{green}{rgb}{0.0, 0.5, 0.5}
\definecolor{lgray}{gray}{0.9}
\definecolor{llgray}{gray}{0.95}
\definecolor{lllgray}{gray}{0.975}
\newcommand{\black}{\color{black}}
\newcommand{\magenta}{\color{magenta}}
\newcommand{\cjf}[1]{\magenta {\tt [JF: #1]} \black }
\newcommand{\jf}[1]{\magenta { #1} \black }
\newcommand{\nc}{\newcommand}
\nc{\la}{\label}
\nc{\ba}{\begin{array}}
\nc{\ea}{\end{array}}
\nc{\bs}{\begin{split}}
\nc{\es}{\end{split}}
\newcommand{\R}{\mathbb{R}}
\newcommand{\C}{\mathbb{C}}
\newcommand{\cF}{\mathcal{F}}
\nc{\al}{\alpha}
\nc{\del}{\delta}
\nc{\h}{\delta}
\nc{\G}{\Gamma}
\nc{\et}{\eta} 
\nc{\g}{\gamma}
\nc{\gam}{\gamma}
\nc{\ka}{\kappa}
\nc{\lam}{\lambda}
\nc{\Lam}{\Lambda}
\nc{\Om}{\Omega}
\nc{\om}{\omega}
\nc{\ta}{\tau}
\nc{\w}{\omega}
\nc{\io}{\iota}
\nc{\z}{\zeta}
\nc{\s}{\sigma}
\nc{\Si}{\Sigma}
\nc{\vphi}{\varphi}
\nc{\e}{\epsilon}
\newcommand{\veps}{\varepsilon}
\nc{\bP}{\bar{P}}
\nc{\bQ}{\bar{Q}}
\nc{\ran}{\rangle}
\nc{\lan}{\langle}
\newcommand{\ra}{\rightarrow}
\newcommand{\ls}{\lesssim}
\newcommand{\one}{\mathbf{1}}
\newcommand{\supp}{\operatorname{supp}}
\newcommand{\re}{\operatorname{Re}}
\newcommand{\im}{{\rm Im}}
\nc{\bfone}{{\bf 1}}
\newcommand{\p}{\partial}
\newcommand{\n}{\nabla}
\def\jt{\langle t \rangle}
\newcommand{\DETAILS}[1]{}
\newcommand{\x}{\lan x\ran}
\newcommand{\bt}{\lan t\ran}
\nc{\den}{\text{den}}
\nc{\ex}{\text{xc}}
\nc{\Ex}{\text{Xc}}
\nc{\jx}{\langle x \rangle}
\begin{document}

\title[Maximal Velocity of Propagation for HE]{Maximal Speed of Quantum Propagation \\ for the Hartree equation}

\author{J. Arbunich}
\address{Jack Arbunich,  Department of Mathematics, University of Toronto, 40 St. George street, Toronto, 
  M5S 2E4, Ontario, Canada}
\email{jack.arbunich@utoronto.ca}

\author{J. Faupin}
\address{Jeremy Faupin, Institut Elie Cartan de Lorraine, Universit\'e de Lorraine, 57045 Metz Cedex 1, France} \email{jeremy.faupin@univ-lorraine.fr}
\email{}

\author{F. Pusateri}
\address{Fabio Pusateri,  Department of Mathematics, University of Toronto, 40 St. George street, Toronto, 
  M5S 2E4, Ontario, Canada}
\email{fabiop@math.toronto.edu}

\author{I. M. Sigal} 
\address{Israel Michael Sigal, Department of Mathematics, University of Toronto, 40 St. George street, Toronto,   M5S 2E4, Ontario, Canada}
\email{im.sigal@utoronto.ca}



\begin{abstract} 
We prove maximal speed estimates for nonlinear quantum propagation
in the context of the Hartree equation. 
More precisely, under some 
regularity and integrability assumptions 
on the pair (convolution) potential, we construct a set of energy and space 
localized initial conditions such that, up to time-decaying tails, solutions starting in this set 
stay within the light cone of the corresponding initial datum.
We quantify precisely the light cone speed, 
and hence the speed of nonlinear propagation, in terms of the momentum of the initial state.
\end{abstract}

\maketitle



\date{June 16, 2021}

\section{The Problem and Results}\label{sec:probl-res}

In contrast to the key principle of relativity, 
in Quantum Mechanics, a local change of initial conditions effects the solutions everywhere instantaneously. 
As Quantum Mechanics is the  theory of   the quantum matter in non-extreme  conditions,   
it is  important to understand the  limitations this imposes   
and to investigate  universal properties of the Quantum Mechanical (QM) propagation in some depth.  

QM predictions are  probabilistic 
 and therefore so is the  characterization of quantum evolution. It was shown in \cite{SigSof}  that, 
 assuming the energy is bounded initially, 
the supports of solutions of  the Schr\"odinger equation, up to vanishing in time probability tails, 
spread with a finite speed. This result was improved in \cite{HeSk,Skib,ArbPusSigSof} 
and extended in \cite{BonyFaupSig} to photon interacting with an atomic or molecular system, 
while \cite{FLS1, FLS2} developed a related approach in condensed matter physics.

For an electron in an atom the maximal velocity of propagation obtained in \cite{ArbPusSigSof} 
is close to the one computed heuristically and to the one observed: of the order of $10^5$m/s  
which is less than $0.1\%$ of the speed of light. Hence, in a few body case, 
the QM predictions here are fairly reliable.
 
Though bounds obtained in \cite{SigSof, HeSk,Skib} and especially in \cite{ArbPusSigSof} 
are valid for rather general class of potentials, for many-particles systems, 
their dependence on the number of particles (which in applications could vary
from a few to $10^{20}$) is rather poor.

 In this paper, we prove 
   bounds on  the speed of propagation 
for bosonic many-body systems in the mean-field approximation, i.e. for the Hartree equation (HE), 
\begin{align}\label{HE}
i\frac{\partial}{\partial t} \psi_{t} =(-\frac{1}{2}\Delta + V) \psi_{t} + v* |\psi_t|^2 \psi_{t}, \end{align}
 in the physical space $\R^d, d \ge 3$. 
Here  $V$ and $v$ are real functions, external (`one-body') and internal (interparticle, or `pair') potentials.  

We make the following two sets of assumptions on the pair convolution potential $v$,
and the external potential $V$:

\smallskip

\setlength{\leftmargini}{1.5em}
\begin{itemize}
\item 
We assume that $v:\R^d\to\R$ satisfies
\footnote{The regularity assumptions on $v$ 
are probably not optimal and can be weakened.
We however decided to state things in this way for technical convenience.}
\begin{align}\label{v-cond}
\begin{split}
& v \in W^{\gamma,(q,\infty)}  \text{ if } 1 < q < d/2,  
\quad \text{for}  \quad d/(2q)<\gamma,
\\
& \mbox{or} \quad v \in W^{\gamma,1} 
  \quad \text{with}  \quad \gamma > d/2,
\end{split}
\end{align}
where $W^{\gamma,q}$ is the standard Sobolev space ($W^{\gamma,2}\equiv H^\gamma$), 
and $W^{\gamma,(q,\infty)}$ denotes the Lorentz-Sobolev space; see \eqref{LorentzSobSp} below.

\smallskip
\item
We assume that $V:\R^d\to\R$ satisfies Yajima-type conditions\footnote{Note
that these conditions are not optimal, and we opted to state them in 
a simple and concise form that applies to all dimensions $d\geq 3$.
We refer to the cited papers for more precise assumptions.}
(cf. \cite{Yajima,FincoYajima,RodSchl}, \cite[Eq. (2.10)]{GHW}, and references therein):
\begin{align}\label{Aas1}
\begin{split}
&| \x^{\sigma} \langle \nabla\rangle^{\alpha} V | \lesssim 1, \qquad \sigma \geq \tfrac{3}{2}d + 3, \quad 
  \alpha \leq \gamma + \lfloor \tfrac{d-1}{2} \rfloor,
\end{split}
\end{align}
where $\langle x \rangle := \sqrt{1+|x|^2}$.

\smallskip
\item Furthermore, we let $H:=-\frac{1}{2}\Delta + V$ and assume that
\begin{align}
\label{Aas2'}
&  \text{$H$ has neither nonpositive eigenvalues nor a resonance at } 0.
\end{align}
\end{itemize}

We recall that $H:=-\frac{1}{2}\Delta + V$ is said to have a resonance at $0$ if the equation $Hu=0$ 
has a distributional solution $u\in \langle x\rangle^{\nu}L^2
= \{f=\langle\cdot\rangle^\nu g \, | \, g\in L^2\}$, for any $\nu>\frac12$.  
One can show that there are no zero energy resonances in dimensions $d \ge 5$.
In Proposition \ref{prop:condV} below we give explicit restrictions
on $V$ for which condition \eqref{Aas2'} is satisfied.

The assumptions \eqref{Aas1}-\eqref{Aas2'} are more than enough to guarantee
that the propagator $e^{-itH}$ is bounded on the Sobolev space $H^\gamma$,
and satisfies the same dispersive estimates as the propagator $e^{it \Delta/2}$; 
see \eqref{propagHsbound} and \eqref{eq:disp}.

\smallskip
Denote $\chi = \chi(r)$ for a smooth bump function supported in $[0,1]$ and such that $\chi=1$ on $[0,1/2]$. 
Let $\chi_A$ denote the characteristic function of a set $A$. 
For $I\subset\R$ a bounded open interval, we define the upper speed 
(or momentum, as the mass of particles is set equal to $1$) bound given that the energy 
of the initial state $\psi_0$ is supported in $I$ as
\begin{align}\label{kg}
k_I:= {\||\n|\chi_{I}(H)\|}.
\end{align}
Finally, let $\mathcal{B}^s(\varepsilon)$ denote the ball in $H^s$ centered at the origin and of radius $\varepsilon>0$.
With this, we formulate our main result.

\begin{thm}[Maximal propagation speed for HE]\label{thm:max-vel-HE0} 
Assume Conditions \eqref{v-cond}, \eqref{Aas1} and \eqref{Aas2'}.
Let $I$ be a bounded open interval, $g \in C^\infty_0(I;\R)$,  
$b>0$  and $s\ge\gamma$.  
Then, there exist $\varepsilon > 0$ and a set 
\begin{equation}\label{subset_S} 
S_{g, b} \subset \mathcal{B}^s(C\varepsilon), 
\end{equation}
  for some absolute $C>0$ (see \eqref{setSdef}),
  such that the following hold true: 

\smallskip
(i) $S_{g, b}$ is in one-to-one correspondence with
 the set  $g(H) \chi(|x|/b) \mathcal{B}^s(\varepsilon)$;

\smallskip
(ii) For any initial condition $\psi_0 \in S_{g, b}$,  the Hartree equation \eqref{HE} has a global solution in $H^{s}$
and this solution satisfies the estimate 
\begin{align}\label{max-vel-est-nl0}
\| \chi_{\{ |x| \geq c |t|+a \}}  \psi_t\| \ls \varepsilon \langle t \rangle^{-1/2},
\end{align}
for any constants $c$ and $a$ satisfying $c>k_I$ and $a>b$. 
\end{thm}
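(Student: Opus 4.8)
\emph{Strategy and set-up.} The plan is to treat the Hartree nonlinearity $W_t := v*|\psi_t|^2$ as a \emph{self-generated} potential which, for small data, is small, smooth and decaying in time, and to couple small-data scattering theory for \eqref{HE} with a linear maximal-speed (positive-commutator) estimate for $H=-\tfrac12\Delta+V$. Under \eqref{v-cond}, \eqref{Aas1}, \eqref{Aas2'} equation \eqref{HE} is globally well-posed in $H^s$ for data in $\mathcal{B}^s(\varepsilon)$, $\varepsilon$ small: since $e^{-itH}$ obeys the same dispersive and Strichartz bounds as $e^{it\Delta/2}$ and is bounded on $H^\gamma\subseteq H^s$, a standard contraction and bootstrap on $\psi_t=e^{-itH}\psi_0-i\int_0^t e^{-i(t-s)H}W_s\psi_s\,ds$ yields a unique global solution with $\|\psi_t\|_{H^s}\lesssim\varepsilon$ and $\|\psi_t\|_{L^\infty}\lesssim\varepsilon\langle t\rangle^{-d/2}$; feeding this back and using \eqref{v-cond} gives $\|W_t\|_{W^{\gamma,\infty}}\lesssim\varepsilon^2\langle t\rangle^{-\mu}$ for some $\mu>2$. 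One defines $S_{g,b}$ as the image $g(H)\chi(|x|/b)\mathcal{B}^s(\varepsilon)$, cf. \eqref{setSdef}; boundedness of $g(H)\chi(|x|/b)$ on $H^s$ gives $S_{g,b}\subset\mathcal{B}^s(C\varepsilon)$ and makes (i) immediate. The two features used below are: every $\psi_0\in S_{g,b}$ lies in $\Ran g(H)\subseteq\Ran\chi_J(H)$ for any open $J$ with $\supp g\subseteq J\subseteq I$, so (using that $\chi_I(H)$ is a projection) $\||\nabla|\psi_0\|\le k_I\|\psi_0\|$; and $\chi(|x|/b)\phi_0$ is supported in $\{|x|\le b\}$, so by pseudolocality of $g(H)$ (which uses \eqref{Aas1}), $\|\chi_{\{|x|\ge a_1\}}\psi_0\|\le C_N\langle a_1-b\rangle^{-N}\varepsilon$ for all $a_1>b$, $N$.

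\emph{Linear maximal-speed estimate.} For $f$ energy-localized in $I$ and spatially supported, up to rapidly decaying tails, in $\{|x|\le b\}$ as above, I would prove, for $c_1>k_I$ and $a_1>b$,
\begin{equation*}
\big\| \chi_{\{|x|\ge c_1 t+a_1\}}\, e^{-itH} f \big\|\le C_N\,\langle t\rangle^{-N}\,\|f\|,
\end{equation*}
by a positive-commutator argument in the spirit of \cite{ArbPusSigSof,SigSof}: one tests the flow against a propagation observable $\phi_t$ supported in $\{|x|\ge c_1't+a_1'\}$, with $k_I<c_1'<c_1$, $b<a_1'<a_1$, and a suitable time-dependent length scale, and estimates the Heisenberg derivative $\mathbf{D}\phi_t=\partial_t\phi_t+i[H,\phi_t]$. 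The $\partial_t$-term contributes $-c_1'$ times the transition density, $i[-\tfrac12\Delta,\phi_t]$ contributes the symmetrization of the radial momentum $\tfrac{x}{|x|}\cdot p$ times the transition density, and $[V,\phi_t]=0$; since the radial momentum is $\le k_I<c_1'$ on $\Ran\chi_I(H)$, $\mathbf{D}\phi_t\le0$ up to symbol errors that are integrable for the right length scale. The boundary term at $t=0$ is $\le C_N\langle a_1'-b\rangle^{-N}$, and the slack $c_1-c_1'>0$, $a_1-a_1'>0$ upgrades the resulting bound on $\{|x|\ge c_1't+a_1'\}$ to the one stated on the strictly smaller region. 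Replacing $-\Delta$ by $H$ in this argument only uses the pseudolocality and dispersive inputs granted by \eqref{Aas1}--\eqref{Aas2'}.

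\emph{Nonlinear estimate.} Fix $c_1\in(k_I,c)$, $a_1\in(b,a)$; it suffices to bound $\|\chi_{\{|x|\ge c_1t+a_1\}}\psi_t\|$. First, the high-frequency part $\psi_t^{\mathrm{hi}}:=(1-\chi_{J'}(H))\psi_t$ (for $J'\supset J$ slightly larger) stays small: $\psi_0^{\mathrm{hi}}=0$ since $\psi_0\in\Ran g(H)$, and a Gr\"onwall estimate for $\tfrac{d}{dt}\|\psi_t^{\mathrm{hi}}\|^2$ using the calculus bound $\|(1-\chi_{J'}(H))W_t\chi_J(H)\|\lesssim\|W_t\|_{W^{\gamma,\infty}}$ (valid since $\gamma>d/2$) gives $\|\psi_t^{\mathrm{hi}}\|+\||\nabla|\psi_t^{\mathrm{hi}}\|\lesssim\varepsilon^{3/2}$ (interpolating with $\|\psi_t\|_{H^s}\lesssim\varepsilon$), with extra time decay after iterating. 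One then runs the propagation-observable argument along the \emph{nonlinear} flow: since $\phi_t$ is a function of $x$, $[W_t,\phi_t]=0$, so the nonlinearity enters $\mathbf{D}\phi_t$ only through the state $\psi_t$, and wherever energy localization was used one inserts $\chi_{J'}(H)$ and pays an error $\lesssim\||\nabla|\psi_t^{\mathrm{hi}}\|\cdot\|\chi_{\mathrm{transition}}\psi_t\|$. Choosing the observable and its length scale so that this and the second-order symbol errors produce a differential inequality
\begin{equation*}
\frac{d}{dt}\langle\psi_t,\phi_t\psi_t\rangle\le-\frac{\kappa}{\langle t\rangle}\langle\psi_t,\phi_t\psi_t\rangle+E_t,\qquad \kappa>0,\quad \int_0^\infty\langle t\rangle\,E_t\,dt<\infty,
\end{equation*}
with negligible boundary term, integration gives $\langle\psi_t,\phi_t\psi_t\rangle\lesssim\varepsilon^2\langle t\rangle^{-1}$; since $\phi_t\ge\chi_{\{|x|\ge c_1't+a_1'\}}$ and $\{|x|\ge c_1t+a_1\}\subset\{|x|\ge c_1't+a_1'\}$, this yields \eqref{max-vel-est-nl0}. (One could instead organize the nonlinear step through the Duhamel formula, splitting $W_s\psi_s$ into its parts inside and outside the time-$s$ cone, using the linear estimate on the former and the decay of $W_s$ on the latter; the rate $\langle t\rangle^{-1/2}$ is the delicate output in either route.)

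\emph{Main obstacle.} The heart of the proof is the nonlinear step: propagating approximate energy localization and quantifying the $O(\varepsilon^{3/2})$ high-frequency leakage — here the regularity of $v$ in \eqref{v-cond} and the dispersive decay of $\psi_t$ are essential, ensuring $W_t$ is smooth with a small, rapidly time-decaying norm — and then choosing the time-dependent propagation observable so that the combined symbol and nonlinear error terms are integrable against $\langle t\rangle\,dt$, which is exactly what pins the rate at $\langle t\rangle^{-1/2}$. The small-data well-posedness and the linear pseudolocality/dispersive ingredients are routine by comparison.
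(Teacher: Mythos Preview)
Your proposal has a genuine gap concerning the definition of $S_{g,b}$ and, consequently, the mechanism for energy localization along the nonlinear flow.

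You set $S_{g,b}=g(H)\chi(|x|/b)\mathcal{B}^s(\varepsilon)$, i.e.\ you localize in energy with the \emph{bare} cutoff $g(H)$. The paper does something quite different (and this is the content of the reference to \eqref{setSdef}): it first constructs the \emph{asymptotic} energy cutoff $g_+^\psi(H):=\lim_{\tau\to\infty}(U_\tau^\psi)^{-1}g(H)U_\tau^\psi$, then defines $S_{g,b}$ as the image of $g(H)\chi(|x|/b)\mathcal{B}^s(\varepsilon)$ under the map $\Phi_g$ solving the fixed-point problem $\psi_0=g_+^{\Psi(\psi_0)}(H)\phi$. The reason this is essential is the pull-through relation
\[
\big\|U_t^\psi g_+^\psi(H)-g(H)U_t^\psi\big\|\lesssim w_t:=\int_t^\infty\|W_r^\psi\|_{W^{1,\infty}}\,dr\longrightarrow 0,
\]
so that $\psi_t=U_t^\psi\psi_0$ is energy-localized up to an error that \emph{decays} in $t$. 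With your choice $\psi_0=g(H)\phi_0$, the analogous error is $\|U_t^\psi g(H)-g(H)U_t^\psi\|\lesssim\int_0^t\|[g(H),W_r^\psi]\|\,dr$, which is $O(\varepsilon^2)$ but does \emph{not} decay as $t\to\infty$. Equivalently, your Gr\"onwall argument for $\psi_t^{\mathrm{hi}}=(1-\chi_{J'}(H))\psi_t$ gives $\|\psi_t^{\mathrm{hi}}\|\lesssim\varepsilon^3$ uniformly in $t$, with no time decay (the forcing $(1-\tilde g(H))W_t\psi_t$ is integrable, so $\psi_t^{\mathrm{hi}}$ scatters to a generically nonzero limit). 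When you insert this into the propagation-observable estimate, the cross term contributes an error of order $s^{-1}\varepsilon^3$ to $\partial_t\langle\Phi_{ts}\rangle_t$; integrating over $[0,t]$ and setting $s=t$ leaves a non-decaying $O(\varepsilon^3)$ contribution to $\langle\Phi_{tt}\rangle_t$, so you only get $\|\chi_{\{|x|\ge ct+a\}}\psi_t\|\lesssim\varepsilon^{3/2}$ for large $t$, not $\varepsilon\langle t\rangle^{-1/2}$. Your claimed ``extra time decay after iterating'' for $\psi_t^{\mathrm{hi}}$ does not materialize, and the integrability condition $\int_0^\infty\langle t\rangle E_t\,dt<\infty$ in your differential inequality fails for this error.

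A secondary issue: the differential inequality you write, with a dissipative term $-\tfrac{\kappa}{\langle t\rangle}\langle\psi_t,\phi_t\psi_t\rangle$, is not what the Heisenberg derivative produces. The negative part of $D\Phi_{ts}$ controls the \emph{transition density} $f'(x_{ts})$, not the full observable $f(x_{ts})$; in the paper one simply drops this negative term and the $\langle t\rangle^{-1}$ rate for $\langle f(x_{tt})\rangle_t$ comes from integrating the $O(s^{-2})$ symbol error and the $O(s^{-1}w_t)$ pull-through error, together with a separate estimate on the $t=0$ boundary term. The construction of $g_+^\psi(H)$ and the fixed-point map $\Phi_g$ (requiring mapping properties of $U_t^\psi$ on weighted spaces $L^2_\gamma$) is precisely what makes the pull-through error integrable and is the main new ingredient you are missing.
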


\begin{remark}[On the decay rate]
The algebraic decay rate of $-1/2$ in \eqref{max-vel-est-nl0} can probably be improved.
The restriction is coming from the presence of the nonlinearity, as one can see by comparing with
the arbitrary decay rate obtained in the linear (time-independent) case in \cite{ArbPusSigSof}.
\end{remark}

\begin{remark}[Scaling]
In the quantum mechanical context, one would like to keep $\|\psi_0\|^2$, interpreted as the initial number of particles, at least of $O(1)$. To reflect this interpretation, one may phrase the main result of Theorem \ref{thm:max-vel-HE0} without assuming smallness on the data,
but assuming instead that the pair interaction potential is small.
In other words, we may let $\varepsilon = 1$ in the statement of Theorem \ref{thm:max-vel-HE0}
and change $v$ to $\varepsilon^2 v$ in the Hartree equation \eqref{HE}.
\end{remark}

\smallskip
By definition  \eqref{setSdef} below, 
 the set $S_{g, b}$  consists of $H^s$-functions `localized' in the position $x$ 
 and energy $H$  (essentially to $|x|\le b$ and $I$, respectively). 
  Estimate \eqref{max-vel-est-nl0} shows that (up to time-decaying tails), 
  a solution to the Hartree equation starting in the set
\[S:=\bigcup_{g\in C^\infty_0, b>0}S_{g, b}\] 
 stays inside the $c$-light cone of an initial condition. After \cite{SigSof}, we call such a result a
{\it maximal (propagation)  speed bound} (MSB).

The infimum of all $c$'s for which \eqref{max-vel-est-nl0} holds is called 
the {\it maximal speed of propagation}, $c_{\rm max}$. 
Theorem \ref{thm:max-vel-HE0} implies the bound $c_{\rm max}\le k_I$.

As was mentioned above, for the (linear) Schr\"odinger equation ($v=0$), 
such an estimate was found in \cite{SigSof}, improved in \cite{HeSk,Skib,ArbPusSigSof}.

The MSB is conceptually close to the celebrated  Lieb-Robinson bound in  Quantum Statistical Mechanics proved 
  in \cite{LR} and improved 
 by many authors and extended to various areas of quantum physics with many important applications  
  (see \cite{exp1, GEi, NachSim} for reviews and references).\footnote{The  
  Lieb-Robinson bound does not involve an explicit energy cut-off, 
  but there is an implicit one in setting up the problem on a lattice, rather than a continuous 
     space.}

For certain models of quantum many-body systems
  without the Hartree or Hartree-Fock approximation, the Lieb-Robinson and maximal velocity bounds were obtained in a number of papers, see \cite{ElMaNayakYao, GebNachReSims, MatKoNaka, FLS1, FLS2, Fossetal, NRSS, exp1, SHOE, WH, KS, YL} and references therein.

 Our approach follows that of  \cite{ArbPusSigSof} (originating in turn in 
  \cite{SigSof})  for the linear Schr\"o\-dinger equation 
 and is based on constructing nearly monotonic (for bounded energy intervals) 
  `propagation observables' 
  providing quantitative information about the quantum evolution in question.  
 One of our key contributions 
  is the extension of the quantum energy localization method, 
 which proved to be exceptionally effective in the linear context, to nonlinear problems. 
 
\begin{remark}[MSB-intuition]\label{rem: MSB-intuit} 
The MSB is determined by the kinetic energy term and the main effort is directed at controlling the influence of the effective time-dependent potential, due to the nonlinearity (time-dependent self-consistent potential). Such a control is subtle as, a priori, one cannot rule out that the time-dependent potential does not pump energy onto the systems leading to an acceleration.
\end{remark}

\begin{remark}[Finiteness of $k_I$] \label{rem:V-Del-bnd}
Since $V$ is bounded (in fact, $\Delta$-boundedness with a relative bound $<1$ (see \eqref{V-cond} below) suffices),
$\Delta$ is $H$-bounded and therefore $k_I$ in \eqref{kg} is finite.
\end{remark}

\begin{remark}[Sharpness of the MSB]\label{rem: kappa-sharp}
Under our assumptions, the bound $c_{\mathrm{max}} \leq k_I$ is sharp. To illustrate this, we consider the special case of $v=0$. In this case, we can take  $S_{g, b}=g(H) \chi(|x|/b) L^2(\R^d)$. Then, one can show (see e.g. \cite{Sig}) that solutions of \eqref{HE} with $v=0$ (the Schr\"odinger equation) with initial conditions in $S_{g, b}$ concentrate on the trajectories $x=p t+x_0$, where $p=-i \n$ is the quantum momentum operator and $x_0\in \R^d$, and consequently $g(H)\approx g(\frac12 |p|^2 + V(p t+x_0))\approx g(\frac12 |p|^2)$, as $t\ra \infty$.  Choosing $I$ to be a small interval around an energy $E >0$ and using that  
 $g(H)$ is supported in $I$, we see that the support of the solution $\psi_t$ expands at the rate $\sqrt E t$. On the other hand $k_I= {\||p|\chi_{I}(H)\|}\approx \sqrt E$. (Of course, one can simplify the arguments by taking $V=0$.)  
 \end{remark}

\begin{remark}[Dependence on number of particles]\label{rem: n-dep} 
The bound $k_I$ is of the same form as in the (linear) 
one-particle case and gives a similar magnitude for the speed as in that case. 
However, the constant (subsumed in the symbol $\ls$) on the r.h.s. of 
\eqref{max-vel-est-nl0} depends on the size of the pair potential $v$. (In the mean-field derivation of the HE, one considers  the pair potential $w$ in the many-body SE to be of the form $w=\frac1n v$, with $v=O(1)$ in the particle number $n$ (see \cite{BenPorSchl}). This $v$ enters the HE.)
\end{remark}

\smallskip
\paragraph{\it Open problems.} 

(a) Prove MSB \eqref{max-vel-est-nl0} with  the implicit constant 
on the r.h.s.  independent of the size of the pair potential $v$. 
  
(b) Prove the MSB \eqref{max-vel-est-nl0} for $n$-particle systems with $n$-independent bounds.

(c)  Prove the MSB \eqref{max-vel-est-nl0} for  the Hartree-von Neumann (`mixed state' Hartree) equation, see \cite{PDdft} for some results on the long-time dynamics of this equation. 

(d)  Investigate a possibility of an acceleration due to strong nonlinearities.

\medskip
The following proposition gives explicit restrictions on $V$ guaranteeing that 
Condition \eqref{Aas2'} holds.

\begin{prop}\label{prop:condV}
Assume that the positive and negative parts of the potential $V$, $V_+=\max(V,0)$, $V_-=\max(-V,0)$, satisfy
\begin{equation}\label{Aas2-1}
V_+(x)\le C \x^{-\al}, \quad V_-(x)\le\delta\x^{-\al}, \quad \al>2,
\end{equation}
for some $C>0$ and for $\delta>0$ small enough. Then the operator $H=-\frac12\Delta+V$ has no eigenvalues and $0$ is not a resonance of $H$.
\end{prop}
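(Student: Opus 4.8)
The plan is to split the claim into three pieces --- absence of positive eigenvalues, absence of nonpositive eigenvalues, and absence of a zero-energy resonance --- treated in turn; the smallness of $\delta$, i.e.\ the smallness of $V_-$ relative to the Hardy constant, will be used only in the last two. First note that $|V|\le V_++V_-\le(C+\delta)\langle x\rangle^{-\alpha}$, so $V$ is bounded and $|x|\,|V(x)|\ls\langle x\rangle^{1-\alpha}\to0$ as $|x|\to\infty$ since $\alpha>2>1$; in particular $V$ is short range. \emph{Positive eigenvalues.} Because $V$ is bounded and $|x|\,|V(x)|\to 0$, the operator $H=-\tfrac12\Delta+V$ has no positive eigenvalues by the classical absence-of-positive-eigenvalues results for short-range potentials (Kato's theorem; see also the Agmon--Kato--Kuroda theory). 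Neither smallness of $\delta$ nor any restriction on $C$ enters here.

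\emph{Nonpositive eigenvalues.} From $V_-(x)\le\delta\langle x\rangle^{-\alpha}\le\delta|x|^{-2}$ (using $\alpha\ge2$) and the Hardy inequality $\int_{\R^d}|x|^{-2}|\psi|^2\,dx\le\frac{4}{(d-2)^2}\|\nabla\psi\|^2$, valid for $d\ge3$, I would estimate, for $\psi$ in the form domain of $H$,
\[
\langle\psi,H\psi\rangle \ge \tfrac12\|\nabla\psi\|^2 - \langle\psi,V_-\psi\rangle \ge \Big(\tfrac12 - \tfrac{4\delta}{(d-2)^2}\Big)\|\nabla\psi\|^2 .
\]
For $\delta<(d-2)^2/8$ the prefactor is positive, so $H$ has no negative eigenvalue; moreover $Hu=0$ with $u\in L^2$ gives $u\in H^1$ by elliptic regularity, and then the bound forces $\nabla u=0$, hence $u\equiv0$. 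Thus $H$ has no nonpositive eigenvalue.

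\emph{Zero resonance.} In dimensions $d\ge5$ there is no zero resonance for any potential of the class considered, as recalled in the excerpt, so it remains to handle $d\in\{3,4\}$. Suppose $u\ne0$ solves $Hu=0$ distributionally with $u\in\langle x\rangle^{\nu}L^2\setminus L^2$ for some $\nu>\tfrac12$. Rewriting the equation as $(-\tfrac12\Delta+V_+)u=V_-u$ and bootstrapping through the Newtonian kernel $|x-y|^{2-d}$ of $(-\Delta)^{-1}$ --- with $\alpha>2$ providing enough integrability of $V_-u$ --- one recovers the classical resonance asymptotics $|u(x)|\ls\langle x\rangle^{2-d}$ and $|\nabla u(x)|\ls\langle x\rangle^{1-d}$, so that $\nabla u\in L^2(\R^d)$. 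Pairing $Hu=0$ with $\chi(|x|/R)^2\,\overline u$ and integrating by parts, the commutator term is $O(R^{2-d})\to0$ by this decay, so letting $R\to\infty$ gives $0=\tfrac12\|\nabla u\|^2+\langle u,Vu\rangle$; since $V_+\ge0$,
\[
0 \ge \tfrac12\|\nabla u\|^2 - \langle u,V_-u\rangle \ge \Big(\tfrac12 - \tfrac{4\delta}{(d-2)^2}\Big)\|\nabla u\|^2
\]
by Hardy once more. For $\delta<(d-2)^2/8$ this forces $\nabla u=0$, and since $|u(x)|\to0$ we conclude $u\equiv0$, a contradiction. Hence $0$ is not a resonance of $H$, which with the two previous parts completes the proof.

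\emph{Main obstacle.} The genuinely delicate point is the resonance analysis for $d=3,4$: the candidate function $u$ is not square-integrable, so the identity $\langle u,Hu\rangle=0$ has no a priori meaning and must be obtained via the spatial-cutoff regularization, which hinges on the pointwise decay rates of $u$ and $\nabla u$. Making those rates rigorous --- through the integral equation $u=(-\tfrac12\Delta+V_+)^{-1}(V_-u)$ and interior elliptic estimates --- is the technical heart. The reason it suffices to require $V_-$ small while allowing $V_+\le C\langle x\rangle^{-\alpha}$ with arbitrary $C$ is structural: $V_+\ge0$ gets absorbed into the positive operator $-\tfrac12\Delta+V_+$ and never competes with the Hardy lower bound.
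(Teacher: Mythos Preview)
Your argument is essentially correct, but it follows a genuinely different route from the paper's. For the absence of positive eigenvalues both you and the paper simply invoke the classical Kato/Agmon result. For nonpositive eigenvalues, you use the Hardy inequality to obtain the form lower bound $\langle\psi,H\psi\rangle\ge(\tfrac12-\tfrac{4\delta}{(d-2)^2})\|\nabla\psi\|^2$ directly; the paper instead proceeds in two stages via a Birman--Schwinger argument, first showing that $-\tfrac12\Delta+V_+$ has no nonpositive eigenvalue or zero resonance (here $V_+\ge0$ and $V_+^{1/2}(-\Delta)^{-1}V_+^{1/2}\ge0$ do the work), and then comparing resolvents $(-\tfrac12\Delta+V_++\lambda)^{-1}\le(-\tfrac12\Delta+\lambda)^{-1}$ together with the smallness of $\|\langle x\rangle^{-\alpha/2}(-\tfrac12\Delta+\lambda)^{-1}\langle x\rangle^{-\alpha/2}\|$ to force $V_-\phi=0$. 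Your variational route is shorter and more transparent for the eigenvalue part.

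The real divergence is in the resonance step for $d=3,4$. You need the pointwise bounds $|u|\lesssim\langle x\rangle^{2-d}$, $|\nabla u|\lesssim\langle x\rangle^{1-d}$ to justify the cutoff pairing and to place $\nabla u$ in $L^2$ so that Hardy applies; you correctly identify this as the technical heart, but it is left as a sketch (and note that your integral equation should really be written with the free Green's function, $u=2(-\Delta)^{-1}(-Vu)$, rather than with $(-\tfrac12\Delta+V_+)^{-1}$, whose kernel you do not know explicitly). The paper bypasses this entirely: it works only in weighted $L^2$, uses the limiting absorption bound $\|\langle x\rangle^{-\alpha/2}(-\tfrac12\Delta+V_+)^{-1}\langle x\rangle^{-\alpha/2}\|<\infty$ (obtained from the resolvent comparison as $\lambda\to0$), and again the Birman--Schwinger identity $V_-^{1/2}\phi=V_-^{1/2}(-\tfrac12\Delta+V_+)^{-1}V_-\phi$ to conclude $V_-\phi=0$ for $\delta$ small, hence $(-\tfrac12\Delta+V_+)\phi=0$, hence $\phi=0$. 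The paper's approach is cleaner here because no pointwise asymptotics are needed; your approach is more hands-on and, once the decay rates are established, gives a more quantitative picture of why the resonance is excluded.
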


Proposition \ref{prop:condV}, or a similar statement, might be known. 
For the convenience of the reader, we provide its proof 
in Appendix \ref{app:condV}.

Theorem \ref{thm:max-vel-HE0} and Proposition \ref{prop:condV} imply
\begin{cor}\label{cor:condV}
Assume that Conditions \eqref{v-cond}, \eqref{Aas1} and \eqref{Aas2-1} hold. 
 Then the conclusions of Theorem \ref{thm:max-vel-HE0} hold.
\end{cor}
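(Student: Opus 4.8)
The plan is immediate: it suffices to verify the hypotheses of Theorem~\ref{thm:max-vel-HE0} and then invoke it. Conditions \eqref{v-cond} and \eqref{Aas1} are assumed verbatim in the corollary, so the only point to settle is that \eqref{Aas2'} follows from \eqref{Aas2-1}. But this is exactly the content of Proposition~\ref{prop:condV}: under \eqref{Aas2-1} the operator $H=-\tfrac12\Delta+V$ has no eigenvalues at all --- in particular no nonpositive ones --- and $0$ is not a resonance of $H$, which is precisely \eqref{Aas2'}.

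First I would note that \eqref{Aas1} and \eqref{Aas2-1} are mutually consistent. Taking $\alpha=0$ in \eqref{Aas1} (admissible since $0\le\gamma+\lfloor\tfrac{d-1}{2}\rfloor$) gives $|V(x)|\lesssim\langle x\rangle^{-\sigma}$ with $\sigma\ge\tfrac32 d+3\ge\tfrac{15}{2}>2$, so the pointwise decay $\langle x\rangle^{-\alpha}$ with $\alpha>2$ required of $V_\pm$ in \eqref{Aas2-1} is automatically available from \eqref{Aas1}; the genuinely new requirement imposed by \eqref{Aas2-1} is merely the smallness of the negative part $V_-$. Consequently \eqref{v-cond}, \eqref{Aas1} and \eqref{Aas2'} all hold simultaneously, Theorem~\ref{thm:max-vel-HE0} applies, and its conclusions --- the existence of $\varepsilon>0$, of the set $S_{g,b}\subset\mathcal{B}^s(C\varepsilon)$ in one-to-one correspondence with $g(H)\chi(|x|/b)\mathcal{B}^s(\varepsilon)$, and the maximal speed estimate \eqref{max-vel-est-nl0} --- follow at once.

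No genuine difficulty arises at the level of the corollary itself; all the substance is located in Proposition~\ref{prop:condV} (proved in Appendix~\ref{app:condV}, presumably via a positivity/Agmon-type argument ruling out eigenvalues once $V_-$ is small, together with a direct zero-energy analysis) and in Theorem~\ref{thm:max-vel-HE0}. If one had to single out the delicate ingredient, it would be the absence-of-resonance claim of Proposition~\ref{prop:condV} in the low dimensions $d=3,4$, since for $d\ge5$ zero-energy resonances are excluded a priori, as already remarked after \eqref{Aas2'}.
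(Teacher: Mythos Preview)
Your proposal is correct and matches the paper's approach exactly: the paper simply states that Theorem~\ref{thm:max-vel-HE0} and Proposition~\ref{prop:condV} imply the corollary, without further argument. Your additional remarks on the consistency of \eqref{Aas1} with \eqref{Aas2-1} and on where the real work lies are accurate and helpful elaborations, but not required.
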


\begin{remark}[The case of pure power NLS]\label{rem:gNLS}
Our main result, Theorem \ref{thm:max-vel-HE0},   
can be extended also to the case of the {\it pure power NLS equation}
\begin{align}\label{NLS0}
i\frac{\partial}{\partial t} \psi_{t} = (-\frac{1}{2}\Delta + V) \psi_{t} + |\psi_{t}|^{2\sigma} \psi_{t},
 \qquad \|\psi_0\|_{H^s \cap L^{p'}(\R^d)} \leq \varepsilon,
\end{align}
see Remark \ref{rem:gNLS-pf} below for a  detailed discussion.
\end{remark}

\medskip
\noindent
{\it Organization of the paper}. 
In Section \ref{sec:global-exist}, we state  a standard
 global existence result   with time decay for small solutions. In Section \ref{sec:ICset-S}, we describe the class of admissible data, $S=\bigcup_{g\in C^\infty_0, b>0}S_{g, b}$, for our MPS bounds. The main result of this section is proven in  Section \ref{sec:propic}, after we state in Section \ref{sec:mapping} some preliminary estimates. 
    In Section \ref{sec:prf-MVB-thm}, we prove
Theorem \ref{thm:max-vel-HE0}. Technical proofs are collected in the appendices.

\medskip
\noindent
{\it Notation}.
As mentioned above, $L^q$ and $W^{s,q}$ denote the usual Lebesgue and Sobolev spaces, $L^{q,\infty}$ 
denotes the weak-$L^q$ space, and $W^{s,(q,\infty)}$ ($q>1$) stands for Lorentz-Sobolev space, 
\begin{equation}\label{LorentzSobSp}
W^{\gamma,(q,\infty)}=\big\{ f\in L^{q,\infty} \, , \, \langle \nabla\rangle^{\alpha} f \in L^{q,\infty}, 
  \, \alpha \le \gamma \big\}.
\end{equation}
To simplify the statements below,
we will also write, abusing notations in the case where $q=1$,
\begin{equation*}
L^{1,\infty}\equiv L^1, \qquad W^{s,(1,\infty)} \equiv W^{s,1}.
\end{equation*}

We let $\langle x \rangle := \sqrt{1+|x|^2}$ for $x \in \R^d$,
and denote by $L^2_\gamma := \langle x \rangle^{-\gamma} L^2$ the usual weighted $L^2$ space.

We use standard notation for norms, 
and will often let $\| \cdot \| = \| \cdot \|_{L^2}$. 
We will sometimes also use $\| \cdot \|$ to denote the operator norm when there is no risk of confusion.

As usual, $p'$ denotes the H\"older conjugate exponent of $p \in [1,\infty]$. 
The Lebesgue indices $q$ and $p$ appearing in the statements of the results 
(see for example \eqref{v-cond-glob} and \eqref{ic} in Theorem \ref{thm:HE-exist}) are related as follows:
\begin{equation}\label{pqp1}
\frac{1}{p}=\frac12-\frac{1}{2q}, \qquad \frac{1}{p}  + \frac{1}{p'}= 1. 
\end{equation}

We write $A \lesssim B$ if $A \leq C B$ for some absolute constant $C>0$ independent of $A$ and $B$.
We write 
$A\approx B$ if $A\lesssim B$ and $B\lesssim A$.

More notation will be introduced in the course of the proofs.

\medskip

\begin{small}
\noindent \textbf{Acknowledgements.} We are grateful to the anonymous referees for their many constructive remarks and suggestions.
In particular, Remarks 1.3, 1.4, 1.6 and a part of 1.7 and Open Problems (c) and (d) were inserted in response to questions and comments by the referees. 

The research of JA and IMS was supported in part by NSERC grant 7901. FP was supported in part by a start-up grant from the University of Toronto, and NSERC grant
RGPIN-2018-06487.
\end{small}

\smallskip
\section{Global solutions}\label{sec:global-exist}

We begin with a basic theorem on the existence of global small solutions with time decay. 

\begin{thm}\label{thm:HE-exist}
Let $1\le q  < d / 2$ and  $s > d/(2q)$.  
Assume that $V$ satisfies  Conditions \eqref{Aas1}, \eqref{Aas2'} and that 
\begin{align}\label{v-cond-glob}
v \in L^{q,\infty} \quad \text{if}\quad 1 < q < d/2 ,
  \qquad v \in L^{1} \quad\text{if}\quad  q =1.
\end{align}
Then, there exists $\bar{\varepsilon}>0$ such that, for any $\varepsilon \leq \bar{\varepsilon}$,
Eq. \eqref{HE} with an initial condition $\psi_0$ satisfying   
\begin{align}\label{ic} 
\psi_0 \in H^s \cap L^{p'}, \qquad \| \psi_0 \|_{H^s\cap L^{p'}}  
\le \varepsilon,
\end{align}
 where $p'$ is given by \eqref{pqp1}, i.e. $p'= \frac{2q}{q +1}$ for $q\ge 1$, 
  has a unique global solution 
$\psi_t\in C(\R, H^s(\R^d))$ and this solution satisfies, for all $t\in \R$,
\begin{align}
\label{L2-ests}
& \|\psi_t\|_{L^2} =  \|\psi_0\|_{L^2},  \\
\label{Hs-ests}
& \|\psi_t\|_{H^s} \lesssim  \|\psi_0\|_{H^s}, 
\\ \label{pq-ests} 
& \|\psi_t\|_{L^{p}} \lesssim \varepsilon \langle t \rangle^{-d/(2q)}.
\end{align}
\end{thm}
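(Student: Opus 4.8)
The plan is to prove Theorem~\ref{thm:HE-exist} by a standard contraction mapping / continuity argument in a suitable space that simultaneously captures the conservation of the $L^2$ norm, the boundedness of the $H^s$ norm, and the dispersive time-decay of the $L^p$ norm. First I would set up the Duhamel formulation
\[
\psi_t = e^{-itH}\psi_0 - i\int_0^t e^{-i(t-\tau)H}\big( (v*|\psi_\tau|^2)\psi_\tau \big)\,d\tau,
\]
and work on the complete metric space
\[
X_T = \Big\{ \psi \in C([-T,T],H^s) : \|\psi\|_{X_T} := \sup_{|t|\le T}\big( \|\psi_t\|_{H^s} + \langle t\rangle^{d/(2q)}\|\psi_t\|_{L^p} \big) \le 2C_0\varepsilon \Big\},
\]
equipped with, say, the $\sup_t (\|\cdot\|_{L^2} + \langle t\rangle^{d/(2q)}\|\cdot\|_{L^p})$ metric, where $C_0$ is the constant from the linear estimates. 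The point of the mixed norm is that, by the hypotheses \eqref{Aas1}--\eqref{Aas2'}, the propagator $e^{-itH}$ satisfies the same $L^{p'}\to L^p$ dispersive decay $\langle t\rangle^{-d/(2q)}$ as the free Schr\"odinger group (this is quoted in the excerpt, referenced to Yajima-type results), and is bounded on $H^s$; hence the linear term lands in $X_T$ with $\|e^{-itH}\psi_0\|_{X_T}\le C_0\|\psi_0\|_{H^s\cap L^{p'}}\le C_0\varepsilon$.

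The core of the argument is the nonlinear estimate on the Duhamel integral. I would estimate the $L^p$ component using the dispersive bound: for the contribution at time $t$,
\[
\Big\|\int_0^t e^{-i(t-\tau)H}\big((v*|\psi_\tau|^2)\psi_\tau\big)\,d\tau\Big\|_{L^p} \lesssim \int_0^t \langle t-\tau\rangle^{-d/(2q)} \big\| (v*|\psi_\tau|^2)\psi_\tau \big\|_{L^{p'}}\,d\tau.
\]
Now $\| (v*|\psi_\tau|^2)\psi_\tau \|_{L^{p'}} \le \| v*|\psi_\tau|^2 \|_{L^q}\|\psi_\tau\|_{L^2}$ by H\"older with $1/p' = 1/q + 1/2$ (consistent with \eqref{pqp1}), and by the weak Young / Hölder–Young inequality in Lorentz spaces $\| v*|\psi_\tau|^2 \|_{L^q} \lesssim \|v\|_{L^{q,\infty}}\||\psi_\tau|^2\|_{L^1} = \|v\|_{L^{q,\infty}}\|\psi_\tau\|_{L^2}^2$ when $1<q<d/2$ (the endpoint $q=1$ being plain Young with $v\in L^1$). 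So the integrand is controlled by $\|v\| \|\psi_\tau\|_{L^2}^3 = \|v\|\|\psi_0\|_{L^2}^3$ — here one already sees why a time-decay factor is needed: this crude bound is not integrable. To fix this I would interpolate: write $\|\psi_\tau\|_{L^2}^2$ in one of the factors but replace the remaining $\|\psi_\tau\|$ (or rather use $\||\psi_\tau|^2\|_{L^{q_1}}$ for an intermediate $q_1$) by a piece of the decaying $L^p$ norm, so that $\|(v*|\psi_\tau|^2)\psi_\tau\|_{L^{p'}}\lesssim \|v\| \|\psi_\tau\|_{L^2}^{\theta}\|\psi_\tau\|_{L^p}^{2-\theta}\lesssim \varepsilon^{3}\langle\tau\rangle^{-(2-\theta)d/(2q)}$ for a suitable $\theta\in[0,1]$; choosing the exponent so that $(2-\theta)d/(2q) > 1$ (which is where the restriction $s>d/(2q)$, hence effectively $d/(2q)$ not too small relative to $1$ after the nonlinearity doubles it, plays in — in dimension $d\ge 3$ with $q<d/2$ one checks $2\cdot d/(2q) = d/q > 2 > 1$, so in fact $\theta$ can be taken $0$ and the full $\|\psi_\tau\|_{L^p}^2$ works). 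Then the standard convolution lemma $\int_0^t \langle t-\tau\rangle^{-d/(2q)}\langle\tau\rangle^{-d/q}\,d\tau \lesssim \langle t\rangle^{-d/(2q)}$ (valid since $d/(2q)$ and $d/q$ both exceed... well since $d/q>1$) closes the $L^p$ estimate with a gain of $\varepsilon^2$. The $H^s$ estimate is handled similarly but without dispersion: bound $\|\psi_t\|_{H^s}\le C_0\varepsilon + C\int_0^t \|(v*|\psi_\tau|^2)\psi_\tau\|_{H^s}\,d\tau$, and estimate $\|(v*|\psi_\tau|^2)\psi_\tau\|_{H^s}$ by the algebra/Leibniz structure: distribute derivatives, bound $v*(\cdot)$ factors in $L^\infty$ (or $W^{s,q}$) via $\|v\|$ times $L^1$/$L^2$ norms of $\psi_\tau$, and keep one factor in $H^s$ and the others in $L^p\cap L^2$ so that the time integrand again decays like $\langle\tau\rangle^{-d/q}$, integrable. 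Finally the $L^2$ identity \eqref{L2-ests} is the standard mass conservation for \eqref{HE} (real $V$, even real $v$ giving a real convolution potential), obtained by pairing the equation with $\psi_t$ and taking imaginary parts; combined with the a priori $H^s$ bound it upgrades local to global existence. Uniqueness and continuity in $t$ are routine once the contraction is set up.

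The main obstacle I expect is the nonlinear $H^s$ estimate together with the bookkeeping needed to make every term in the Duhamel iteration decay integrably in time: one must carefully split the $s$ derivatives among the three factors $v*|\psi|^2$ (two copies of $\psi$) and the outer $\psi$, choosing at each term which factor carries the $H^s$ norm (no decay, but bounded) and ensuring the other factors are placed in spaces interpolating between $L^2$ (no decay) and $L^p$ (decay $\langle\tau\rangle^{-d/(2q)}$) so that the product decays faster than $\langle\tau\rangle^{-1}$ — this is exactly where the hypothesis $s>d/(2q)$ and the regularity/integrability assumption \eqref{v-cond-glob} on $v$ (which lets one control $\langle\nabla\rangle^\alpha(v*f)$ in $L^q$, $\alpha\le\gamma$, hence $v*f$ and its derivatives in $L^\infty$ by Sobolev embedding) are used. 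A secondary technical point is justifying the Lorentz-space Young inequality $\|v*f\|_{L^q}\lesssim \|v\|_{L^{q,\infty}}\|f\|_{L^1}$ and its weighted/Sobolev analogues at the correct endpoints; this is classical (O'Neil's inequality) but needs to be invoked cleanly. Modulo these, the scheme is the textbook small-data global-existence-with-decay argument, and I would present it as such, referring to the linear bounds \eqref{propagHsbound}, \eqref{eq:disp} quoted earlier for the dispersive and Sobolev mapping properties of $e^{-itH}$.
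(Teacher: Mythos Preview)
Your approach is the paper's: Duhamel, contraction/bootstrap in $H^s$ and time-weighted $L^p$, using the $L^{p'}\to L^p$ dispersive estimate for $e^{-itH}$ and Kato--Ponce for the $H^s$ bound. Two exponent corrections are needed, though. First, the H\"older split for $L^{p'}$ is $1/p'=1/q+1/p$ (not $1/q+1/2$), which places the last factor of $\psi_\tau$ in $L^p$ rather than $L^2$; then $\|(v*|\psi_\tau|^2)\psi_\tau\|_{L^{p'}}\le\|v*|\psi_\tau|^2\|_{L^q}\|\psi_\tau\|_{L^p}\lesssim\|v\|_{L^{q,\infty}}\|\psi_\tau\|_{L^2}^2\|\psi_\tau\|_{L^p}\lesssim\varepsilon^3\langle\tau\rangle^{-d/(2q)}$, and the convolution $\int_0^t\langle t-\tau\rangle^{-d/(2q)}\langle\tau\rangle^{-d/(2q)}d\tau\lesssim\langle t\rangle^{-d/(2q)}$ closes since $d/(2q)>1$---no interpolation is needed, and your interpolated bound as written is only quadratic in $\psi$ and does not follow from any H\"older/Young combination. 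Second, the $H^s$ estimate uses only $v\in L^{q,\infty}$ (Theorem~\ref{thm:HE-exist} assumes no derivatives on $v$): Kato--Ponce places all $s$ derivatives on $|\psi_\tau|^2$ via $1+1/(2q)=1/q+1/p_1$, $1/p_1=1/2+1/p$, giving $\|W_\tau^\psi\psi_\tau\|_{H^s}\lesssim\|v\|_{L^{q,\infty}}\|\psi_\tau\|_{L^p}^2\|\psi_\tau\|_{H^s}$, which decays like $\langle\tau\rangle^{-d/q}$ and is integrable.
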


Theorem \ref{thm:HE-exist} is proven using 
  standard energy and decay estimates. For $q=1$ ($v\in L^1$), it is proven in \cite{Dietze}. In Appendix \ref{sec:HE-exist}, 
we present a proof  for the full range of $q$'s.

\medskip
\section{Set of initial conditions and asymptotic energy cut-off}\label{sec:ICset-S}

In this section, we describe the set of initial conditions, 
$S=\bigcup_{g\in C^\infty_0, b>0}S_{g, b}$,
used in  Theorem \ref{thm:max-vel-HE0}. To do this, we introduce \textit{asymptotic energy cut-offs}.

Let  $f(|\psi|^2):=v*|\psi|^2$ and let $H_{t}^\psi$ be the $\psi_{t}$-dependent Schr\"odinger operator on $L^2(\R^d)$ defined as
\begin{align}\label{defH_t}
H_{t}^\psi := -\frac12 \Delta + V + f(|\psi_{t}|^2).
\end{align}
If $\psi_t \in L^p$, with $p$ as in Theorem \ref{thm:HE-exist}, 
by Young's inequality we have $f(|\psi_{t}|^2) \in L^\infty$.
Then, by Kato's result (see e.g. \cite{CFKS}), 
the operator $H_{t}^\psi$ is self-adjoint on the domain of $\Delta$.
We let $U_t^\psi\equiv U^\psi_{t, 0}$ be the propagator generated by $H_{t}^\psi$. 
Then a solution $\psi_t$ of \eqref{HE} satisfies the fixed point equation $\psi_t= U_t^\psi\psi_0$. 
We also denote
\begin{align}\label{defW_t}
W_t^\psi := f(|\psi_t|^2) = v* |\psi_t|^2, \qquad H_t^\psi=H+W_t^\psi.
\end{align}

For real-valued $g\in C_0^\infty(\R)$ 
we define the asymptotic energy cut-offs (cf. \cite{ArbPusSigSof, SigSof}) as the operator norm limit
\begin{align} \label{gtau}
g_+^\psi(H) :=  \lim_{\tau\ra \infty} (U_\tau^\psi)^{-1}g(H)U_\tau^\psi.
\end{align}
The next proposition will imply that this limit exists under our assumptions.

\begin{prop}
\label{prop:as-en-cutoff}
Let $W_t(x)=W(x, t)$ be a real, time-dependent bounded potential satisfying 
\begin{align}\label{Wt-cond0}
\int_0^\infty \|\p_x^\al W_t\|_{L^\infty} dt <\infty, \quad \text{where either}\quad
  \alpha=0\quad\text{or}\quad 1\le|\al|\le 2.
\end{align}
Let $U_t := U(t, 0)$ be the evolution generated by $H_t := H + W_t$. 
Then, for all $g\in C_0^\infty(\R;\R)$, the following operator-norm limit exists
\begin{align} \label{g+}
g_+(H) := \lim_{t\rightarrow \infty}  U_t^{-1}g(H)U_t.
\end{align}
\end{prop}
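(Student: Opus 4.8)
The plan is to show that the family of operators $A_t := U_t^{-1} g(H) U_t$ is Cauchy in operator norm as $t \to \infty$ by differentiating in $t$ and integrating the bound. First I would record that $g(H)$ maps $L^2$ into $\langle x\rangle^{-2}L^2$-type weighted spaces with good decay — more precisely, the assumptions \eqref{Aas1}--\eqref{Aas2'} ensure (via the standard $H$-smoothness / local decay machinery, or directly from a Helffer--Sj\"ostrand representation of $g(H)$ together with resolvent estimates $\|\langle x\rangle^{-\mu}(H-z)^{-1}\langle x\rangle^{-\mu}\|$) that $\langle x\rangle^{\mu} g(H) \langle x\rangle^{\mu}$ is bounded for suitable $\mu$, and similarly with derivatives $\nabla g(H)$. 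This is what lets us absorb the $\partial_x W_t$ factors below.

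Next, the key computation: for $s < t$,
\begin{align}\label{cauchy-diff}
A_t - A_s = \int_s^t \frac{d}{d\tau}\big( U_\tau^{-1} g(H) U_\tau \big)\, d\tau
 = i \int_s^t U_\tau^{-1} \big[ W_\tau, g(H) \big] U_\tau \, d\tau,
\end{align}
using $\partial_\tau U_\tau = -i H_\tau U_\tau$ and $H_\tau = H + W_\tau$ (the $H$ part commutes with $g(H)$). Since the $U_\tau$ are unitary, $\|A_t - A_s\| \le \int_s^t \| [W_\tau, g(H)] \|\, d\tau$, so it suffices to bound the commutator by an integrable function of $\tau$. For this I would use a commutator expansion (Helffer--Sj\"ostrand again, or an explicit Fourier/pseudodifferential expansion of $g(H)$): schematically,
\begin{align}\label{comm-exp}
[W_\tau, g(H)] = \sum_{1\le |\alpha| \le N} c_\alpha\, (\partial_x^\alpha W_\tau)\, B_{\alpha} + R_N,
\end{align}
where the $B_\alpha$ are $H$-dependent bounded operators (built from resolvents and powers of $\nabla$) and $R_N$ is a remainder that is also controlled by $\|\partial_x^\alpha W_\tau\|_{L^\infty}$ for $|\alpha|\le 2$; one only needs the first couple of terms because of the condition \eqref{Wt-cond0} which supplies exactly $\|\partial_x^\alpha W_\tau\|_{L^\infty}$, $1\le|\alpha|\le 2$ (and $\alpha=0$). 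Thus $\| [W_\tau, g(H)] \| \lesssim \sum_{1\le|\alpha|\le 2}\|\partial_x^\alpha W_\tau\|_{L^\infty}$, which is integrable on $[0,\infty)$ by hypothesis. Hence $\{A_t\}$ is Cauchy and $g_+(H) := \lim_{t\to\infty} A_t$ exists in operator norm.

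The main obstacle is the commutator estimate \eqref{comm-exp}: one must make the expansion rigorous for a general self-adjoint $H = -\frac12\Delta + V$ (not just $-\frac12\Delta$), controlling the remainder $R_N$ by norms of derivatives of $W_\tau$ of order at most $2$ — this is why the hypothesis \eqref{Wt-cond0} is stated with $|\alpha|\le 2$ rather than just $\alpha=0$. The cleanest route is an almost-analytic extension / Helffer--Sj\"ostrand formula $g(H) = \frac{1}{\pi}\int_{\mathbb C} \bar\partial \tilde g(z)\, (H-z)^{-1}\, dA(z)$, after which $[W_\tau, g(H)] = -\frac{1}{\pi}\int \bar\partial\tilde g(z)\,(H-z)^{-1}[W_\tau,H](H-z)^{-1}\,dA(z)$, with $[W_\tau,H] = \frac12[(\Delta W_\tau) + 2\nabla W_\tau\cdot\nabla]$ — manifestly a first-order operator with coefficients $\partial_x^\alpha W_\tau$, $|\alpha|\le 2$ — and one then iterates once more and uses $\|(H-z)^{-1}\| \le |\mathrm{Im}\, z|^{-1}$ together with the rapid decay of $\bar\partial\tilde g$ near the real axis to close the bound. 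A secondary (routine) point is that $W_\tau$ bounded and $\partial_x W_\tau$ bounded already make $[W_\tau, H]$ well-defined as a form on $\mathrm{Dom}(\langle\nabla\rangle)$, which is all that is needed.
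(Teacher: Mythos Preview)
Your approach is essentially the same as the paper's: differentiate $A_t=U_t^{-1}g(H)U_t$, reduce to bounding $\|[W_\tau,g(H)]\|$, and use the Helffer--Sj\"ostrand representation together with $[W_\tau,H]=\nabla W_\tau\cdot\nabla+\tfrac12\Delta W_\tau$ (this is exactly the paper's Lemma~\ref{lem:g-W-comm}). Two remarks. First, your opening paragraph about weighted spaces and $\langle x\rangle^\mu g(H)\langle x\rangle^\mu$ is irrelevant here and should be deleted; nothing in the argument requires it, and it suggests you are reaching for more machinery than the problem demands. Second, the paper handles the two cases in \eqref{Wt-cond0} separately: when $\alpha=0$ one just uses the trivial bound $\|[g(H),W_\tau]\|\le 2\|g(H)\|\,\|W_\tau\|_{L^\infty}$, and only in the case $1\le|\alpha|\le 2$ does one invoke the Helffer--Sj\"ostrand commutator estimate. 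You blur these together; it would be cleaner to split them. Also, no iteration is needed in the Helffer--Sj\"ostrand step: $[W_\tau,H]$ is $(H+c)^{1/2}$-bounded and $\|(H+c)^{1/2}(H-z)^{-1}\|\lesssim \langle\mathrm{Re}\,z\rangle^{1/2}/|\mathrm{Im}\,z|$, which together with the rapid vanishing of $\bar\partial\tilde g$ near the real axis closes the bound directly.
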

 
\begin{proof}
Define $g_t(H):=U_t^{-1}g(H)U_t$, 
and write $g_t(H)$ as the integral of the derivative and use that 
$\p_r g_r(H) = -i  U_r^{-1}[g(H), W_r]U_r$ to obtain
\begin{align} \label{gt}
g_t(H)=g(H) - i \int_0^t U_r^{-1} [g(H), W_r]U_rdr.
\end{align}
If \eqref{Wt-cond0} holds with $\alpha=0$, using the trivial estimate 
\begin{equation*}
\|[g(H),W_r]\| \lesssim \|g(H)\| \|W_r\|_{L^\infty} \lesssim \|W_r\|_{L^\infty},
\end{equation*} 
shows that \eqref{g+} exists.

If \eqref{Wt-cond0} holds with $1\le|\al|\le 2$, then we use Lemma \ref{lem:g-W-comm}, which shows that
\begin{equation*}
\big \| [g(H), W_r] \big \| \lesssim \max_{1\le|\alpha|\le2} \|\p_x^\al W_r \|_{L^\infty}.
\end{equation*}
Hence \eqref{g+} exists.
\end{proof}
\begin{remark}[Bound \eqref{Wt-cond0}]\label{rem: MVB-intuit} For either the Hartree case \eqref{defW_t} or the NLS one \eqref{NLS0Wt}, 
the bound \eqref{Wt-cond0} follows from the decay of $\|\psi_t \|_{L^p}$ at a sufficiently fast rate in $t$;  see for example Lemma \ref{lem:Wt}.
\end{remark} 
\begin{cor}[Existence of the asymptotic cutoff \eqref{gtau}]\label{rem:as-en-cutoff}
Under the conditions of Theorem \ref{thm:HE-exist}, if $\psi_t$ is a solution of \eqref{HE} 
then, for all $g\in C_0^\infty(\R;\R)$, the limit \eqref{g+} exists and
\begin{align}\label{g+expl}
g_+^\psi(H) = g(H) -i \int_0^\infty  \big(U_r^\psi\big)^{-1} [g(H), W_r^\psi] \, U^\psi_r \, dr .
\end{align}
\end{cor}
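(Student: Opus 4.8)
\textbf{Proof proposal for Corollary \ref{rem:as-en-cutoff}.}
The plan is to verify that a solution $\psi_t$ of \eqref{HE}, under the hypotheses of Theorem \ref{thm:HE-exist}, produces a self-consistent potential $W_t^\psi = v*|\psi_t|^2$ to which Proposition \ref{prop:as-en-cutoff} applies, and then to read off the explicit formula \eqref{g+expl} by passing to the limit $t\to\infty$ in the identity \eqref{gt}. The only real content is checking the integrability condition \eqref{Wt-cond0} for $W_t^\psi$.

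First I would record that $W_t^\psi$ is a real, bounded, time-dependent potential: $\psi_t \in L^2 \cap L^p$ with $\|\psi_t\|_{L^2}=\|\psi_0\|_{L^2}$ by \eqref{L2-ests} and $\|\psi_t\|_{L^p} \lesssim \varepsilon\langle t\rangle^{-d/(2q)}$ by \eqref{pq-ests}, so $|\psi_t|^2 \in L^1 \cap L^{p/2}$, and convolving with $v \in L^{q,\infty}$ (or $v\in L^1$ when $q=1$) against $|\psi_t|^2 \in L^{(p/2)} $, with $1/q + 2/p = 1 + 1$ i.e.\ the exponents arranged as in \eqref{pqp1}, Young's inequality in Lorentz spaces gives $W_t^\psi \in L^\infty$ with
\begin{align}\label{eq:Wdecay-sketch}
\|W_t^\psi\|_{L^\infty} \lesssim \|v\|_{L^{q,\infty}} \, \|\psi_t\|_{L^p}^2 \lesssim \varepsilon^2 \langle t\rangle^{-d/q}.
\end{align}
Since $q < d/2$ we have $d/q > 2 > 1$, so $\int_0^\infty \|W_t^\psi\|_{L^\infty}\,dt < \infty$, which is exactly \eqref{Wt-cond0} with $\alpha=0$. (Alternatively, under the stronger regularity \eqref{v-cond} on $v$ one gets the same bound for $\partial_x^\alpha W_t^\psi = (\partial_x^\alpha v)*|\psi_t|^2$ for $1\le|\alpha|\le 2$, which is the route invoked in Remark \ref{rem: MVB-intuit} and Lemma \ref{lem:Wt}; either version of \eqref{Wt-cond0} suffices.) I would point to Lemma \ref{lem:Wt} for the detailed computation rather than reproduce it here.

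With \eqref{Wt-cond0} in hand, Proposition \ref{prop:as-en-cutoff} applied to $H_t = H_t^\psi = H + W_t^\psi$ and $U_t = U_t^\psi$ yields the existence of the operator-norm limit $g_+^\psi(H) = \lim_{t\to\infty} (U_t^\psi)^{-1} g(H) U_t^\psi$ for every real $g \in C_0^\infty(\R)$; this is precisely the limit \eqref{gtau} defining the asymptotic energy cut-off. To obtain \eqref{g+expl}, I would take \eqref{gt} with $W_r = W_r^\psi$, $U_r = U_r^\psi$,
\begin{align}\label{eq:gt-psi-sketch}
g_t(H) = g(H) - i\int_0^t (U_r^\psi)^{-1}[g(H),W_r^\psi]U_r^\psi\,dr,
\end{align}
note that the integrand has norm $\lesssim \|[g(H),W_r^\psi]\| \lesssim \|W_r^\psi\|_{L^\infty} \lesssim \varepsilon^2\langle r\rangle^{-d/q}$ (using the crude commutator bound from the $\alpha=0$ case of the proof of Proposition \ref{prop:as-en-cutoff}), hence the integral converges absolutely on $[0,\infty)$; letting $t\to\infty$ and using that $g_t(H)\to g_+^\psi(H)$ in operator norm gives \eqref{g+expl}.

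The main obstacle — really the only point requiring care — is the decay estimate \eqref{eq:Wdecay-sketch}: one must confirm that the Lebesgue/Lorentz exponents line up so that Young's inequality applies and that the resulting time-decay rate $d/q$ is strictly larger than $1$, so that \eqref{Wt-cond0} holds. Once the relation \eqref{pqp1} between $p$ and $q$ and the restriction $q<d/2$ are used, this is immediate, and everything else is a direct invocation of Proposition \ref{prop:as-en-cutoff} and a dominated-convergence passage to the limit in \eqref{eq:gt-psi-sketch}.
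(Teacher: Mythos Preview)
Your proof is correct and follows essentially the same route as the paper: verify via Young's inequality that $\|W_t^\psi\|_{L^\infty}\lesssim\varepsilon^2\langle t\rangle^{-d/q}$ (the paper cites Lemma \ref{lem:Wt}), note that $q<d/2$ makes this integrable so that Proposition \ref{prop:as-en-cutoff} applies, and read off \eqref{g+expl} as the $t\to\infty$ limit of \eqref{gt}. One small slip: the exponent relation should read $1/q+2/p=1$ (not $1+1$), which is exactly the Young condition for an $L^\infty$ output; the conclusion \eqref{eq:Wdecay-sketch} is unaffected.
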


\begin{proof}
Under the conditions of Theorem \ref{thm:HE-exist}, Young's inequality implies that
\begin{align}\label{Wt-est}
&\| W_t^\psi\|_{L^\infty} \ls \bt^{-d/q},
\end{align}
see Lemma \ref{lem:Wt} below. Hence since $q<d$, Proposition \ref{prop:as-en-cutoff} implies the existence of the limit \eqref{gtau} and
the formula \eqref{g+expl}.
\end{proof}

To proceed, recall that $\mathcal{B}^s(\varepsilon)$ denotes the ball of radius $\varepsilon$ in $H^s$,
\begin{equation}\label{Bs}
\mathcal{B}^s(\varepsilon) := \big \{ f \in H^s, \, 
\|f\|_{H^s}<\varepsilon \big \}.
\end{equation}
Recall also that $L^2_\gamma:=\langle x \rangle^{-\gamma} L^2$ 
and let $\| f \|_{L^2_\gamma}:=\|\langle x\rangle^\gamma f\|_{L^2}$ be the corresponding norm. 
Note that if $\gamma > d/(2q)$, then $L^2_\gamma \subset L^{p'}$ 
where $p'$ is given by \eqref{pqp1}. For $\varepsilon>0$, 
 $\mathcal{B}_\gamma^s(\varepsilon)$ denotes the ball of radius $\varepsilon$ in $L^2_\gamma\cap H^s$:
\begin{equation}\label{Bsg}
\mathcal{B}_\gamma^s(\varepsilon) := \big \{ f \in L^2_\gamma\cap H^s, \, 
\|f\|_{L^2_\gamma}+\|f\|_{H^s}<\varepsilon \big \}.
\end{equation}
The next proposition is the main result of this section.

\begin{prop}\label{prop:ic-ext21} 
Let $1<q<d/2$, 
 $d/(2q)<\gamma<d/q-1$, and $\gamma\le s$.
Let $g \in C_0^\infty(\R;\R)$. Let $\varepsilon,\tilde{\varepsilon}>0$ 
be such that $C_0\varepsilon = \tilde{\varepsilon}\ll1$ with $C_0$ sufficiently large. 
Then assuming Conditions \eqref{v-cond}, \eqref{Aas1} and \eqref{Aas2'} on the potentials $V$ and $v$,

\setlength{\leftmargini}{2em}
\begin{itemize}

\item For all $\phi\in\mathcal{B}_\gamma^s(\varepsilon)$
there exists a unique $\psi_0\in\mathcal{B}_\gamma^s(\tilde{\varepsilon})$ solving the equation  
\begin{align}\label{ic-fp}\psi_0=g_+^{\Psi (\psi_0)} (H) \phi,\end{align}
where $g_+^\psi (H)$ is defined in \eqref{gtau} and $\Psi (\psi_0): t\ra $ the solution 
$\psi_t$ of \eqref{HE} with 
the datum $\psi_0$ (which exists globally since $\psi_0\in\mathcal{B}_\gamma^s(\tilde{\varepsilon}) 
\subset \tilde{\varepsilon}(H^s\cap L^{p'})$ and satisfies 
the properties in Theorem \ref{thm:HE-exist}).

\smallskip
\item The map $\Phi_g: \phi \mapsto \psi_0$ restricted to the domain 
$g(H)\mathcal{B}^s_\gamma(\varepsilon)$ is injective 
(note that $g(H)\mathcal{B}^s_\gamma(\varepsilon)\subset\mathcal{B}^s_\gamma(C\varepsilon)$ for some $C>0$).

\smallskip 
\end{itemize}

\end{prop}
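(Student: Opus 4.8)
The plan is to solve the fixed point equation \eqref{ic-fp} by a contraction mapping argument on the complete metric space $\mathcal{B}_\gamma^s(\tilde\varepsilon)$, using that the asymptotic cutoff $g_+^\psi(H)$ is a small perturbation of $g(H)$ when $\varepsilon$ is small. First, for a given $\phi\in\mathcal{B}_\gamma^s(\varepsilon)$ I would define the map $T_\phi(\psi_0):=g_+^{\Psi(\psi_0)}(H)\phi$ and show it maps $\mathcal{B}_\gamma^s(\tilde\varepsilon)$ into itself. The key ingredients are: (a) boundedness of $g(H)$ on both $H^s$ (from \eqref{Aas1}-\eqref{Aas2'}, which guarantee $e^{-itH}$ is bounded on $H^\gamma$ and hence $g(H)$ preserves $H^s$ for $s\ge\gamma$) and on $L^2_\gamma$ (from Yajima-type weighted bounds / the $H$-smoothness machinery underlying \eqref{Aas1}); and (b) the correction term in formula \eqref{g+expl}, namely $\int_0^\infty (U_r^\psi)^{-1}[g(H),W_r^\psi]U_r^\psi\,dr$, is bounded in operator norm on both $H^s$ and $L^2_\gamma$ by a quantity that is $O(\varepsilon^2)$. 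For the latter, I would invoke Corollary \ref{rem:as-en-cutoff} together with the decay \eqref{Wt-est} $\|W_t^\psi\|_{L^\infty}\lesssim \langle t\rangle^{-d/q}$ and the commutator bound $\|[g(H),W_r]\|\lesssim \max_{|\alpha|\le 2}\|\partial_x^\alpha W_r^\psi\|_{L^\infty}$ from Lemma \ref{lem:g-W-comm}; since $\|\psi_t\|_{L^p}\lesssim \varepsilon\langle t\rangle^{-d/(2q)}$ and $\|\psi_t\|_{H^s}\lesssim\varepsilon$, one gets $\|\partial_x^\alpha W_r^\psi\|_{L^\infty}\lesssim \varepsilon^2\langle r\rangle^{-d/q}$ (interpolating to control derivatives of $v*|\psi|^2$), which is integrable because $q<d$, yielding an $O(\varepsilon^2)$ bound. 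Thus $\|T_\phi(\psi_0)\|_{L^2_\gamma\cap H^s}\le \|g(H)\phi\|+O(\varepsilon^2)\le C\varepsilon+O(\varepsilon^2)<\tilde\varepsilon$ once $C_0$ is large and $\varepsilon$ small.

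Next, for the contraction property I would estimate $\|T_\phi(\psi_0^{(1)})-T_\phi(\psi_0^{(2)})\|_{L^2_\gamma\cap H^s}=\|(g_+^{\Psi(\psi_0^{(1)})}(H)-g_+^{\Psi(\psi_0^{(2)})}(H))\phi\|$. Using \eqref{g+expl}, the difference of the two operators is an integral involving the difference of the corresponding propagators $U_r^{\psi^{(1)}}$, $U_r^{\psi^{(2)}}$ and of the potentials $W_r^{\psi^{(1)}}$, $W_r^{\psi^{(2)}}$. Here the main work is a Lipschitz-type estimate: the difference $\psi_t^{(1)}-\psi_t^{(2)}$ of two Hartree solutions is controlled, in the relevant norms (including the decaying $L^p$ norm), by $\|\psi_0^{(1)}-\psi_0^{(2)}\|$ — this follows from a Duhamel/Gronwall argument in the same functional framework used to prove Theorem \ref{thm:HE-exist}. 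Feeding this back, and again using integrability in $r$ coming from $q<d$ together with the smallness $\varepsilon$, one obtains $\|T_\phi(\psi_0^{(1)})-T_\phi(\psi_0^{(2)})\|\le C\varepsilon \|\psi_0^{(1)}-\psi_0^{(2)}\|\le \frac12\|\psi_0^{(1)}-\psi_0^{(2)}\|$ for $\varepsilon$ small. The Banach fixed point theorem then gives the unique $\psi_0\in\mathcal{B}_\gamma^s(\tilde\varepsilon)$ solving \eqref{ic-fp}, proving the first bullet. The restriction $\gamma<d/q-1$ is exactly what makes the weighted bounds and the interpolation for $\partial_x^\alpha(v*|\psi|^2)$ close.

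For the second bullet, injectivity of $\Phi_g$ on $g(H)\mathcal{B}_\gamma^s(\varepsilon)$, I would argue as follows. Suppose $\phi_1,\phi_2\in g(H)\mathcal{B}_\gamma^s(\varepsilon)$ with $\Phi_g(\phi_1)=\Phi_g(\phi_2)=:\psi_0$. Writing $\phi_j=g(H)\eta_j$ and applying $g(H)$ to \eqref{ic-fp} and using idempotency-type identities (note $g(H)\phi_j=\phi_j$ if $g\equiv 1$ on $\mathrm{supp}$... more precisely one chooses $g$ so that $g^2=g$ on the relevant spectral window, or simply works with the pair $(g,\tilde g)$ as in \cite{ArbPusSigSof}), one sees that $\phi_1-\phi_2 = g(H)\big(g_+^{\Psi(\psi_0)}(H)^{-1}-\text{Id}\big)$ applied to... — more directly: from $\psi_0=g_+^{\Psi(\psi_0)}(H)\phi_j$ for $j=1,2$ with the \emph{same} $\psi_0$, we get $g_+^{\Psi(\psi_0)}(H)(\phi_1-\phi_2)=0$; since $g_+^{\Psi(\psi_0)}(H)=g(H)+O(\varepsilon^2)$ and $g(H)$ acts as (a close-to-)identity on $g(H)\mathcal{B}_\gamma^s(\varepsilon)$ via an auxiliary cutoff $\tilde g$ with $\tilde g g=g$, a Neumann-series/invertibility argument forces $\phi_1=\phi_2$. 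The inclusion $g(H)\mathcal{B}_\gamma^s(\varepsilon)\subset\mathcal{B}_\gamma^s(C\varepsilon)$ is immediate from boundedness of $g(H)$ on $L^2_\gamma\cap H^s$. The main obstacle I anticipate is item (b) together with the Lipschitz estimate on Hartree solutions: one must carefully track derivatives of the self-consistent potential $W_t^\psi=v*|\psi_t|^2$ in $L^\infty$, controlling up to two spatial derivatives by a product of a decaying $L^p$-type norm and a bounded (in time) high-regularity norm of $\psi_t$, and confirm that the time integral converges — this is precisely where the regularity assumption \eqref{v-cond} on $v$ (enough derivatives in a Lorentz space) and the dispersive decay \eqref{pq-ests} are both used in an essential way.
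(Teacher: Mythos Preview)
Your overall strategy---show that $F_\phi(\psi_0)=g_+^{\Psi(\psi_0)}(H)\phi$ is a contraction on $\mathcal{B}^s_\gamma(\tilde\varepsilon)$---is the paper's. The genuine gap is in step~(b), the $L^2_\gamma$ bound on the correction integral. You justify boundedness on $L^2_\gamma$ by the $L^2\to L^2$ commutator estimate $\|[g(H),W_r^\psi]\|\lesssim\varepsilon^2\langle r\rangle^{-d/q}$ and its integrability. But that controls only the unweighted norm: the propagator $U_r^\psi$ is \emph{not} uniformly bounded on $L^2_\gamma$. The relevant estimate is Lemma~\ref{lemL2gamma},
\[
\|U_r^\psi\varphi\|_{L^2_\gamma}\lesssim\langle r\rangle^\gamma\|\varphi\|_{H^\gamma}+\|\varphi\|_{L^2_\gamma},
\]
so the factors $(U_r^\psi)^{\pm1}$ introduce $\langle r\rangle^\gamma$ growth and force you to carry the $H^\gamma$ norm alongside the weighted one. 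The paper then couples this growth with the faster decay $\|W_r^\psi\|_{H^\gamma\to H^\gamma}\lesssim\varepsilon^2\langle r\rangle^{-d/q}$ (Lemma~\ref{lem:Wt2}, which uses the full regularity \eqref{v-cond}, not just $v\in L^{q,\infty}$), and integrability of $\langle r\rangle^{\gamma-d/q}$ is exactly the condition $\gamma<d/q-1$. That constraint is about compensating propagator growth in the weighted space, not about ``interpolation for $\partial_x^\alpha(v*|\psi|^2)$'' as you wrote. The same issue reappears in the contraction step, where the paper needs the weighted Lipschitz bound of Lemma~\ref{lemL2diff}, not merely an $L^2$ operator-norm difference of flows.

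Your injectivity argument also differs from the paper's and has its own gap. From $g_+^{\Psi(\psi_0)}(H)(\phi_1-\phi_2)=0$ you propose to invert $g_+^\psi(H)=g(H)+O(\varepsilon^2)$ on $\mathrm{Ran}\,g(H)$ by Neumann series; but $g(H)$ (with $g\in C_0^\infty$, hence not a characteristic function) is not boundedly invertible on its range, so the series does not set up. The paper instead derives a lower bound $\|\psi_{0,1}-\psi_{0,2}\|_{H^s}\ge\|g(H)(\phi_1-\phi_2)\|_{H^s}-C\varepsilon^2[\cdots]$, allowing the two underlying $\psi$'s to differ rather than assuming a common $\psi_0$.
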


Eq. \eqref{ic-fp} is a fixed point problem depending on $\phi$ (or an implicit function equation). 
We prove Proposition \ref{prop:ic-ext21} in Section \ref{sec:propic}.

\begin{definition}[Set $S_{g, b}$]\label{def:setS}
Let $\Phi_g$ be the injective maps defined in the second statement of Proposition 
\ref{prop:ic-ext21}. For $g\in C^\infty_0(\R;\R)$ and $b>0$, we define the set $S_{g, b}$ 
of initial conditions, $\psi_0$, appearing in Theorem \ref{thm:max-vel-HE0}, Eq. \eqref{subset_S}, as
\begin{align}\label{setSdef}
S_{g, b}: = 
 \Phi_g\big(g(H)\chi(|x|/b)\mathcal{B}^s(\varepsilon)\big).\end{align}
\end{definition}

Proposition \ref{prop:ic-ext21} shows that the class of initial data $S_{g, b}$  
is in one-to-one correspondence with the set 
  $g(H) \chi(|x|/b) \mathcal{B}^s(\varepsilon)$ 
(note that $\chi(|x|/b) \mathcal{B}^s(\varepsilon)\subset \mathcal{B}_\gamma^s(C\varepsilon)$ for some $C>0$).
 Proposition \ref{prop:ic-ext21} and  Theorem \ref{thm:HE-exist} imply that
 
\begin{cor}\label{cor:setS} 
Under the conditions of Proposition \ref{prop:ic-ext21}, 
the set $S_{g, b}$ of initial conditions, $\psi_0$, constructed in \eqref{setSdef},
has properties (i) and the first part of (ii) (that for any initial condition $\psi_0 \in S_{g, b}$,  
the Hartree equation \eqref{HE} has a global solution in $H^{s}$) of Theorem \ref{thm:max-vel-HE0}.  
\end{cor}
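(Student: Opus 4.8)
The plan is to obtain Corollary~\ref{cor:setS} as a direct consequence of Definition~\ref{def:setS}, Proposition~\ref{prop:ic-ext21}, and Theorem~\ref{thm:HE-exist}, essentially by unwinding the construction of $S_{g, b}$ and keeping track of the sizes of the balls involved. The only preliminary point is to locate the set $g(H)\chi(|x|/b)\mathcal{B}^s(\varepsilon)$ inside a domain to which Proposition~\ref{prop:ic-ext21} applies. Since $\chi(|x|/b)$ is smooth and supported in $\{|x|\le b\}$, multiplication by it is bounded on $H^s$, and, because $\langle x\rangle^\gamma\chi(|x|/b)\in L^\infty$, it also maps $H^s$ into $L^2_\gamma$; hence $\chi(|x|/b)\mathcal{B}^s(\varepsilon)\subset\mathcal{B}^s_\gamma(C_b\varepsilon)$ for some $C_b>0$ (as already noted just before the statement of the corollary). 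Choosing $\varepsilon>0$ small enough that $C_b\varepsilon$ falls in the admissible range of Proposition~\ref{prop:ic-ext21} (i.e. $C_0C_b\varepsilon\ll1$), the map $\Phi_g$ is defined on $\mathcal{B}^s_\gamma(C_b\varepsilon)$ and, by the second bullet of that proposition applied with parameter $C_b\varepsilon$, is injective on $g(H)\mathcal{B}^s_\gamma(C_b\varepsilon)\supset g(H)\chi(|x|/b)\mathcal{B}^s(\varepsilon)$.

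For property (i), it then suffices to recall from Definition~\ref{def:setS} that $S_{g, b}=\Phi_g\big(g(H)\chi(|x|/b)\mathcal{B}^s(\varepsilon)\big)$, and that $\Phi_g$ restricted to $g(H)\chi(|x|/b)\mathcal{B}^s(\varepsilon)$ is injective by the previous paragraph; hence $\Phi_g$ is a bijection from $g(H)\chi(|x|/b)\mathcal{B}^s(\varepsilon)$ onto $S_{g, b}$, which is precisely the asserted one-to-one correspondence. Moreover, by the first bullet of Proposition~\ref{prop:ic-ext21} every $\psi_0=\Phi_g(\phi)\in S_{g, b}$ lies in $\mathcal{B}^s_\gamma(\tilde\varepsilon)$ with $\tilde\varepsilon=C_0C_b\varepsilon\lesssim\varepsilon$, so in particular $S_{g, b}\subset\mathcal{B}^s(C\varepsilon)$, which is the inclusion \eqref{subset_S}.

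For the first part of (ii), fix $\psi_0\in S_{g, b}$. By construction $\psi_0$ solves the fixed-point equation \eqref{ic-fp} and belongs to $\mathcal{B}^s_\gamma(\tilde\varepsilon)$. Since $\gamma>d/(2q)$ under the hypotheses of Proposition~\ref{prop:ic-ext21}, one has the embedding $L^2_\gamma\hookrightarrow L^{p'}$ with $p'$ as in \eqref{pqp1}, so $\psi_0\in H^s\cap L^{p'}$ with $\|\psi_0\|_{H^s\cap L^{p'}}\lesssim\tilde\varepsilon$; shrinking $\varepsilon$ further if necessary so that this norm is $\le\bar\varepsilon$, Theorem~\ref{thm:HE-exist} provides a unique global solution $\psi_t\in C(\R,H^s(\R^d))$ of \eqref{HE} (which moreover satisfies \eqref{L2-ests}--\eqref{pq-ests}). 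This is exactly the first assertion of (ii) in Theorem~\ref{thm:max-vel-HE0}, and completes the proof.

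I do not expect a genuine obstacle here: the substance of the corollary is already contained in Proposition~\ref{prop:ic-ext21} (the fixed-point construction and its injectivity on $g(H)\mathcal{B}^s_\gamma$) and in Theorem~\ref{thm:HE-exist} (global well-posedness and decay for small data). The only thing requiring a little care is the bookkeeping of the constants $C_b$, $C_0$ and the choice of $\varepsilon$ small enough that all the smallness requirements of the two cited results hold simultaneously; everything else is a matter of composing the inclusions already recorded in the text.
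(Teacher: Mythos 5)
Your proof is correct and follows essentially the same route as the paper, which dispatches the corollary in one line (``Proposition \ref{prop:ic-ext21} and Theorem \ref{thm:HE-exist} imply that\dots'') after noting the inclusion $\chi(|x|/b)\mathcal{B}^s(\varepsilon)\subset\mathcal{B}^s_\gamma(C\varepsilon)$, exactly as you do. The bookkeeping of constants and the use of the embedding $L^2_\gamma\hookrightarrow L^{p'}$ to place $\psi_0$ in the small-data hypothesis of Theorem \ref{thm:HE-exist} are precisely what the authors leave implicit, and you have made them explicit without altering the argument.
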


\medskip
\section{Proof of Theorem \ref{thm:max-vel-HE0}}\label{sec:prf-MVB-thm}

We will use  the following result about the linear propagators proven in  \cite{ArbPusSigSof} 
 under stronger assumptions.

\begin{thm}[Maximal propagation speed for $t$-dependent potentials]\label{thm:max-vel-x-Ht}
Suppose that $H_t = -\frac12\Delta +V+ W_t$, with  $V(x)$ satisfying the inequality 
\begin{align}\label{V-cond} &  
 \|Vu\|  \le \frac{a_1}{2} \|\Delta u\|+ a_2\|u\|,  
 \end{align} 
for some $0 \le a_1 <1, \, a_2>0$,  
and $W_t(x)$, a real, time-dependent, bounded potential such that 
\begin{align}\label{eq:integr_wt}  
\begin{split}
&\text{either} \quad\int_0^\infty w_t \, dt < \infty \quad 
 \text{or}  \quad \int_0^\infty w_t' \, dt< \infty,
 \\
& \text{where}  \quad w_t :=\int_t^{\infty} \|W_r\|_{W^{1,\infty}} dr  \quad  \text{and}  \quad w_t' := 
\max_{1\le|\alpha|\le2} \int_t^{\infty} \|\p_x^\al W_r\|_{L^\infty} dr.
\end{split}
\end{align}

Let $I$ be a bounded open interval, $g\in C_0^\infty(I;\R)$ and let $k_I$ be as in \eqref{kg}.
If $c> k_I$ and $a>b$, then, for all $0<\beta<1$,  the evolution  $U_t = U(t, 0)$ 
generated by $H_t$ satisfies the estimate \begin{align} \label{max-vel-est-Ht}
\|\chi_{\{|x| \geq ct+a \}} \, U_t g_+(H) \, \chi_{\{|x| \leq b\}} \| \ls \,t^{-\min(\frac12,1-\beta)} + w_{t^\beta}^\sharp 
\end{align}
for $t\ge1$.
Here $w_{t^\beta}^\sharp$ is either $w_{t^\beta}$ or $w_{t^\beta}'$, depending on the condition in \eqref{eq:integr_wt}.
\end{thm}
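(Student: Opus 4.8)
The plan is to reduce the maximal velocity bound for the time-dependent Schr\"odinger propagator $U_t$ to a differential inequality (``propagation estimate'') for a suitably chosen family of propagation observables, in the spirit of Sigal--Soffer and of \cite{ArbPusSigSof}. First I would introduce a smooth, monotone phase-space function $\phi_s(x,t) = f\big( (|x|-vt-a)/t^\beta \big)$ interpolating between $0$ for $|x|\lesssim vt+a$ and $1$ for $|x|\gtrsim 2(vt+a)$, with $k_I < v < c$, and form the (time-dependent) observable $\Phi_t := U_t^* \, g_+(H)^* \, \phi_s(x,t)^2 \, g_+(H) \, U_t$ acting between the localized initial states. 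The quantity to control is $\langle \psi, \Phi_t \psi\rangle$ with $\psi = \chi_{\{|x|\le b\}}\varphi$; the left side of \eqref{max-vel-est-Ht} squared is bounded by this plus lower-order terms, so it suffices to show $\langle\psi,\Phi_t\psi\rangle \lesssim t^{-\min(1,2-2\beta)} + (w_{t^\beta}^\sharp)^2$.

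The second step is the Heisenberg derivative computation: $\tfrac{d}{dt}\langle\psi,\Phi_t\psi\rangle = \langle\psi, U_t^* D\phi_s^2 \, U_t \psi\rangle$ where $D = \partial_t + i[H_t,\cdot]$ is applied through the conjugation by $g_+(H)$. The commutator $i[H_t,\phi_s^2] = i[-\tfrac12\Delta,\phi_s^2] + i[W_t,\phi_s^2]$ splits into a ``good'' kinetic part and an ``error'' potential part. For the kinetic part, the key point is that on the range of the energy cutoff the momentum is bounded by $k_I < v$, so $i[-\tfrac12\Delta,\phi_s^2]$ combined with $\partial_t\phi_s^2$ (which contributes $-v \phi_s'\phi_s / t^\beta$-type terms, carrying the ``drift'' $-v$) is, modulo a commutator-expansion remainder of size $O(t^{-2\beta})$ or $O(t^{-1})$, a negative operator up to a term supported where $\phi_s' \ne 0$ and there controlled by $(v - k_I)>0$ times something nonnegative — i.e. the standard ``maximal velocity'' sign. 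Here I would use Lemma~\ref{lem:g-W-comm}-type bounds and the fact, quoted after \eqref{Aas2'}, that $e^{-itH}$ satisfies the same dispersive and $H^\gamma$-boundedness estimates as the free propagator, to handle the energy cutoff $g_+(H)$ and its near-commutation with $\phi_s$; the replacement of $g(H)$ by $g_+(H)$ costs an integrable-in-time error by Proposition~\ref{prop:as-en-cutoff} / Corollary~\ref{rem:as-en-cutoff}. The error part $i[W_t,\phi_s^2]$ is bounded in norm by $\lesssim \|\nabla W_t\|_{L^\infty} \|\phi_s'\|_{L^\infty} \lesssim t^{-\beta}\|W_t\|_{W^{1,\infty}}$ (or, via the second option in \eqref{eq:integr_wt}, by $\max_{1\le|\alpha|\le2}\|\partial_x^\alpha W_t\|_{L^\infty}$ after an extra commutator expansion), which is the source of the $w_{t^\beta}^\sharp$ term.

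The third step is to integrate the differential inequality from $t^\beta$ (or a fixed large time) to $t$: the nonnegative ``boundary'' contribution from the region $\phi_s'\ne 0$ is integrable because there $|x|\sim vt$ while $U_r g_+(H)\psi$ is essentially supported in $|x|\lesssim k_I r + b < vr$, with the mismatch giving a gain of $t^{-\infty}$ or at worst the stated algebraic rate; the $\partial_t\phi_s$ term contributes the main algebraic decay $t^{-\min(1,2-2\beta)}$ after optimizing, and the potential error integrates to $w_{t^\beta}^\sharp$ (resp. its square) by \eqref{eq:integr_wt}. Assembling these gives \eqref{max-vel-est-Ht} after taking square roots. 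I expect the main obstacle to be the careful bookkeeping of the commutator $[g_+(H),\phi_s]$ and the pseudodifferential remainder terms in expanding $[-\tfrac12\Delta,\phi_s^2]$: one must verify that all the remainders are genuinely lower order in $t$ uniformly in the energy window, using only the weighted resolvent/dispersive bounds available under \eqref{Aas1}--\eqref{Aas2'}, rather than the smoothness of $W_t$ (which is not assumed, only its decay). Since this theorem is quoted from \cite{ArbPusSigSof} ``under stronger assumptions,'' the actual work here is to check that the weaker hypotheses \eqref{V-cond} and \eqref{eq:integr_wt} still suffice to run the same argument — so the proof will largely consist of pointing to \cite{ArbPusSigSof} and indicating which estimates must be upgraded, rather than redoing the whole construction.
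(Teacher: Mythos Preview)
Your overall architecture --- propagation observable, Heisenberg derivative, exploitation of $k_I < v$ on the energy-cut-off range --- matches the paper. But there is a genuine conceptual gap in where the time-dependent potential $W_t$ enters.

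You write $i[H_t,\phi_s^2] = i[-\tfrac12\Delta,\phi_s^2] + i[W_t,\phi_s^2]$ and treat the second term as the source of the $w^\sharp_{t^\beta}$ error. This is wrong: $\phi_s$ is a multiplication operator in $x$ and so is $W_t$, hence $[W_t,\phi_s^2]=0$ (and likewise $[V,\phi_s^2]=0$). The Heisenberg derivative of the observable involves only the kinetic commutator. In the paper's proof the $W_t$-dependence enters \emph{only} through the pull-through relation $U_t\,g_+(H) = g(H)\,U_t + O(w_t)$ (their \eqref{PTR}), which is what Proposition~\ref{prop:as-en-cutoff} gives. One replaces $\psi_t = U_t g_+(H)\phi_0$ by $g(H)U_t\phi_0$ at cost $O(w_t)$; after integrating $\int_0^t s^{-1}w_r\,dr$ this is absorbed into the $s^{-1}$ term. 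The specific contribution $w^\sharp_{t^\beta}$ arises \emph{not} during the Heisenberg evolution but in estimating the \emph{initial} value $\langle f(x_{0s})\rangle_0$: one passes from $g_+(H)$ to $g_{s^\beta}(H):=U_{s^\beta}^{-1}g(H)U_{s^\beta}$ at cost $O(w_{s^\beta})$, and then a separate localization lemma (their Lemma~\ref{lem:chi-g+-est}) shows $\chi(x_{0s})g_{s^\beta}(H)\chi_b^- = O(s^{\beta-1})$.

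This also means your placement of $\beta$ is off. In the paper the observable is scaled by a fixed parameter $s$, namely $x_{ts}=s^{-1}(\langle x\rangle - a - vt)$, and one integrates the differential inequality over $[0,t]$ and only then sets $s=t$; the parameter $\beta$ never appears in the observable and enters solely through the initial-time argument just described. Your choice of scale $t^\beta$ inside the observable and integration from $t^\beta$ to $t$ is a different bookkeeping that would not directly produce the stated exponent $\min(\tfrac12,1-\beta)$. The fix is to follow the paper's two-parameter scheme: run the monotonicity argument with scale $s$, then handle the $t=0$ term by the $g_+\to g_{s^\beta}$ trick.
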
 
A proof of Theorem \ref{thm:max-vel-x-Ht} is given in Appendix \ref{app:max-vel_Ht}. 
Note that in our context, it is preferable to use the condition 
$\int_0^\infty\int_t^\infty\|W_r^\psi\|_{W^{1,\infty}}drdt<\infty$ since it requires 
less regularity on $v$ than the condition 
$\int_0^\infty\int_t^\infty\|\partial_\alpha W_r^\psi\|_{L^{\infty}}drdt<\infty$, $1\le|\alpha|\le2$.

\begin{proof}[Proof of Theorem \ref{thm:max-vel-HE0}]
Let $\psi_t\in C(\R, H^s(\R^d))$ be the unique global solution of Eq. \eqref{HE} with  
an initial condition $\psi_0$ in the set $S_{g, b}$ (see \eqref{setSdef}), given in Theorem \ref{thm:HE-exist}. 
Theorem \ref{thm:max-vel-HE0}  follows from Theorem \ref{thm:max-vel-x-Ht} by letting (see \eqref{defW_t})
\begin{align}
W_t \mapsto W_t^\psi := f(|\psi_t|^2), \qquad U_t \mapsto U_t^\psi, 
  \qquad \beta = 1/2,
\end{align}
provided we verify Condition \eqref{eq:integr_wt}, that $w_{t^\beta} \ls \langle t\rangle^{-1/2}$,
and Proposition \ref{prop:ic-ext21} which shows that the class of initial data $S_{g, b}$  
is in one-to-one correspondence with the set   $g(H) \chi(|x|/b) \mathcal{B}^s(\varepsilon)\subset g(H) \mathcal{B}_\gamma^s(C\varepsilon)$ for some $C>0$ (see Corollary \ref{cor:setS}). 

For Condition \eqref{eq:integr_wt}, we need 
the following direct consequence of Theorem \ref{thm:HE-exist}:
 
\begin{lemma}\label{lem:Wt}
Under the conditions of Theorem \ref{thm:HE-exist}, we have that
\begin{equation}\label{eq:Wt-decay}
\big\|W_t^\psi\big\|_{L^\infty} \lesssim\varepsilon^2\langle t \rangle^{-\frac{d}{q}}.
\end{equation}
Suppose in addition that $v$ satisfies \eqref{v-cond}. Then 
\begin{equation}\label{eq:Wt-decay2}
 \big\|W_t^\psi\big\|_{W^{\gamma,\infty}}\lesssim\varepsilon^2\langle t\rangle^{-\frac{d}{q}}.
\end{equation}
\end{lemma}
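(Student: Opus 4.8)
The plan is to obtain both estimates directly from the $L^p$-decay statement \eqref{pq-ests} of Theorem \ref{thm:HE-exist} together with Young's convolution inequality, the only extra input being a fractional Leibniz/commutator argument to pass derivatives onto $v$ in the second bound. First I would prove \eqref{eq:Wt-decay}: writing $W_t^\psi = v * |\psi_t|^2$ and recalling from \eqref{pqp1} that $\tfrac1p = \tfrac12 - \tfrac1{2q}$, one has $\tfrac{1}{p/2} = 1 - \tfrac1q = 1 - \tfrac1{q'}$ (interpreting $q'=\infty$ when $q=1$), so that $1 + \tfrac1\infty = \tfrac1{p/2} + \tfrac1{q}$ in the weak-Young scale. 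Thus by Young's inequality in Lorentz spaces (or plain Young's when $q=1$),
\begin{align*}
\|W_t^\psi\|_{L^\infty} = \| v * |\psi_t|^2 \|_{L^\infty} \lesssim \|v\|_{L^{q,\infty}} \, \| |\psi_t|^2 \|_{L^{p/2}} = \|v\|_{L^{q,\infty}} \, \|\psi_t\|_{L^p}^2 \lesssim \varepsilon^2 \langle t\rangle^{-d/q},
\end{align*}
using \eqref{pq-ests} in the last step and $\|v\|_{L^{q,\infty}} \lesssim \|v\|_{W^{\gamma,(q,\infty)}} < \infty$ from \eqref{v-cond}. Note this only needs the integrability half of \eqref{v-cond}, i.e. the hypotheses of Theorem \ref{thm:HE-exist}, consistent with the statement.

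For \eqref{eq:Wt-decay2} I would move the derivatives onto $v$ rather than onto $\psi_t$, which is the reason the full regularity \eqref{v-cond} is invoked. For a multi-index $\alpha$ with $|\alpha| \le \gamma$ one has $\partial^\alpha(v * |\psi_t|^2) = (\partial^\alpha v) * |\psi_t|^2$ when $\gamma$ is an integer, and more generally $\langle\nabla\rangle^\gamma (v * |\psi_t|^2) = (\langle\nabla\rangle^\gamma v) * |\psi_t|^2$ by the Fourier multiplier property of $\langle\nabla\rangle^\gamma$. Hence the same Young estimate as above gives
\begin{align*}
\|W_t^\psi\|_{W^{\gamma,\infty}} \lesssim \big\| \langle\nabla\rangle^\gamma v \big\|_{L^{q,\infty}} \, \|\psi_t\|_{L^p}^2 \lesssim \|v\|_{W^{\gamma,(q,\infty)}} \, \varepsilon^2 \langle t\rangle^{-d/q},
\end{align*}
and in the case $1<q<d/2$ the hypothesis $\gamma > d/(2q)$ ensures $s \ge \gamma > d/(2q)$ so Theorem \ref{thm:HE-exist} applies with this $q$; in the case $v\in W^{\gamma,1}$ one uses ordinary Young's inequality $\|(\langle\nabla\rangle^\gamma v)*|\psi_t|^2\|_{L^\infty} \le \|\langle\nabla\rangle^\gamma v\|_{L^1}\||\psi_t|^2\|_{L^\infty}$ — but here one must instead interpolate, taking $v\in W^{\gamma,1}$, $|\psi_t|^2 \in L^\infty$ which costs $\|\psi_t\|_{L^\infty}^2$; since $s>d/2$, $H^s \hookrightarrow L^\infty$ and $\|\psi_t\|_{L^\infty}\lesssim \|\psi_t\|_{H^s}\lesssim \varepsilon$, giving at least a bound of the form $\varepsilon^2$, and combining with the $q\to\infty$ endpoint decay recovers $\langle t\rangle^{-d/q}$ with $q=1$, i.e. $\langle t\rangle^{-d}$.

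The only mildly delicate point — the \emph{main obstacle} such as it is — is the fractional case $\gamma \notin \mathbb{Z}$: one should check that $\langle\nabla\rangle^\gamma$ commuting through the convolution and landing on $v$ is legitimate in the Lorentz-space setting, which follows since $W^{\gamma,(q,\infty)}$ is defined in \eqref{LorentzSobSp} precisely by $\langle\nabla\rangle^\alpha f \in L^{q,\infty}$ for $\alpha\le\gamma$, and convolution with an $L^1$-type kernel (here $|\psi_t|^2\in L^{p/2}$, $p/2\ge1$) is bounded on $L^{q,\infty}$ via the Young inequality in Lorentz spaces (O'Neil). Everything else is a one-line application of \eqref{pq-ests}. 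I would therefore present the argument as: (1) state Young in Lorentz spaces; (2) verify the exponent relation from \eqref{pqp1}; (3) apply it with $v$ resp. $\langle\nabla\rangle^\gamma v$; (4) insert \eqref{pq-ests}.
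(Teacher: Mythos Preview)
Your proposal is correct and follows essentially the same route as the paper: Young's inequality in Lorentz spaces applied to $v * |\psi_t|^2$ (noting $p/2 = q'$), then the $L^p$-decay \eqref{pq-ests}, with the derivatives in the second estimate passed onto $v$ via the convolution. Your discussion of the $q=1$ case is slightly overcomplicated --- since $q=1$ forces $p=\infty$, \eqref{pq-ests} already gives $\|\psi_t\|_{L^\infty}\lesssim \varepsilon\langle t\rangle^{-d/2}$ directly, so ordinary Young's inequality with $\|\langle\nabla\rangle^\gamma v\|_{L^1}$ yields the bound without any detour through Sobolev embedding or interpolation.
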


\begin{proof}[Proof of Lemma \ref{lem:Wt}]
First consider the $L^\infty$-norm of $W_t^\psi$. By Young's inequality,
\begin{equation*}
\big\|W_t^\psi\big\|_{L^\infty}\lesssim\|v\|_{L^{q,\infty}}\big\||\psi_t|^2\big\|_{L^{q'}}
\lesssim\|v\|_{L^{q,\infty}}\|\psi_t\|_{L^p}^2,
\end{equation*}
since $p=2q'$. Hence the first inequality in \eqref{eq:Wt-decay} follows from \eqref{pq-ests} in Theorem \ref{thm:HE-exist}. If we suppose in addition that $v$ belongs to $W^{\gamma,(q,\infty)}$, then we can write
\begin{equation*}
\big\|W_t^\psi\big\|_{W^{\gamma,\infty}}\lesssim\|v\|_{W^{\gamma,(q,\infty)}}\big\||\psi_t|^2\big\|_{L^{q'}}\lesssim\|v\|_{W^{\gamma,(q,\infty)}}\|\psi_t\|^2_{L^p},
\end{equation*}
and hence \eqref{eq:Wt-decay2} follows again from \eqref{pq-ests}.
\end{proof}

Now, condition \eqref{eq:integr_wt} follows from Lemma \ref{lem:Wt} since, using $\gamma\ge1$, 
we have $\|W_t^\psi\|_{W^{1,\infty}}\\ \lesssim \langle t\rangle^{-d/q}$ with $q<d/2$.
Hence, $w_t\ls \langle t\rangle^{-d/q+1}$ and Theorem \ref{thm:max-vel-HE0} follows. 
\end{proof}

\begin{remark}[Integrability assumptions]
In dimension $3$, the endpoint case in Theorem \ref{thm:HE-exist} for our integrability conditions 
is $v \in L^{3/2}$ and $\psi_0 \in L^{6/5}$. 
This would correspond to sharp decay in $L^6_x$
that implies that $\| W_t \|_{L^\infty} \lesssim t^{-2}$,
which is the borderline rate for the current argument;
see \eqref{eq:integr_wt}. 
\end{remark}

\begin{remark}[The case of pure power NLS]\label{rem:gNLS-pf}
To extend Theorem \ref{thm:max-vel-HE0} to the NLS \eqref{NLS0} (see Remark \ref{rem:gNLS}), we take  
\begin{align}\label{NLS0Wt}
W_t^\psi := |\psi_t|^{2\s}
\end{align}
instead of $v \ast |\psi_t|^2$.
It is not hard to show that there exist numbers $\sigma_0 = \sigma_0(d)$
and $s_0 = s_0(\s,d)$ sufficiently large, such that
assuming that $\s \geq \s_0(d)$ and $s\geq s_0(\s,d)$, 
a global existence result analogous to 
Theorem \ref{thm:HE-exist} holds for \eqref{NLS0},
and implies bounds on $W_t$
that can be used to verify condition \eqref{eq:integr_wt} and apply Theorem \ref{thm:max-vel-x-Ht}.
The analogue of Proposition \ref{prop:ic-ext21} needed to construct a suitable class of 
initial data, can also be proven following the same arguments we give in Section \ref{sec:propic},
and the analogous results from Section \ref{sec:mapping} for the flow of \eqref{NLS0}.

For example, in dimension $d=3$, consider $\sigma = 3/2$ (quartic NLS) and let $p'\in[1,6/5)$.
Then one can show (as in the proof of Theorem \ref{thm:HE-exist}) that
the same estimate \eqref{Hs-ests}-\eqref{pq-ests} hold for solution of \eqref{NLS0}:
\begin{align}\label{NLSp}
{\| \psi_t \|}_{L^p} \lesssim \varepsilon \jt^{-3(1/2-1/p)},
\qquad {\| \psi_t \|}_{H^s} \lesssim \varepsilon.
\end{align}
Note that $3(1/2-1/p) > 1$ (since $p \in (6,\infty]$).

Then, let us first fix, for the sake of explanation, $p'=1$.
It follows that
\begin{align}\label{NLS1}
\begin{split}
{\| W_t^\psi \|}_{W^{1,\infty}} = {\big\| |\psi|^{2\sigma} \big\|}_{W^{1,\infty}}
	\ls {\big\| \psi_t \big\|}_{W^{1,\infty}} {\| \psi_{t} \|}^2_{L^\infty}
	\ls \varepsilon^3 \jt^{-3},
\end{split}
\end{align}
provided $s>5/2$,
and, for all $\gamma \leq s$,
\begin{align}\label{NLS2}
\begin{split}
{\| W_t^\psi \|}_{H^\gamma} = {\big\| |\psi|^{2\sigma} \big\|}_{H^\gamma}
  \ls {\big\| \psi_t \big\|}_{H^\gamma} {\| \psi_{t} \|}^2_{L^\infty} 
  \ls \varepsilon^3 \jt^{-3}.
  \end{split}
\end{align}

In particular, the condition in Proposition \ref{prop:as-en-cutoff}, 
and the stronger condition \eqref{eq:integr_wt} in Theorem \ref{thm:max-vel-x-Ht} hold.
Analogues of Lemma \ref{lem:Wt} and \ref{lem:Wt2} 
also hold (note that the only relevant thing is that the exponent $d/q > 2$,
so we can fix $q = 3/2-\e$ when comparing to the rates in \eqref{NLS1}-\eqref{NLS2}).
The mapping properties in Lemma \ref{lemHs} and \ref{lemL2gamma} can also be proved using \eqref{NLSp},
and the same goes for the estimates on the differences from Lemmas \ref{lemLpdiff}-\ref{lemL2diff},
since these only rely on \eqref{NLSp}-\eqref{NLS2} and the above mentioned lemmas.
Theorem \ref{thm:max-vel-HE0} then follows for solutions of \eqref{NLS0} with $\s=3/2$ and $p' = 1$.

\end{remark}

\medskip
\section{Mapping properties of $W_t^\psi$ and $U_t^\psi$}\label{sec:mapping}
In this section, we state several  properties of $W_t^\psi$ and the flow $U^\psi_t$ 
that will be essential ingredients in the proofs in the next section. 
Proofs are deferred to Appendix \ref{ssec:use}.

\subsection{Mapping properties of $W_t^\psi$}
Recall that the norms of $W_t^\psi$ in the spaces $L^\infty$ and $W^{\gamma,\infty}$ 
have been estimated in Lemma \ref{lem:Wt_ab}. We also need to estimate the $W^{s,2q}$-norm of $W_t^\psi$.

\begin{lemma}\label{lem:Wt_ab}
Under the conditions of Theorem \ref{thm:HE-exist}, we have that
\begin{equation}\label{eq:Wt-decay_ab}
 \big\|W_t^\psi\big\|_{W^{s,2q}}\lesssim\varepsilon^2\langle t\rangle^{-\frac{d}{2q}}.
\end{equation}
\end{lemma}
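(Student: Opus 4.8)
The plan is to move all $s$ derivatives onto the factor $\psi_t$, to absorb the convolution with $v$ by (the weak form of) Young's inequality, and then to distribute the derivatives across the product $|\psi_t|^2=\psi_t\overline{\psi_t}$ via a fractional Leibniz rule, keeping one factor in $H^s$ and extracting the time decay from the other through \eqref{pq-ests}. Only $v\in L^{q,\infty}$ (resp. $v\in L^1$ when $q=1$), cf. \eqref{v-cond-glob}, will be used: no derivative is placed on $v$. Concretely, since convolution with $v$ and the Bessel potential $\langle\nabla\rangle^s$ are commuting Fourier multipliers, $\langle\nabla\rangle^s W_t^\psi=v*\langle\nabla\rangle^s(|\psi_t|^2)$, so it suffices to bound $\|v*\langle\nabla\rangle^s(|\psi_t|^2)\|_{L^{2q}}$. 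I would first apply Young's inequality for convolution with a weak-$L^q$ function, $\|f*g\|_{L^c}\lesssim\|f\|_{L^{q,\infty}}\|g\|_{L^b}$ valid when $1+1/c=1/q+1/b$ and $1<b,c<\infty$ (ordinary Young with $v\in L^1$ in the case $q=1$); choosing $c=2q$ forces $b=2q/(2q-1)=:a_*$, and the key identity is
\begin{equation*}
\frac1{a_*}=1-\frac1{2q}=\frac12+\frac1p,
\end{equation*}
with $p$ as in \eqref{pqp1}. This reduces matters to estimating $\|\langle\nabla\rangle^s(|\psi_t|^2)\|_{L^{a_*}}$.

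For that I would invoke the Kato--Ponce (fractional Leibniz) inequality
\begin{equation*}
\big\|\langle\nabla\rangle^s(fg)\big\|_{L^{a_*}}\lesssim\big\|\langle\nabla\rangle^s f\big\|_{L^2}\|g\|_{L^p}+\|f\|_{L^p}\big\|\langle\nabla\rangle^s g\big\|_{L^2},
\end{equation*}
valid for $s\ge0$ and $1/a_*=1/2+1/p$ (when $q=1$ this is $a_*=2$, $p=\infty$, i.e. the classical Moser product estimate). Taking $f=\psi_t$, $g=\overline{\psi_t}$, and then using \eqref{Hs-ests} and \eqref{pq-ests} of Theorem \ref{thm:HE-exist}, namely $\|\psi_t\|_{H^s}\lesssim\varepsilon$ and $\|\psi_t\|_{L^p}\lesssim\varepsilon\langle t\rangle^{-d/(2q)}$, one gets
\begin{equation*}
\|W_t^\psi\|_{W^{s,2q}}\lesssim\|v\|_{L^{q,\infty}}\,\|\psi_t\|_{H^s}\,\|\psi_t\|_{L^p}\lesssim\varepsilon^2\langle t\rangle^{-d/(2q)},
\end{equation*}
which is \eqref{eq:Wt-decay_ab}.

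I do not expect a genuine obstacle: this is the same mechanism as in Lemma \ref{lem:Wt}, but with the derivatives transferred onto $\psi_t$ so that no regularity of $v$ beyond $L^{q,\infty}$ is needed, and with the slower rate $\langle t\rangle^{-d/(2q)}$ (rather than $\langle t\rangle^{-d/q}$) resulting from using only a single copy of the $L^p$-decay \eqref{pq-ests}. The only points deserving a line of justification are the admissibility of the Kato--Ponce inequality in the range $a_*\in(1,2)$ (respectively with an $L^\infty$ endpoint when $q=1$) and of the weak-type Young inequality for $v\in L^{q,\infty}$ at the above exponents; both are standard, and once the identity $1/a_*=1-1/(2q)=1/2+1/p$ is recorded, the rest is routine bookkeeping.
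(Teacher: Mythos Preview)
Your proposal is correct and follows essentially the same route as the paper: the paper's proof applies the weak Young inequality to pass from $\|W_t^\psi\|_{W^{s,2q}}$ to $\|v\|_{L^{q,\infty}}\||\psi_t|^2\|_{W^{s,p_1}}$ (with $p_1$ defined by $1/p_1=1-1/(2q)=1/2+1/p$, i.e. your $a_*$), and then the Kato--Ponce inequality to bound $\||\psi_t|^2\|_{W^{s,p_1}}\lesssim\|\psi_t\|_{L^p}\|\psi_t\|_{H^s}$, after which \eqref{Hs-ests}--\eqref{pq-ests} give the result. Your write-up simply makes the commutation $\langle\nabla\rangle^s(v*\cdot)=v*\langle\nabla\rangle^s(\cdot)$ explicit, which is the same mechanism spelled out in more detail.
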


Identifying $W_t^\psi$ with a multiplication operator, Lemmas \ref{lem:Wt} and \ref{lem:Wt_ab} imply the following
\begin{lemma}\label{lem:Wt2}
Under the conditions of Theorem \ref{thm:HE-exist}, we have that
\begin{equation}\label{eq:Wt_Hs}
\big\|W_t^\psi\big\|_{H^s\mapsto H^s} \lesssim \varepsilon^2 \langle t \rangle^{-\frac{d}{2q}}.
\end{equation}
Suppose in addition that $v$ satisfies \eqref{v-cond}. Then 
\begin{equation}\label{eq:Wt_Hs2}
\big\|W_t^\psi\big\|_{H^\gamma\mapsto H^\gamma} \lesssim \varepsilon^2 \langle t \rangle^{-\frac{d}{q}}.
\end{equation}
\end{lemma}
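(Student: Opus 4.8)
The plan is to deduce Lemma \ref{lem:Wt2} from the pointwise decay estimates on $W_t^\psi$ already established in Lemmas \ref{lem:Wt} and \ref{lem:Wt_ab}, together with standard properties of multiplication operators on Sobolev spaces. The key observation is that $W_t^\psi$, regarded as a multiplication operator, acts boundedly on $H^s$ provided $W_t^\psi$ lies in a suitable Sobolev space of high enough regularity, and its operator norm is controlled by the corresponding Sobolev norm of $W_t^\psi$.

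First I would recall the algebra/multiplier estimate: for $s\ge 0$ and $d/(2q) < s$ (which holds under our standing assumptions, since $\gamma > d/(2q)$ and $s \ge \gamma$), one has the bilinear bound ${\| fg \|}_{H^s} \lesssim {\| f \|}_{W^{s,2q}} {\| g \|}_{H^{s}} + {\| f \|}_{L^\infty}{\|g\|}_{H^s}$ — more precisely, the fractional Leibniz (Kato–Ponce) inequality gives ${\| \langle\nabla\rangle^s (fg)\|}_{L^2} \lesssim {\| \langle\nabla\rangle^s f\|}_{L^{2q}} {\| g \|}_{L^{2q'}} + {\| f \|}_{L^\infty} {\| \langle\nabla\rangle^s g \|}_{L^2}$, and since $2q' \le \infty$ one interpolates ${\| g \|}_{L^{2q'}} \lesssim {\| g \|}_{H^s}$ using $s > d/(2q) \ge d(1/2 - 1/(2q'))$. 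Hence
\begin{align*}
{\big\| W_t^\psi g \big\|}_{H^s} \lesssim \Big( {\big\| W_t^\psi \big\|}_{W^{s,2q}} + {\big\| W_t^\psi \big\|}_{L^\infty} \Big) {\| g \|}_{H^s}.
\end{align*}
Combining this with the decay rate ${\| W_t^\psi \|}_{W^{s,2q}} \lesssim \varepsilon^2 \langle t\rangle^{-d/(2q)}$ from Lemma \ref{lem:Wt_ab} and ${\| W_t^\psi \|}_{L^\infty} \lesssim \varepsilon^2\langle t\rangle^{-d/q} \lesssim \varepsilon^2 \langle t\rangle^{-d/(2q)}$ from Lemma \ref{lem:Wt}, we obtain \eqref{eq:Wt_Hs}: the slower of the two rates, $\langle t\rangle^{-d/(2q)}$, dominates.

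For the second estimate \eqref{eq:Wt_Hs2}, I would argue similarly but now with the gain coming from the hypothesis $v \in W^{\gamma,(q,\infty)}$ (or $W^{\gamma,1}$), which by Lemma \ref{lem:Wt}, estimate \eqref{eq:Wt-decay2}, yields the stronger bound ${\| W_t^\psi \|}_{W^{\gamma,\infty}} \lesssim \varepsilon^2 \langle t\rangle^{-d/q}$. Since $\gamma > d/2 \ge d/(2\cdot 1)$, the space $W^{\gamma,\infty}$ — equivalently, control of $\langle\nabla\rangle^\gamma W_t^\psi$ in $L^\infty$ — is more than enough to bound the multiplication operator on $H^\gamma$: one has ${\| fg \|}_{H^\gamma} \lesssim {\| f \|}_{W^{\gamma,\infty}} {\| g \|}_{H^\gamma}$ directly by Kato–Ponce with the $L^\infty$ endpoint on the factor $f$. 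This gives $\| W_t^\psi \|_{H^\gamma\mapsto H^\gamma} \lesssim \varepsilon^2\langle t\rangle^{-d/q}$, as claimed.

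The only mildly delicate point — and the one I would treat most carefully — is making sure the fractional Leibniz rule is applied with an admissible choice of Lebesgue exponents so that the Sobolev embedding $H^s \hookrightarrow L^{2q'}$ is valid; this is where the constraint $s > d/(2q)$ (hence $\gamma > d/(2q)$ in the hypotheses) is used, and it is exactly the same constraint already imposed in Theorem \ref{thm:HE-exist}. There is no analytic obstacle beyond bookkeeping, since all the hard decay-in-time work has been carried out in Lemmas \ref{lem:Wt} and \ref{lem:Wt_ab}; the present lemma is essentially a repackaging of those bounds into operator-norm statements. I would relegate the detailed exponent verification to Appendix \ref{ssec:use} as indicated, and in the main text simply state that \eqref{eq:Wt_Hs}–\eqref{eq:Wt_Hs2} follow from Lemmas \ref{lem:Wt} and \ref{lem:Wt_ab} via the fractional Leibniz inequality and Sobolev embedding.
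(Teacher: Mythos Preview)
Your proposal is correct and follows essentially the same route as the paper: Kato--Ponce with exponents $(2q,p)$ and $(\infty,2)$, Sobolev embedding $H^s\hookrightarrow L^p=L^{2q'}$ (using $s>d/(2q)$), and then the decay bounds from Lemmas \ref{lem:Wt} and \ref{lem:Wt_ab}. One small inaccuracy: the claim ``$\gamma>d/2$'' only holds in the $q=1$ branch of \eqref{v-cond}, but this is harmless since the multiplier bound $\|fg\|_{H^\gamma}\lesssim\|f\|_{W^{\gamma,\infty}}\|g\|_{H^\gamma}$ needs no such constraint.
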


Lemmas \ref{lem:Wt} and \ref{lem:Wt2} show that by imposing stronger regularity conditions on $v$, 
namely $v\in W^{s,(q,\infty)}$ (see Condition \eqref{v-cond}) instead of $v\in L^{q,\infty}$, 
one improves the decay rate of $W_t^\psi$. 
This can also be achieved by assuming more regularity on the initial data $\psi_0$. 

\begin{lemma}\label{lem:Wt3}
Let $1\le q  < d / 2$ and $s>\gamma$. Let $\sigma=\sigma_1+\sigma_2$ with $\sigma_1,\sigma_2\ge0$ and
\begin{equation*}
s>\frac{d}{2q}+\frac{\sigma_2d}{d-2q}.
\end{equation*}
Assume that $V$ satisfies  Conditions \eqref{Aas1}, \eqref{Aas2'} and that
\begin{align}
v \in W^{\sigma_1,(q,\infty)} \quad \text{if}\quad 1 < q < d/2 ,
  \qquad v \in L^{\sigma_1,1} \quad\text{if}\quad  q =1.
\end{align}
Then, there exists $\bar{\varepsilon}>0$ such that, for any $\varepsilon \leq \bar{\varepsilon}$,
for any
\begin{align} 
\psi_0 \in H^s \cap L^{p'}, \qquad \| \psi_0 \|_{H^s\cap L^{p'}}\le \varepsilon, 
\end{align}
we have
\begin{equation*}
\big\| W_t^\psi\big\|_{W^{\sigma,\infty}}\lesssim\varepsilon^{2-2\varepsilon'q/d}\langle t \rangle^{-d/q+\varepsilon'},
\end{equation*}
with $d/q-\varepsilon'>2$. In particular,
\begin{equation*}
\big\|W_t^\psi\big\|_{H^s\mapsto H^s}\lesssim\varepsilon^{2-2\varepsilon'q/d}\langle t \rangle^{-d/q+\varepsilon'}.
\end{equation*}
\end{lemma}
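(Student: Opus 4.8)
The plan is to follow the same structure as the proof of Lemma \ref{lem:Wt} (and \ref{lem:Wt_ab}), interpolating between an $L^\infty$-type bound that loses no time decay but costs much regularity, and the basic bound from Theorem \ref{thm:HE-exist} that is cheap on regularity but gives the full decay rate $\langle t\rangle^{-d/q}$ only in $L^\infty$. First, one needs a global existence result analogous to Theorem \ref{thm:HE-exist} but with $v\in W^{\sigma_1,(q,\infty)}$ and higher Sobolev data: this gives, in addition to \eqref{L2-ests}--\eqref{pq-ests}, control of $\|\psi_t\|_{H^s}\lesssim\varepsilon$ with $s$ as in the hypothesis, and hence (since $s>d/(2q)$) a decay estimate for $\|\psi_t\|_{W^{r,p}}$ for intermediate $r$ by Gagliardo--Nirenberg interpolation between $\|\psi_t\|_{H^s}$ and $\|\psi_t\|_{L^p}$. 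The key quantitative input will be: for $0\le r\le\sigma_1 + (\text{a gain coming from }\psi)$, one has $\|W_t^\psi\|_{W^{r,\infty}}\lesssim \|v\|_{W^{\sigma_1,(q,\infty)}}\,\big\||\psi_t|^2\big\|_{W^{r-\sigma_1,q'}}$ when $r\ge\sigma_1$, distributing the $r-\sigma_1$ remaining derivatives onto $|\psi_t|^2$.

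Concretely, I would split $\sigma=\sigma_1+\sigma_2$, put $\sigma_1$ derivatives on $v$ via Young's inequality for Lorentz spaces, and the remaining $\sigma_2$ derivatives on $|\psi_t|^2$. By the fractional Leibniz rule, $\big\||\psi_t|^2\big\|_{W^{\sigma_2,q'}}\lesssim \|\psi_t\|_{W^{\sigma_2,2q_1}}\|\psi_t\|_{L^{2q_2}}$ for suitable Hölder-dual $q_1,q_2$ with $1/q_1+1/q_2 = 1/q'$. One then estimates $\|\psi_t\|_{W^{\sigma_2,2q_1}}$ by Sobolev embedding $H^s\hookrightarrow W^{\sigma_2,2q_1}$, which requires precisely $s - \sigma_2 \ge d(1/2 - 1/(2q_1))$; optimizing over the split of Hölder exponents, and using $\|\psi_t\|_{L^{2q_2}}\lesssim\|\psi_t\|_{L^2}^{1-\theta}\|\psi_t\|_{L^p}^{\theta}\lesssim \varepsilon^{1-\theta}(\varepsilon\langle t\rangle^{-d/(2q)})^{\theta}$ for the appropriate $\theta\in(0,1]$, one arrives at a bound of the form $\|W_t^\psi\|_{W^{\sigma,\infty}}\lesssim \varepsilon^{2-2\varepsilon' q/d}\langle t\rangle^{-d/q+\varepsilon'}$, where $\varepsilon'>0$ is the (arbitrarily small) loss incurred by trading full $L^p$ decay for the stronger Sobolev norm. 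The precise relation $s>\tfrac{d}{2q}+\tfrac{\sigma_2 d}{d-2q}$ is exactly what makes the Sobolev embedding admissible (with $\theta$ close to $1$, so $\varepsilon'$ close to $0$) while keeping $d/q-\varepsilon'>2$, which is possible since $q<d/2$ gives $d/q>2$.

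The final claim, $\|W_t^\psi\|_{H^s\mapsto H^s}\lesssim \varepsilon^{2-2\varepsilon' q/d}\langle t\rangle^{-d/q+\varepsilon'}$, then follows from the $W^{\sigma,\infty}$ bound together with the algebra/multiplier estimate already used to deduce Lemma \ref{lem:Wt2} from Lemmas \ref{lem:Wt} and \ref{lem:Wt_ab}: multiplication by a function in $W^{\sigma,\infty}$ with $\sigma\ge s$ (or $\sigma$ large enough relative to $s$) is bounded on $H^s$ with norm controlled by $\|W_t^\psi\|_{W^{\sigma,\infty}}$ (combined with the $L^2$ bound via \eqref{L2-ests}, or more simply the Kato--Ponce estimate). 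I expect the main obstacle to be bookkeeping the interplay of the three exponents $\sigma_1,\sigma_2$ and the Hölder split when distributing derivatives: one must verify that the chosen intermediate Lebesgue exponent $2q_1$ makes both the fractional Leibniz rule and the Sobolev embedding $H^s\hookrightarrow W^{\sigma_2,2q_1}$ valid simultaneously, and that the resulting power of $\langle t\rangle$ is indeed $-d/q+\varepsilon'$ with $\varepsilon'\to0$ as the slack in the embedding is taken to $0$ — this is where the stated inequality $s>\tfrac{d}{2q}+\tfrac{\sigma_2 d}{d-2q}$ is used sharply. The case $q=1$ is handled identically with $L^{\sigma_1,1}\equiv W^{\sigma_1,1}$ in place of the Lorentz--Sobolev space and the obvious modification of Young's inequality.
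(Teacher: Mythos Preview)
Your approach has a genuine gap in the decay bookkeeping. When you estimate $\big\||\psi_t|^2\big\|_{W^{\sigma_2,q'}}$ via the fractional Leibniz rule and then bound $\|\psi_t\|_{W^{\sigma_2,2q_1}}$ by the Sobolev embedding $H^s\hookrightarrow W^{\sigma_2,2q_1}$, you extract \emph{no} time decay from that factor: it is controlled by $\|\psi_t\|_{H^s}\lesssim\varepsilon$ only. The remaining factor $\|\psi_t\|_{L^{2q_2}}$, interpolated between $L^2$ and $L^p$, decays at best like $\langle t\rangle^{-d/(2q)}$ (attained at $\theta=1$, i.e.\ $2q_2=p$). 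Hence your scheme yields at most $\langle t\rangle^{-d/(2q)}$, not the claimed $\langle t\rangle^{-d/q+\varepsilon'}$. Optimizing the H\"older split $q_1,q_2$ cannot repair this, since only one of the two factors carries any decay.

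The fix, which is what the paper does, is to keep the symmetric split $2q_1=2q_2=p$ (recall $p=2q'$) and then apply Gagliardo--Nirenberg interpolation to the \emph{derivative} factor itself:
\[
\|\psi_t\|_{W^{\sigma_2,p}}\lesssim\|\psi_t\|_{H^s}^{1-\beta}\|\psi_t\|_{L^p}^{\beta},
\qquad
\tfrac{1}{p}=\tfrac{\sigma_2}{d}+(1-\beta)\big(\tfrac{1}{2}-\tfrac{s}{d}\big)+\tfrac{\beta}{p}.
\]
This produces a total of $\|\psi_t\|_{L^p}^{1+\beta}$ and hence decay $\langle t\rangle^{-(1+\beta)d/(2q)}$. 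Choosing $\beta$ close to $1$ (equivalently $\varepsilon':=\tfrac{d}{2q}(1-\beta)$ small) gives the rate $-d/q+\varepsilon'$, and solving the GNS scaling condition for $s$ is precisely where the hypothesis $s>\tfrac{d}{2q}+\tfrac{\sigma_2 d}{d-2q}$ is used. In short: replace the Sobolev embedding step by a Gagliardo--Nirenberg interpolation that keeps the decaying endpoint $L^p$ in play for \emph{both} copies of $\psi_t$.
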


In the next lemmas of this section, one can replace the regularity assumption on $v$ 
(i.e. $v$ satisfies Condition \ref{v-cond}) 
by the assumptions of Lemma \ref{lem:Wt3}. 
For simplicity, and since in our application we need that $v$ satisfies \eqref{v-cond}, we do not elaborate.

\subsection{Mapping properties of $U_t^\psi$}\label{ssec:Ut}
We now prove some mapping properties for $U_t^\psi$, where $\psi$ is a global solution 
of \eqref{HE} as in Theorem \ref{thm:HE-exist}.
Our first lemma shows that $U_t^\psi$ is bounded as an operator in $H^s$.

\begin{lemma}\label{lemHs}
Under the conditions of Theorem \ref{thm:HE-exist}, there exists an absolute constant $C>0$ such that
\begin{align}\label{lemHsconc}
{\big\| U_t^\psi \big\|}_{H^s\mapsto H^s} \leq C .
\end{align}
\end{lemma}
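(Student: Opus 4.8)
The plan is to use Duhamel's formula to compare the nonlinear flow $U_t^\psi$ with the linear flow $e^{-itH}$, and then invoke the already-established $H^s$-boundedness of $e^{-itH}$ (this is guaranteed by assumptions \eqref{Aas1}--\eqref{Aas2'}; see \eqref{propagHsbound}) together with the decay of $W_t^\psi$ in the operator norm on $H^s$ from Lemma \ref{lem:Wt2}. Concretely, writing $U_t^\psi \phi = \psi_t$ with $\psi_0 = \phi$ and $H_t^\psi = H + W_t^\psi$, Duhamel gives
\begin{equation*}
U_t^\psi \phi = e^{-itH}\phi - i\int_0^t e^{-i(t-r)H} W_r^\psi\, U_r^\psi \phi\, dr.
\end{equation*}
Taking $H^s$ norms and using that $e^{-isH}$ is bounded on $H^s$ uniformly in $s\in\R$, one obtains
\begin{equation*}
{\| U_t^\psi \phi \|}_{H^s} \lesssim {\|\phi\|}_{H^s} + \int_0^t {\| W_r^\psi \|}_{H^s\mapsto H^s}\, {\| U_r^\psi \phi \|}_{H^s}\, dr.
\end{equation*}

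First I would record the $H^s$-boundedness of the free propagator, which under the Yajima-type conditions \eqref{Aas1} holds with a constant depending only on $s$ and $V$ (absolute for our purposes since $V$ is fixed); note this is exactly the content cited after \eqref{Aas2'}. Second, I would invoke Lemma \ref{lem:Wt2}, which yields ${\| W_r^\psi \|}_{H^s\mapsto H^s} \lesssim \varepsilon^2 \langle r\rangle^{-d/(2q)}$, and since $1\le q < d/2$ we have $d/(2q) > 1$, so this is integrable in $r$ on $[0,\infty)$. Third, I would close the estimate by Grönwall's inequality: setting $m(t) := {\| U_t^\psi \phi\|}_{H^s}$, the bound $m(t) \lesssim {\|\phi\|}_{H^s} + \int_0^t a(r) m(r)\, dr$ with $a \in L^1(\R_+)$ gives $m(t) \lesssim {\|\phi\|}_{H^s}\, \exp\!\big(C\int_0^\infty a(r)\,dr\big) \lesssim {\|\phi\|}_{H^s}$, where the implicit constant is absolute because $\int_0^\infty a(r)\, dr \lesssim \varepsilon^2 \lesssim 1$. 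This produces the claimed uniform bound ${\| U_t^\psi\|}_{H^s\mapsto H^s} \le C$.

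One technical point that needs care at the start is the \emph{a priori} finiteness of $m(t)$: strictly speaking Grönwall applies once we know $m(t) < \infty$ for each $t$, which follows from Theorem \ref{thm:HE-exist} (the solution $\psi_t$ lies in $C(\R, H^s)$ with ${\|\psi_t\|}_{H^s}\lesssim{\|\psi_0\|}_{H^s}$), so this part of the claim is already available and the present lemma is really upgrading that bound to one with an \emph{absolute} constant independent of the particular $\phi$ and of $t$. The main (and only mild) obstacle is therefore just bookkeeping the constants so as to confirm they do not depend on $t$ or on the solution — everything reduces to: (a) time-translation-invariance and $H^s$-boundedness of $e^{-itH}$, and (b) the uniform-in-$t$ integrability $\int_0^\infty {\| W_r^\psi\|}_{H^s\mapsto H^s}\, dr \lesssim \varepsilon^2$. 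Both are in hand from the already-cited dispersive estimate and from Lemma \ref{lem:Wt2}, so no genuinely new estimate is required.
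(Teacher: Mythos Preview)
Your proof is correct and follows essentially the same approach as the paper: Duhamel's formula for $U_t^\psi$ against $e^{-itH}$, the $H^s$-boundedness of $e^{-itH}$ from \eqref{propagHsbound}, the integrable decay $\langle r\rangle^{-d/(2q)}$ of the potential term, and Gronwall. The only cosmetic difference is that you invoke Lemma \ref{lem:Wt2} directly, whereas the paper unwraps that lemma and cites its ingredients (Lemmas \ref{lem:Wt} and \ref{lem:Wt_ab} together with Kato--Ponce and Sobolev embedding); these are equivalent.

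One small notational slip worth cleaning up: you write ``$U_t^\psi\phi=\psi_t$ with $\psi_0=\phi$'' and later justify the a~priori finiteness of $m(t)$ via Theorem \ref{thm:HE-exist}. But $U_t^\psi$ is the \emph{linear} propagator for $H+W_t^\psi$ with $\psi$ the fixed Hartree solution, so $U_t^\psi\phi$ coincides with $\psi_t$ only when $\phi$ equals the specific Hartree datum $\psi_0$. For general $\phi\in H^s$ the a~priori finiteness of $\|U_t^\psi\phi\|_{H^s}$ follows instead from standard linear theory for time-dependent Schr\"odinger equations with bounded potentials (once $W_t^\psi\in L^\infty$ is known from Theorem \ref{thm:HE-exist}). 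This does not affect the argument.
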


In order to prove Proposition \ref{prop:ic-ext21}, we also need to estimate the norm 
of $U_t^\psi$ as an operator from $H^\gamma\cap L^2_\gamma$ to $L^2_\gamma$. Note that here we need to impose stronger regularity conditions on $v$ than in the previous lemma. 

\begin{lemma}\label{lemL2gamma}
Under the conditions of Theorem \ref{thm:max-vel-HE0} and with $\gamma < d/q-1$, 
for all $\varphi \in H^\gamma\cap L^2_\gamma$, we have
\begin{align}\label{lemL2conc}
\big\| U_t^\psi \varphi \big\|_{L^2_\gamma} \lesssim \langle t\rangle^{\gamma} \|\varphi\|_{H^\gamma}
  +\|\varphi\|_{L^2_\gamma}.
\end{align}
\end{lemma}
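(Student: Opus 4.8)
The plan is to control the weighted norm $\|U_t^\psi \varphi\|_{L^2_\gamma} = \|\langle x\rangle^\gamma U_t^\psi \varphi\|_{L^2}$ by comparing $U_t^\psi$ with the free-type propagator $e^{-itH}$ via Duhamel, and by tracking how the weight $\langle x\rangle^\gamma$ grows along the flow. The key heuristic is that a particle of bounded momentum spreads at most linearly in time, so $\langle x\rangle^\gamma$ applied to the evolved state should cost at most $\langle t\rangle^\gamma$ times a momentum-weighted (i.e.\ $H^\gamma$) norm of the datum, plus the initial weighted norm. Since $\gamma$ may be non-integer, I would first reduce to integer order: by interpolation it suffices to prove the bound for $\gamma$ replaced by $\lceil \gamma\rceil$ (an integer $\ge \gamma$), at the price of using $\|\varphi\|_{H^{\lceil\gamma\rceil}}$; but because $s\ge\gamma$ and we have the freedom in the hypotheses, a cleaner route is to prove a commutator estimate $\|[\langle x\rangle^\gamma, e^{-itH}] \langle x\rangle^{-\gamma}\|\lesssim \langle t\rangle^\gamma$ directly using the known dispersive/Sobolev mapping properties of $e^{-itH}$ (cf.\ \eqref{propagHsbound}, \eqref{eq:disp}) together with the standard fact that, for the free Schr\"odinger group, $\langle x\rangle^{-\gamma} e^{-itH}\langle x\rangle^\gamma$ is controlled by $\langle t|\nabla|\rangle^\gamma$-type factors. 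Either way, Step 1 establishes the desired estimate for the \emph{linear} propagator: $\|e^{-itH}\varphi\|_{L^2_\gamma}\lesssim \langle t\rangle^\gamma\|\varphi\|_{H^\gamma} + \|\varphi\|_{L^2_\gamma}$.

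Step 2 is the perturbative passage from $e^{-itH}$ to $U_t^\psi$. Write the Duhamel formula
\begin{align*}
U_t^\psi \varphi = e^{-itH}\varphi - i\int_0^t e^{-i(t-r)H}\, W_r^\psi\, U_r^\psi \varphi \, dr,
\end{align*}
apply $\langle x\rangle^\gamma$ and take $L^2$ norms. For the inhomogeneous term I would bound
\begin{align*}
\big\| \langle x\rangle^\gamma e^{-i(t-r)H} W_r^\psi U_r^\psi\varphi \big\|_{L^2}
\lesssim \langle t-r\rangle^\gamma \|W_r^\psi U_r^\psi\varphi\|_{H^\gamma} + \|W_r^\psi U_r^\psi\varphi\|_{L^2_\gamma},
\end{align*}
using Step 1 again. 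Now $\|W_r^\psi U_r^\psi\varphi\|_{H^\gamma}\lesssim \|W_r^\psi\|_{H^\gamma\mapsto H^\gamma}\|U_r^\psi\varphi\|_{H^\gamma}$, which by Lemma \ref{lem:Wt2} (the improved bound \eqref{eq:Wt_Hs2}) and Lemma \ref{lemHs} is $\lesssim \varepsilon^2\langle r\rangle^{-d/q}\|\varphi\|_{H^\gamma}$; and for the weighted piece $\|W_r^\psi U_r^\psi\varphi\|_{L^2_\gamma}\lesssim \|\langle x\rangle^\gamma W_r^\psi\|_{L^\infty}\|U_r^\psi\varphi\|_{L^2} + \ldots$ — but $W_r^\psi = v*|\psi_r|^2$ need not be spatially decaying, so this term is handled instead by $\|W_r^\psi\|_{L^\infty}\|U_r^\psi\varphi\|_{L^2_\gamma} \lesssim \varepsilon^2\langle r\rangle^{-d/q}\|U_r^\psi\varphi\|_{L^2_\gamma}$, using $\|U_r^\psi\varphi\|_{L^2}=\|\varphi\|_{L^2}$ is not quite enough, so one closes a Gronwall argument in the quantity $\|U_r^\psi\varphi\|_{L^2_\gamma}$. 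Putting these together gives
\begin{align*}
\|U_t^\psi\varphi\|_{L^2_\gamma} \lesssim \langle t\rangle^\gamma\|\varphi\|_{H^\gamma} + \|\varphi\|_{L^2_\gamma}
+ \int_0^t \langle t-r\rangle^\gamma \varepsilon^2\langle r\rangle^{-d/q}\|\varphi\|_{H^\gamma}\,dr
+ \int_0^t \varepsilon^2 \langle r\rangle^{-d/q}\|U_r^\psi\varphi\|_{L^2_\gamma}\,dr.
\end{align*}

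Step 3 is to close the estimate. The first integral is $\lesssim \langle t\rangle^\gamma\|\varphi\|_{H^\gamma}$ because, crucially, $d/q > \gamma + 1$ by the hypothesis $\gamma < d/q - 1$, so $\int_0^t \langle t-r\rangle^\gamma\langle r\rangle^{-d/q}dr \lesssim \langle t\rangle^\gamma \int_0^\infty \langle r\rangle^{\gamma - d/q}dr \lesssim \langle t\rangle^\gamma$ (splitting $r\le t/2$ and $r\ge t/2$). The last term is absorbed by Gr\"onwall's inequality: since $\int_0^\infty \varepsilon^2\langle r\rangle^{-d/q}dr < \infty$ (again $d/q>1$), the Gr\"onwall factor $\exp(C\varepsilon^2\int_0^t\langle r\rangle^{-d/q}dr)$ is bounded uniformly in $t$, yielding the claimed bound $\|U_t^\psi\varphi\|_{L^2_\gamma}\lesssim \langle t\rangle^\gamma\|\varphi\|_{H^\gamma} + \|\varphi\|_{L^2_\gamma}$.

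The main obstacle I anticipate is Step 1 — the weighted estimate for the linear propagator $e^{-itH}$ with the non-decaying potential $V$ and a non-integer weight $\gamma$. One must be careful that the Yajima-type conditions \eqref{Aas1} and the absence of zero eigenvalue/resonance \eqref{Aas2'} are exactly what is needed to guarantee that $e^{-itH}$ inherits the weighted-space mapping behavior of the free group $e^{it\Delta/2}$ (for which $\langle x\rangle^{-\gamma}e^{it\Delta/2}\langle x\rangle^{\gamma}$ is a pseudodifferential-type operator of order controlled by $\langle t\rangle^\gamma$); this is where the large value $\sigma\ge \tfrac32 d + 3$ in \eqref{Aas1} and the regularity $\alpha\le\gamma+\lfloor(d-1)/2\rfloor$ are consumed. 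A secondary technical point is the handling of fractional $\gamma$ in the product and commutator estimates, which I would dispatch with standard fractional Leibniz (Kato–Ponce) inequalities and interpolation rather than writing everything out.
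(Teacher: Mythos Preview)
Your approach is correct but organizationally different from the paper's. You separate the problem into (i) a weighted estimate for the \emph{linear} group $e^{-itH}$ and (ii) a Duhamel/Gr\"onwall perturbation to reach $U_t^\psi$. The paper instead works directly on $U_t^\psi$ from the outset: it uses the commutator identity $[A,U_t^\psi]=-iU_t^\psi\int_0^t (U_\tau^\psi)^{-1}[A,H_\tau^\psi]U_\tau^\psi\,d\tau$ with $A=\langle H\rangle^{\ell/2}\langle x\rangle^k$ and runs a double induction on integer pairs $(k,\ell)$, proving $\|\langle H\rangle^{\ell/2}\langle x\rangle^k U_t^\psi\varphi\|\lesssim \sum_{j=0}^k\langle t\rangle^j\|\langle H\rangle^{(\ell+j)/2}\langle x\rangle^{k-j}\varphi\|$. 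The extra $\langle H\rangle^{\ell/2}$ index is needed because $[\langle x\rangle^{k+1},\Delta]$ produces a factor of $\nabla\sim\langle H\rangle^{1/2}$, so one must simultaneously track regularity and weight; the $W_r^\psi$ contribution enters only through $[\langle H\rangle^{(\ell+1)/2},W_r^\psi]$ and is absorbed using the $W^{\gamma,\infty}$ decay $\langle r\rangle^{-d/q}$. After this, a Littlewood--Paley argument collapses the sum to the two endpoint terms and complex interpolation passes from integer $k$ to real $\gamma$.

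The trade-off: your route is conceptually cleaner (linear estimate plus soft perturbation) and your Steps~2--3 are fine as written---the use of \eqref{eq:Wt_Hs2}, Lemma~\ref{lemHs}, the convolution bound $\int_0^t\langle t-r\rangle^\gamma\langle r\rangle^{-d/q}dr\lesssim\langle t\rangle^\gamma$ under $\gamma<d/q-1$, and Gr\"onwall all go through. But your Step~1 is precisely where the work lies, and you leave it as a black box. Invoking Yajima/wave-operator machinery to transfer weighted bounds from $e^{it\Delta/2}$ to $e^{-itH}$ is more delicate than you suggest (boundedness of $W_\pm$ on $L^2_\gamma$ is not part of the standard Yajima package). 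The most direct way to prove your Step~1 is in fact the paper's commutator/induction argument specialized to $W_t\equiv 0$---at which point the paper's choice to run that same argument once for the full $U_t^\psi$, rather than twice (once for $e^{-itH}$, then again perturbatively), is an economy rather than a genuinely different method.
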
 

We mention that our proof of Lemma \ref{lemL2gamma} actually establishes a stronger result than \eqref{lemL2conc}, as we estimate $\|\langle H\rangle^{\frac{\ell}{2}} \langle x\rangle^{\gamma'} U_t^\psi \varphi \| $ for suitable values of $\ell$ and $\gamma'$, see \eqref{eq:induc3}. The estimate \eqref{lemL2conc} is however sufficient for our purpose.

\subsection{Estimates on differences}\label{ssec:diff}

The result of this subsection are needed to prove the contraction property 
in the fixed point argument used to establish Proposition \ref{prop:ic-ext21}. 
The first lemma estimates the differences between two solutions of \eqref{HE}.

\begin{lemma}\label{lemLpdiff}
Under the conditions of Theorem \ref{thm:HE-exist}, consider $\psi_t$ and $\varphi_t$ two global solutions 
of \eqref{HE} as in Theorem \ref{thm:HE-exist}. We have
\begin{align}\label{lemLpdiffconc}
{\big\| \psi_t - \varphi_t \big\|}_{L^p} \lesssim \jt^{-d/(2q)} {\| \psi_0 - \varphi_0 \|}_{L^{p'} \cap H^s}. 
\end{align}
\end{lemma}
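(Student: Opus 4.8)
The plan is to estimate the difference $\psi_t - \varphi_t$ using Duhamel's formula with respect to the \emph{free} (rather than $\psi$-dependent) propagator $e^{-itH}$, and then run a fixed-point/bootstrap argument of the same type that underlies Theorem \ref{thm:HE-exist}. First I would write, with $N(\psi) := (v*|\psi|^2)\psi$,
\begin{align*}
\psi_t - \varphi_t = e^{-itH}(\psi_0 - \varphi_0) - i \int_0^t e^{-i(t-r)H}\big( N(\psi_r) - N(\varphi_r) \big)\, dr.
\end{align*}
The algebraic identity
\begin{align*}
N(\psi) - N(\varphi) = \big(v*|\psi|^2\big)(\psi-\varphi) + \big(v*(|\psi|^2-|\varphi|^2)\big)\varphi,
\end{align*}
together with $|\psi|^2 - |\varphi|^2 = \operatorname{Re}\big(\overline{(\psi-\varphi)}(\psi+\varphi)\big)$, reduces everything to terms that are linear in the difference $\psi-\varphi$ and have two more factors of $\psi$ or $\varphi$. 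Young's inequality (exactly as in Lemma \ref{lem:Wt}, using $v\in L^{q,\infty}$, $p=2q'$) bounds each source term in $L^{p'}$ by $\| \psi_r - \varphi_r\|_{L^p}\, \big(\|\psi_r\|_{L^p}^2 + \|\varphi_r\|_{L^p}^2\big)$, up to also needing an $L^2\cap L^{p'}$ factor to close — here one uses that $H^s\hookrightarrow L^2\cap L^\infty$ for $s>d/2\ge d/(2q)$, and the conserved/controlled norms $\|\psi_t\|_{L^2}=\|\psi_0\|_{L^2}$, $\|\psi_t\|_{H^s}\lesssim\varepsilon$ from Theorem \ref{thm:HE-exist}.

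Next I would feed this into the dispersive estimate \eqref{eq:disp} for $e^{-itH}$ (valid under \eqref{Aas1}--\eqref{Aas2'}, which give the same $L^{p'}\to L^p$ decay as the free Schrödinger group, namely $\|e^{-itH}f\|_{L^p}\lesssim |t|^{-d(1/2-1/p)}\|f\|_{L^{p'}} = |t|^{-d/(2q)}\|f\|_{L^{p'}}$), and bound the linear term by $\jt^{-d/(2q)}\|\psi_0-\varphi_0\|_{L^{p'}}$. For the Duhamel term I would split $\int_0^t = \int_0^{t/2} + \int_{t/2}^t$; on $[0,t/2]$ use the dispersive decay $|t-r|^{-d/(2q)}\sim \jt^{-d/(2q)}$ against the source bounded in $L^{p'}$ by $\|\psi_r-\varphi_r\|_{L^p}\,\varepsilon^2\jt^{\,-d/(2q)}$... actually more simply, bound the source in $L^{p'}$ using the already-established decay $\|\psi_r\|_{L^p},\|\varphi_r\|_{L^p}\lesssim\varepsilon\jr^{-d/(2q)}$ and the a priori $L^2$-bound on the difference, and on $[t/2,t]$ use that $\jr^{-d/(2q)}\sim\jt^{-d/(2q)}$ while $\int|t-r|^{-d(1/2-1/p')}dr$ near $r=t$ is integrable for $q$ in the stated range (this is where one uses $d/(2q)<1$, i.e. $q>d/2$... wait, here $q<d/2$ so $d/(2q)>1$; instead the near-endpoint integrability of $e^{-i(t-r)H}:L^{p'}\to L^p$ needs care and one should instead estimate that piece in $L^2$ via the unitarity of $e^{-i(t-r)H}$ on $L^2$, then interpolate). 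The upshot is a closed inequality
\begin{align*}
\sup_{[0,t]} \jr^{d/(2q)}\|\psi_r-\varphi_r\|_{L^p} \lesssim \|\psi_0-\varphi_0\|_{L^{p'}\cap H^s} + \varepsilon^2 \sup_{[0,t]} \jr^{d/(2q)}\|\psi_r-\varphi_r\|_{L^p},
\end{align*}
and absorbing the last term for $\varepsilon$ small gives \eqref{lemLpdiffconc}.

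The main obstacle I anticipate is the endpoint behavior of the time integral: since $q<d/2$ forces $d(1/2-1/p) = d/(2q) > 1$, the dispersive kernel $|t-r|^{-d/(2q)}$ is \emph{not} integrable near $r=t$, so one cannot naively run the $L^{p'}\to L^p$ estimate on all of $[0,t]$. The standard fix (as in the proof of Theorem \ref{thm:HE-exist} in Appendix \ref{sec:HE-exist}) is to interpolate the dispersive estimate with the $L^2$-isometry: use $\|e^{-i\tau H}f\|_{L^{p}} \lesssim |\tau|^{-d(1/2-1/\tilde{p})}\|f\|_{L^{\tilde p'}}$ for an intermediate exponent $\tilde p$ with $d(1/2-1/\tilde p)<1$, combined with Hölder in time on the source term (which decays at the integrable-in-time rate $\jr^{-d/(2q)}$, $d/(2q)>1$). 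Apart from that, everything is a routine replication of the contraction-mapping estimates already used to prove Theorem \ref{thm:HE-exist}, now applied to the difference of two solutions rather than to a single iterate, and the only new algebraic input is the bilinear/trilinear difference identity for $N(\psi)-N(\varphi)$ displayed above.
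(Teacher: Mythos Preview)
Your overall strategy matches the paper's: Duhamel with respect to $e^{-itH}$, the trilinear difference identity for $N(\psi)-N(\varphi)$, source bounds in $L^{p'}$ via H\"older/Young giving $\varepsilon^2\|\psi_r-\varphi_r\|_{L^p}$, and a bootstrap on $\sup_r \jr^{d/(2q)}\|\psi_r-\varphi_r\|_{L^p}$.

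The one place where you diverge from the paper is in handling the short-time singularity of $|t-r|^{-d/(2q)}$. You correctly flag that $d/(2q)>1$ makes this non-integrable, but your proposed cure (split $[0,t]$, interpolate to an intermediate exponent $\tilde p$, etc.) is more elaborate than what the paper actually does, and your description of ``the standard fix (as in the proof of Theorem~\ref{thm:HE-exist})'' does not match what is in Appendix~\ref{sec:HE-exist}. The paper simply uses the combined estimate \eqref{disp-est-comb},
\[
\|e^{-i\tau H}f\|_{L^p}\lesssim \langle\tau\rangle^{-d/(2q)}\|f\|_{H^s\cap L^{p'}},
\]
which replaces $|\tau|^{-d/(2q)}$ by $\langle\tau\rangle^{-d/(2q)}$ at the price of estimating the source in $H^s$ as well as $L^{p'}$. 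Since the $H^s$ bound on $W_r^\psi\psi_r - W_r^\varphi\varphi_r$ is already available from the contraction argument in the existence proof, this gives directly
\[
\|\psi_t-\varphi_t\|_{L^p}\le C\jt^{-d/(2q)}\|\psi_0-\varphi_0\|_{L^{p'}} + C\varepsilon^2\int_0^t \langle t-r\rangle^{-d/(2q)}\|\psi_r-\varphi_r\|_{L^p}\,dr,
\]
and the kernel $\langle t-r\rangle^{-d/(2q)}$ is now bounded near $r=t$ and integrable at infinity. No splitting or intermediate exponents are needed.
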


Using Lemma \ref{lemLpdiff}, it is not difficult to prove the following lemma.

\begin{lemma}\label{lemWtdiff}
Under the conditions of Theorem \ref{thm:HE-exist}, consider $\psi_t$ and $\varphi_t$ two global solutions 
of \eqref{HE} as in Theorem \ref{thm:HE-exist}. We have 
\begin{align}\label{lemWrdiffconc0}
{\big\| W_t^\psi - W_t^\varphi \big\|}_{L^\infty} \lesssim \varepsilon \jt^{-d/q} {\| \psi_0 - \varphi_0 \|}_{L^{p'} \cap H^s},
\end{align}
\begin{align}\label{lemWrdiffconc0a}
{\big\| W_t^\psi - W_t^\varphi \big\|}_{W^{s,2q}} \lesssim  \jt^{-d/(2q)} 
  {\| \psi_0 - \varphi_0 \|}_{L^{p'} \cap H^s},
\end{align}
and
\begin{align}\label{lemWrdiffconc}
{\big\| W_t^\psi - W_t^\varphi \big\|}_{H^s\mapsto H^s} \lesssim  \jt^{-d/(2q)} {\| \psi_0 - \varphi_0 \|}_{L^{p'} \cap H^s}. 
\end{align}
If in addition $v$ satisfies \eqref{v-cond}, then 
\begin{align}\label{lemWrdiffconcb}
{\big\| W_t^\psi - W_t^\varphi \big\|}_{W^{\gamma,\infty}} 
  \lesssim \varepsilon \jt^{-d/q} {\| \psi_0 - \varphi_0 \|}_{L^{p'} \cap H^s},
\end{align}
and
\begin{align}\label{lemWrdiffconc2}
{\big\| W_t^\psi - W_t^\varphi \big\|}_{H^\gamma\mapsto H^\gamma} 
  \lesssim \varepsilon \jt^{-d/q} {\| \psi_0 - \varphi_0 \|}_{L^{p'} \cap H^s}. 
\end{align}
\end{lemma}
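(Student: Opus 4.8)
The plan is to reduce everything to the difference estimate for the nonlinear flow in Lemma \ref{lemLpdiff}, exactly as the corresponding non-difference bounds in Lemmas \ref{lem:Wt}, \ref{lem:Wt_ab} and \ref{lem:Wt2} were reduced to the decay estimate \eqref{pq-ests}. The key algebraic observation is that $W_t^\psi - W_t^\varphi = v\ast(|\psi_t|^2 - |\varphi_t|^2) = v\ast\big(\overline{(\psi_t-\varphi_t)}\,\psi_t + \overline{\varphi_t}\,(\psi_t-\varphi_t)\big)$, so that every norm of the difference is controlled by a norm of $v$ convolved with a pointwise product of $\psi_t-\varphi_t$ with one of the two solutions. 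From here one applies Young's (or Young's inequality in Lorentz spaces, for the case $1<q<d/2$), followed by Hölder, to peel the convolution off and land on products $\|\psi_t-\varphi_t\|_{L^p}\|\psi_t\|_{L^p}$ or their higher-regularity analogues, and then invokes \eqref{pq-ests}, \eqref{Hs-ests} and Lemma \ref{lemLpdiff}.

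Concretely, for \eqref{lemWrdiffconc0} I would write $\|W_t^\psi - W_t^\varphi\|_{L^\infty} \lesssim \|v\|_{L^{q,\infty}} \big\||\psi_t|^2-|\varphi_t|^2\big\|_{L^{q'}} \lesssim \|v\|_{L^{q,\infty}}\|\psi_t-\varphi_t\|_{L^p}\big(\|\psi_t\|_{L^p}+\|\varphi_t\|_{L^p}\big)$ using $p = 2q'$, then apply $\|\psi_t\|_{L^p}+\|\varphi_t\|_{L^p}\lesssim \varepsilon\jt^{-d/(2q)}$ from \eqref{pq-ests} together with Lemma \ref{lemLpdiff}, which produces $\varepsilon\jt^{-d/q}\|\psi_0-\varphi_0\|_{L^{p'}\cap H^s}$ — this is exactly \eqref{lemWrdiffconc0}, and \eqref{lemWrdiffconcb} follows in the same way using $v\in W^{\gamma,(q,\infty)}$ in place of $v\in L^{q,\infty}$ and the Leibniz rule to distribute derivatives, together with $\|\psi_t\|_{W^{\gamma,p}}\lesssim\varepsilon$-type bounds (which come from interpolating \eqref{Hs-ests} and \eqref{pq-ests}, since $s\geq\gamma$) and the corresponding higher-regularity difference bound; since \eqref{pq-ests} and Lemma \ref{lemLpdiff} only provide $L^p$ decay with $L^{p'}\cap H^s$ on the right, one keeps the slowly-decaying factor in $H^\gamma$ (costing no decay but bounded by $\varepsilon$) and the fast-decaying factor in $L^p$. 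For \eqref{lemWrdiffconc0a} one uses $\|W_t^\psi-W_t^\varphi\|_{W^{s,2q}}\lesssim\|v\|_{W^{s,(q,\infty)}}\big\||\psi_t|^2-|\varphi_t|^2\big\|_{W^{s,q'}}$ (Young in Lorentz spaces again, noting $W^{s,(q,\infty)}\ast W^{s,q'}\hookrightarrow W^{s,\infty}$ is not quite right — rather $L^{q,\infty}\ast L^{q'}\hookrightarrow L^\infty$ and one places all $s$ derivatives on the $|\psi|^2-|\varphi|^2$ factor via $\|v\ast f\|_{W^{s,\infty}} = \|v\ast\langle\nabla\rangle^s f\|_{L^\infty}\lesssim\|v\|_{L^{q,\infty}}\|\langle\nabla\rangle^s f\|_{L^{q'}}$, then distributes the $s$ derivatives by the fractional Leibniz/Kato–Ponce rule), reducing to $\|\psi_t-\varphi_t\|_{W^{s,2q}}\|\psi_t\|_{L^\infty}$-type terms plus $\|\psi_t-\varphi_t\|_{L^{2q}}\|\psi_t\|_{W^{s,\infty}}$; here the rate $\jt^{-d/(2q)}$ rather than $\jt^{-d/q}$ appears because only one of the two factors is taken in the decaying $L^p$ norm while the other carries the $s$ derivatives at the cost of no decay, which is consistent with \eqref{eq:Wt-decay_ab} in Lemma \ref{lem:Wt_ab}. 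Finally \eqref{lemWrdiffconc} and \eqref{lemWrdiffconc2} follow from \eqref{lemWrdiffconc0a}, \eqref{lemWrdiffconc0}, \eqref{lemWrdiffconcb} exactly as Lemma \ref{lem:Wt2} follows from Lemmas \ref{lem:Wt} and \ref{lem:Wt_ab}, i.e. by the product estimate bounding multiplication by a function on $H^s$ (resp. $H^\gamma$) in terms of its $W^{s,2q}$ and $L^\infty$ (resp. $W^{\gamma,\infty}$) norms.

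The main obstacle, as usual in these Kato–Ponce-type arguments, is the bookkeeping of the distribution of derivatives and the choice of Hölder exponents in \eqref{lemWrdiffconc0a}, where one must make sure the intermediate Lebesgue exponents are admissible (i.e. both the decaying factor sits at an exponent in the range where \eqref{pq-ests} and the analogous bound for differences give decay, and the non-decaying factor sits where \eqref{Hs-ests} plus Sobolev embedding $H^s\hookrightarrow W^{\gamma',\infty}$ — valid since $s>d/(2q)+$ enough, guaranteed by $s\geq\gamma$ and the running hypotheses — controls it). The time-decay accounting is otherwise automatic: every estimate has the form (norm of $v$) $\times$ (one decaying factor from Lemma \ref{lemLpdiff} or \eqref{pq-ests}) $\times$ (one or two essentially-bounded factors from \eqref{Hs-ests}), and the stated rates ($\jt^{-d/q}$ in $L^\infty$/$W^{\gamma,\infty}$ versus $\jt^{-d/(2q)}$ in $W^{s,2q}$) reflect whether the higher-derivative factor can be placed in a decaying norm or not. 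No new ideas beyond those already used for Lemmas \ref{lem:Wt}--\ref{lem:Wt2} are needed; Lemma \ref{lemLpdiff} does all the genuinely nonlinear work.
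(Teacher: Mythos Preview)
Your approach is correct and matches the paper's proof: Young/H\"older to reduce to $|\psi_t|^2 - |\varphi_t|^2$, then Kato--Ponce together with Lemma \ref{lemLpdiff}, \eqref{pq-ests} and \eqref{Hs-ests}, with the operator bounds \eqref{lemWrdiffconc} and \eqref{lemWrdiffconc2} following from the function-space bounds exactly as in Lemma \ref{lem:Wt2}. One small bookkeeping correction for \eqref{lemWrdiffconc0a}: the target norm is $W^{s,2q}$ (not $W^{s,\infty}$), so after Young you land on $\big\||\psi_t|^2-|\varphi_t|^2\big\|_{W^{s,p_1}}$ with $1/p_1=1/2+1/p$ (cf.\ \eqref{W-estpq}), and the correct Kato--Ponce split is $H^s\times L^p$ as in \eqref{W-est}, not $W^{s,2q}\times L^\infty$ (which would force $q=1$).
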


Finally, we estimate the norms of the differences of the flows $U_t^\psi-U_t^\varphi$.

\begin{lemma}\label{lemHsdiff}
Under the conditions of Theorem \ref{thm:HE-exist}, consider $\psi_t$ and $\varphi_t$ two global solutions 
of \eqref{HE} as in Theorem \ref{thm:HE-exist}.
Then we have
\begin{align}\label{lemHsdiffconc}
{\big\| U_t^\psi - U_t^\varphi \big\|}_{H^s \mapsto H^s} 
  \lesssim  {\| \psi_0 - \varphi_0 \|}_{L^{p'} \cap H^s}. 
\end{align} 
\end{lemma}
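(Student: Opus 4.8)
\textbf{Proof plan for Lemma \ref{lemHsdiff}.} The plan is to use the Duhamel representation for the difference of the two flows acting on an arbitrary $\varphi\in H^s$. Writing $u_t := (U_t^\psi - U_t^\varphi)\varphi$, one has $u_t = -i\int_0^t U_{t,r}^\psi\big(W_r^\psi - W_r^\varphi\big)U_r^\varphi\varphi\,dr$, where $U_{t,r}^\psi$ denotes the two-parameter propagator generated by $H_r^\psi$. First I would take the $H^s$ norm under the integral sign and insert the bounds already at our disposal: Lemma \ref{lemHs} (and its obvious two-parameter analogue) gives $\|U_{t,r}^\psi\|_{H^s\mapsto H^s}\le C$ and $\|U_r^\varphi\varphi\|_{H^s}\le C\|\varphi\|_{H^s}$, while the key input is the difference estimate \eqref{lemWrdiffconc} from Lemma \ref{lemWtdiff}, namely $\|W_r^\psi - W_r^\varphi\|_{H^s\mapsto H^s}\lesssim \langle r\rangle^{-d/(2q)}\|\psi_0-\varphi_0\|_{L^{p'}\cap H^s}$.

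Combining these, the integrand is bounded by $C\langle r\rangle^{-d/(2q)}\|\psi_0-\varphi_0\|_{L^{p'}\cap H^s}\|\varphi\|_{H^s}$, and since $q<d/2$ we have $d/(2q)>1$, so $\int_0^\infty \langle r\rangle^{-d/(2q)}\,dr<\infty$. This yields $\|u_t\|_{H^s}\lesssim \|\psi_0-\varphi_0\|_{L^{p'}\cap H^s}\|\varphi\|_{H^s}$ uniformly in $t$, which is precisely \eqref{lemHsdiffconc} after taking the supremum over $\|\varphi\|_{H^s}\le 1$. One technical point worth spelling out is the justification of the Duhamel formula at the level of $H^s$: since both $\psi$ and $\varphi$ are $H^s$-solutions with the mapping bound of Lemma \ref{lemHs} in hand, and $W_r^\psi-W_r^\varphi$ is a bounded multiplication operator on $H^s$ by Lemma \ref{lem:Wt2}, the integral converges absolutely in $\mathcal{B}(H^s)$ and the identity $\partial_r\big(U_{t,r}^\psi U_r^\varphi\big) = -i\,U_{t,r}^\psi (W_r^\psi - W_r^\varphi) U_r^\varphi$ holds in the strong sense on $H^s$.

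I do not expect any serious obstacle here: the lemma is essentially a bookkeeping consequence of the previously established pieces. The only place requiring a little care is ensuring the decay rate $d/(2q)>1$ is genuinely available — it is, because the standing hypothesis throughout is $q<d/2$ (equivalently $1\le q<d/2$ in Theorem \ref{thm:HE-exist}) — so the time integral converges and the bound is uniform in $t\ge 0$ (and, by time-reversal symmetry of the equation, in $t\in\R$). If one wanted to avoid even invoking \eqref{lemWrdiffconc} and instead argue from scratch, one could use \eqref{lemWrdiffconc0} and \eqref{lemWrdiffconc0a} together with an interpolation/product estimate in $H^s$, but using Lemma \ref{lemWtdiff} directly is cleaner.
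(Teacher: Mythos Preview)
Your argument is correct and reaches the same conclusion, but the route differs slightly from the paper's. The paper writes both $u_1:=U_t^\psi\phi$ and $u_2:=U_t^\varphi\phi$ via Duhamel with respect to the free flow $e^{-itH}$, subtracts, and splits the integrand as $(W_r^\psi-W_r^\varphi)(u_1)_r + W_r^\varphi\big((u_1)_r-(u_2)_r\big)$; the first piece is handled by Lemma \ref{lemWtdiff} exactly as you do, but the second piece produces a term $\varepsilon\langle r\rangle^{-d/(2q)}\|(u_1)_r-(u_2)_r\|_{H^s}$ on the right, which is then closed by Gronwall. Your version instead uses the interpolated Duhamel identity $U_t^\psi-U_t^\varphi=-i\int_0^t U_{t,r}^\psi(W_r^\psi-W_r^\varphi)U_r^\varphi\,dr$, which isolates only the potential difference and avoids Gronwall entirely, at the modest cost of invoking the two-parameter analogue of Lemma \ref{lemHs} (which indeed follows by the same proof, since the decay of $\|W_r^\psi\|$ only improves when integrating from a positive initial time). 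Both approaches rest on the same key input \eqref{lemWrdiffconc} and the integrability condition $d/(2q)>1$; yours is marginally cleaner, the paper's stays closer to the one-parameter objects already in hand.
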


\begin{lemma}\label{lemL2diff}
Under the conditions of Theorem \ref{thm:max-vel-HE0} and with $\gamma\le d/q-1$, consider $\psi_t$ and $\varphi_t$ two global solutions 
of \eqref{HE} as in Theorem \ref{thm:HE-exist}.
For all $f \in H^\gamma\cap L^2_\gamma$, we have
\begin{align}\label{lemL2diffconc}
{\big\| \big(U_t^\psi - U_t^\varphi \big)f\big\|}_{L^2_\gamma}
  \lesssim {\| \psi_0 - \varphi_0 \|}_{L^{p'} \cap H^\gamma} \big ( \langle t\rangle^\gamma  \|f\|_{H^\gamma} + \|f\|_{L^2_\gamma} \big). 
\end{align} 
\end{lemma}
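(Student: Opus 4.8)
The plan is to mimic the proof of Lemma \ref{lemL2gamma}, but applied to the difference of the two flows, with the gain that the difference of the potentials $W_t^\psi - W_t^\varphi$ carries a factor ${\| \psi_0 - \varphi_0 \|}_{L^{p'}\cap H^\gamma}$ by Lemma \ref{lemWtdiff}. First I would write the Duhamel-type identity for the difference of the two propagators generated by $H_t^\psi = H + W_t^\psi$ and $H_t^\varphi = H + W_t^\varphi$, namely
\begin{align*}
\big(U_t^\psi - U_t^\varphi\big)f = -i\int_0^t U_t^\psi \big(U_r^\psi\big)^{-1}\big(W_r^\psi - W_r^\varphi\big) U_r^\varphi f \, dr.
\end{align*}
Applying $\langle x\rangle^\gamma$ to this identity and using that $U_t^\psi$ and $(U_r^\psi)^{-1}$ almost commute with $\langle x\rangle^\gamma$ up to controllable errors (precisely the mechanism already used to prove Lemma \ref{lemL2gamma}, via the commutator estimates recorded in \eqref{eq:induc3}), I would reduce matters to estimating, for each $r \in [0,t]$,
\begin{align*}
\big\| \langle x\rangle^\gamma U_t^\psi \big(U_r^\psi\big)^{-1}\big(W_r^\psi - W_r^\varphi\big) U_r^\varphi f \big\|.
\end{align*}

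The key point is to split the weight $\langle x\rangle^\gamma \lesssim \langle x - vr\rangle^\gamma + \langle r\rangle^\gamma$ type reasoning, or more simply to run the same induction on powers of $\langle x\rangle$ and $\langle H\rangle$ as in Lemma \ref{lemL2gamma}: the propagator $U_t^\psi(U_r^\psi)^{-1}$ moves $\langle x\rangle^\gamma$ to $\langle x\rangle^\gamma$ at the cost of a polynomial factor $\langle t - r\rangle^\gamma \lesssim \langle t\rangle^\gamma$ applied to lower-order weighted/energy norms, while the difference $W_r^\psi - W_r^\varphi$, viewed as a multiplication operator, is bounded on both $H^\gamma$ and (after moving one weight onto it) on the relevant weighted spaces, with norm $\lesssim \varepsilon\langle r\rangle^{-d/q}{\| \psi_0 - \varphi_0 \|}_{L^{p'}\cap H^\gamma}$ by \eqref{lemWrdiffconc0}, \eqref{lemWrdiffconcb}, \eqref{lemWrdiffconc2}. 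Finally $U_r^\varphi f$ is controlled in $L^2_\gamma$ by Lemma \ref{lemL2gamma} applied to $\varphi$: $\|U_r^\varphi f\|_{L^2_\gamma} \lesssim \langle r\rangle^\gamma \|f\|_{H^\gamma} + \|f\|_{L^2_\gamma}$, and in $H^\gamma$ by (the $H^\gamma$-analogue of) Lemma \ref{lemHs}. Multiplying these three contributions and integrating $dr$ — the decaying factor $\langle r\rangle^{-d/q}$ from $W_r^\psi - W_r^\varphi$ absorbs the growing factors $\langle r\rangle^\gamma$ since $\gamma < d/q - 1$, indeed $\gamma \le d/q-1$ suffices because one keeps the extra $\varepsilon$ and the $\langle t\rangle^\gamma$ out front — yields the bound \eqref{lemL2diffconc}.

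I would organize the induction exactly as in the proof of Lemma \ref{lemL2gamma}: one shows by induction on $\lfloor\gamma\rfloor$ (peeling off one power of $\langle x\rangle$ at a time, trading it against $\langle t\rangle$ and a gain in the energy weight $\langle H\rangle^{\ell/2}$ via \eqref{eq:induc3}) that $\|\langle H\rangle^{\ell/2}\langle x\rangle^{\gamma'}(U_t^\psi - U_t^\varphi)f\|$ obeys the stated estimate for all admissible pairs $(\ell,\gamma')$, the base case $\gamma' = 0$ being Lemma \ref{lemHsdiff} (with $H^s$ replaced by $H^\gamma$, which holds since $\gamma \le s$). The commutators $[\langle x\rangle, U_t^\psi]$ and the commutators of $\langle x\rangle$ with $\langle H\rangle^{\ell/2}$ are handled verbatim as in the linear estimate, since $H_t^\psi$ differs from $H$ only by a bounded, spatially-decaying potential with the right derivative bounds (Lemma \ref{lem:Wt2}, \eqref{eq:Wt_Hs2}).

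The main obstacle I expect is bookkeeping rather than conceptual: one must carefully track how the single factor ${\| \psi_0 - \varphi_0 \|}_{L^{p'}\cap H^\gamma}$ propagates through the induction without being squared or lost, which forces one to always keep it attached to the $W_r^\psi - W_r^\varphi$ factor and to use the non-difference mapping bounds (Lemmas \ref{lemHs}, \ref{lemL2gamma}) for the flows flanking it — rather than naively iterating the difference estimate. A secondary subtlety is that the weighted estimate for $U_r^\varphi f$ costs $\langle r\rangle^\gamma$, so in the integrand one meets $\langle r\rangle^{-d/q}\langle r\rangle^{\gamma}$ plus, from the commutator expansion of $\langle x\rangle^\gamma U_t^\psi (U_r^\psi)^{-1}$, an additional $\langle t\rangle^\gamma$; one checks these all close under $\gamma \le d/q - 1$, which is precisely the hypothesis imposed in the statement.
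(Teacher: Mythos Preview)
Your approach is correct but takes a genuinely different route from the paper. The paper writes the Duhamel formula against the \emph{free} propagator $e^{-i(t-r)H}$,
\[
(u_1)_t - (u_2)_t = -i\int_0^t e^{-i(t-r)H}\big(W_r^\psi (u_1)_r - W_r^\varphi (u_2)_r\big)\,dr,
\]
applies Lemma \ref{lemL2gamma} (with $v=0$) to $e^{-i(t-r)H}$, and then splits the integrand as $(W_r^\psi - W_r^\varphi)(u_1)_r + W_r^\varphi\big((u_1)_r-(u_2)_r\big)$. The second piece reintroduces the unknown $\|(u_1)_r-(u_2)_r\|_{L^2_\gamma}$ with the integrable coefficient $\langle r\rangle^{-d/q}$, and the proof closes by Gronwall. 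You instead use the Duhamel identity with the \emph{full} propagator $U_t^\psi(U_r^\psi)^{-1}$ on the outside, so that only the difference $W_r^\psi - W_r^\varphi$ appears in the integrand and no Gronwall step is needed; the price is that you must apply Lemma \ref{lemL2gamma} (and Lemma \ref{lemHs}) not only to $U_t^\psi$ and $U_r^\varphi$ but also to the backward flow $(U_r^\psi)^{-1}$, which is not literally stated but follows by the identical commutator argument. Both routes yield exactly the same final bound; yours is somewhat cleaner structurally (the factor $\|\psi_0-\varphi_0\|$ is manifest from the start), while the paper's avoids touching the backward flow and stays strictly within the lemmas already recorded. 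Your second paragraph, proposing to rerun the full induction of Lemma \ref{lemL2gamma} for the difference, is more work than necessary: once Lemma \ref{lemL2gamma} is available for $U_t^\psi$, $(U_r^\psi)^{-1}$ and $U_r^\varphi$ separately, composing them as in your first paragraph already suffices.
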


\medskip
\section{Proof of Proposition \ref{prop:ic-ext21}}\label{sec:propic}

In the first part of this proof, we will 
omit the superindex $\psi$ for $W_s^\psi$, $U_{s}^\psi$ and $g_{+}^{\psi}$ and so on,
when there is no risk of confusion. 

Let $0<\varepsilon\ll1$ and $\phi\in \mathcal{B}_\gamma^s(\varepsilon)$.  
Fix $\tilde{\varepsilon}>0$ such that $C_0 \varepsilon \leq \tilde{\varepsilon}\ll1$ for some absolute $C_0>1$ to be determined. 
We will show that the map 
\begin{equation*}
\psi_0\mapsto F_\phi(\psi_0):=g_+^{\Psi (\psi_0)} (H)\phi
\end{equation*}
is a contraction in $\mathcal{B}_\gamma^s(\tilde{\varepsilon})$. 
Let $\psi_0\in\mathcal{B}_\gamma^s(\tilde{\varepsilon})$. From \eqref{g+expl} we have
\begin{align} \label{gt21}
F_\phi(\psi_0)=g(H)\phi - i\int_0^\infty U_r^{-1} W_r' \, U_r  \phi \, dr, \quad W_r' := [g(H), W_r]. 
\end{align}

\smallskip
With $p$ as in \eqref{pqp1}, by H\"older's inequality we have 
$\| f \|_{L^{p'}} \leq \| \langle x \rangle^{-\gamma} \|_{L^{2q}} \| \langle x \rangle^{\gamma} f \|_{L^2}$,
and, since $\gamma > d/(2q)$, $L^2_\gamma \subset L^{p'}$.
Therefore, using Theorem \ref{thm:HE-exist}, for any given $\psi_0 \in \mathcal{B}_\gamma^s(\tilde{\varepsilon})$ 
we can construct 
a unique global solution $\psi_t$ to \eqref{HE} satisfying \eqref{Hs-ests}-\eqref{pq-ests}.

\def\jnab{\lan \nabla \ran}
\def\jH{\lan H \ran}

\medskip
{\it Boundedness on $H^s$.}
We begin by proving the bound on $H^s$. We want to show
\begin{equation}\label{eq:g(H)-est0}
\big\| g_+(H) \phi \big\|_{H^s} \ls \| \phi\|_{H^s}.
\end{equation}
Since \eqref{eq:g(H)-est0} obviously holds true with $g$ instead of $g_+$,
by \eqref{gt21} it suffices to prove that
\begin{equation}\label{eq:int00}
\big \| U_{r}^{-1} \,  g(H) W_r  U_{r} \big\|_{H^s} \lesssim \langle r \rangle^{-d/(2q)}\|\phi\|_{H^s},
\end{equation}
and use that $d/(2q) > 1$.
Note that we are writing $g(H)W_r$ instead of the full commutator $W'_r$ from \eqref{gt21}.
We will adopt a similar convention in the rest of the proofs in this section.
The estimate \eqref{eq:int00} exchanging the position of $g(H)$ and $W_r$ can be obtained in the same way
(since, in particular, we will not make use of the fact that the projection $g(H)$ is bounded from $H^s$ to $L^2$
in what follows).

By Lemma \ref{lemHs}, we have ${\big\| U_t^\psi \big\|}_{H^s\mapsto H^s} \lesssim1$, while Lemma \ref{lem:Wt2} gives
$\big\|W_r^\psi\big\|_{H^s\mapsto H^s} \lesssim \tilde\varepsilon \langle r \rangle^{-d/(2q)}$. 
This implies \eqref{eq:int00}.

\medskip
{\it Boundedness on $L^2_\gamma$}.
Next, we prove boundedness on $L^2_\gamma$,
that is,
\begin{equation}\label{eq:g(H)-est}
\big\|  
   g_+(H) \phi \big\|_{L^2_\gamma} \ls \|\phi\|_{L^2_\gamma\cap H^\gamma}, \qquad \phi \in L^2_\gamma \cap H^\gamma.
\end{equation}
Since it is not difficult to show the necessary estimate
for $g(H)$, by \eqref{gt21} it suffices to prove that, for any $\phi \in L^2_\gamma \cap H^\gamma$, we have
\begin{equation}\label{eq:int}
\big\| U_r^{-1} g(H)W_r \, U_r  \, \phi \, \big\|_{L^2_\gamma} \ls \lan r \ran^{\gamma - d/q} \|\phi\|_{L^2_\gamma\cap H^\gamma},
\end{equation}
and then use $\gamma < d/q-1$ so that the above bound is integrable.
Note that we are once again just working with $g(H)W_r$ instead of the commutator.

First, using Lemma \ref{lemL2gamma}, we obtain
\begin{equation}\label{eq:int_a}
\big\| U_r^{-1} g(H)W_r \, U_r  \, \phi \, \big\|_{L^2_\gamma} \ls \lan r \ran^\gamma \big \| g(H)W_r  U_r  \phi  \big\|_{H^\gamma}+\big \| g(H)W_r U_r  \phi  \big\|_{L^2_\gamma}.
\end{equation}
For the first term, we use Lemmas \ref{lem:Wt2} and \ref{lemHs}, which yield
\begin{equation}\label{eq:int_b}
\big \| g(H)W_r  U_r  \phi  \big\|_{H^\gamma}\lesssim \tilde\varepsilon^2 \langle r\rangle^{-d/q}\|\phi\|_{H^\gamma}.
\end{equation}
For the second term, since $g(H):L^2_\gamma\to L^2_\gamma$ is bounded, we can write
\begin{align}
\big \| g(H)W_r U_r  \phi  \big\|_{L^2_\gamma} &\lesssim \|W_r\|_{L^\infty} \big \| U_r  \phi  \big\|_{L^2_\gamma} \notag \\
&\lesssim \tilde\varepsilon^2\langle r\rangle^{-d/q} \big( \langle r\rangle^\gamma\|\phi\|_{H^\gamma}+\|\phi\|_{L^2_\gamma}\big), \label{eq:int_c}
\end{align}
having used Lemmas \ref{lem:Wt} and \ref{lemL2gamma} in the second inequality. Equations \eqref{eq:int_a}, \eqref{eq:int_b} and \eqref{eq:int_c} imply \eqref{eq:int} and therefore \eqref{eq:g(H)-est}.

\medskip
{\it Contraction.}
To prove that $F_\phi$ is contractive, we use arguments that are similar to those above,
but we now need to apply them to the difference
$F_\phi(\varphi_0) - F_\phi(\psi_0)$, for data $\psi_0, \varphi_0 \in L^2_\gamma\cap H^s$.
Let us denote by $\psi_t$ and $\varphi_t$ the respective global solutions guaranteed by Theorem \ref{thm:HE-exist}.

We skip the estimate for the Sobolev norm since it is easier, and concentrate on estimating the $L^2_\g$ norm.
We restore the superindex $\psi$ for  $H_t^\psi, W_t^\psi$, $U_t^\psi$ and so on.
For $\psi_0, \varphi_0 \in \mathcal{B}^s_\gamma(\tilde \varepsilon) \subset L^2_\gamma\cap H^s$, and $\phi \in \mathcal{B}^s_\gamma(\varepsilon)$,
we estimate first
\begin{align}
\label{contmain0}
& \big \|F_\phi(\varphi_0) - F_\phi(\psi_0) \big \|_{L^2_\gamma} \leq D_1 + D_2 + D_3 + \mbox{`similar'},
  \\
\label{contmain01}
& D_1 := \int_0^\infty \big \| (U_r^\varphi)^{-1} \, g(H) W_{r}^{\varphi} \,
	\big(U_r^\varphi - U_r^\psi \big) 
	\phi \big\|_{L^2_\gamma} \, dr,
\\
\label{contmain02}
& D_2 := \int_0^\infty \big \|  (U_r^\varphi)^{-1} g(H) 
  \big(W_{r}^{\varphi}- W_{r}^{\psi} \big) 
  U_r^\psi 
  \phi \big\|_{L^2_\gamma} \, dr,
\\
\label{contmain03}
& D_3 := \int_0^\infty \big \|  \big( (U_r^\varphi)^{-1} - (U_r^\psi)^{-1} \big) g(H) W_{r}^{\psi} 
	U_r \phi \big\|_{L^2_\gamma} \, dr,
\end{align}
where we are again only looking at terms with $g(H) W_r$ and can disregard the `similar' ones with $W_r g(H)$.
We then want to prove
\begin{align}\label{contmain1}
D_1, \, D_2, \, D_3 \lesssim \varepsilon \, \tilde{\varepsilon} \| \psi_0 - \varphi_0 \|_{L^2_\gamma \cap H^s}.
\end{align}

The terms $D_1$ and $D_3$ can be estimated similarly so we just focus on the first. Using Lemma \ref{lemL2gamma}, we obtain
\begin{align}
&\big \| (U_r^\varphi)^{-1} \, g(H) W_{r}^{\varphi} \big(U_r^\varphi - U_r^\psi \big) \phi \big\|_{L^2_\gamma} \notag \\
&\ls \lan r \ran^\gamma \big \| g(H) W_{r}^{\varphi} \big(U_r^\varphi - U_r^\psi \big) \phi  \big\|_{H^\gamma}+\big \| g(H) W_{r}^{\varphi} \big(U_r^\varphi - U_r^\psi \big) \phi  \big\|_{L^2_\gamma}. \label{eq:int_a1}
\end{align} 
To estimate the first term in the rhs of \eqref{eq:int_a1}, we use Lemmas \ref{lem:Wt2} and \ref{lemHsdiff} yielding
\begin{align}
 \big \| g(H) W_{r}^{\varphi} \big(U_r^\varphi - U_r^\psi \big) \phi  \big\|_{H^\gamma} \lesssim \tilde\varepsilon^2 \langle r\rangle^{-d/q} \|\psi_0-\varphi_0\|_{L^{p'}\cap H^\gamma} \|\phi\|_{H^\gamma}. \label{eq:int_a2}
\end{align} 
Since $g(H):L^2_\gamma\to L^2_\gamma$ is bounded, the second term in the rhs of \eqref{eq:int_a1} can be estimated by
\begin{align}
&\big \| g(H) W_{r}^{\varphi} \big(U_r^\varphi - U_r^\psi \big) \phi  \big\|_{L^2_\gamma} \notag \\
&\lesssim \| W_{r}^{\varphi} \|_{L^\infty} \big \| \big(U_r^\varphi - U_r^\psi \big) \phi  \big\|_{L^2_\gamma} \notag \\
&\lesssim \langle r\rangle^{-d/q} \tilde\varepsilon^2{\| \psi_0 - \varphi_0 \|}_{L^{p'} \cap H^\gamma} \big ( \langle r\rangle^\gamma  \|\phi\|_{H^\gamma} + \|\phi\|_{L^2_\gamma} \big), \label{eq:int_a3}
\end{align} 
the second inequality being a consequence of Lemmas \ref{lem:Wt} and \ref{lemL2diff}. 
Inserting \eqref{eq:int_a2} and \eqref{eq:int_a3} into \eqref{eq:int_a1} gives
\begin{align}
&\big \| (U_r^\varphi)^{-1} \, g(H) W_{r}^{\varphi} \big(U_r^\varphi - U_r^\psi \big) \phi \big\|_{L^2_\gamma} \notag \\
&\lesssim \langle r\rangle^{-d/q+\gamma} \tilde\varepsilon^2{\| \psi_0 - \varphi_0 \|}_{L^{p'} \cap H^\gamma} \|\phi\|_{L^2_\gamma\cap H^\gamma} . \label{eq:int_a4}
\end{align} 
Therefore, since $\gamma<d/q-1$ and $\|\phi\|_{L^2_\gamma\cap H^\gamma}\le\varepsilon$, we have shown that
\begin{align}\label{contmain1a}
D_1 \lesssim \varepsilon \tilde{\varepsilon}^2 \| \psi_0 - \varphi_0 \|_{L^2_\gamma \cap H^\gamma}.
\end{align}
The same bound holds for $D_3$. 

To estimate $D_2$, we write using Lemma \ref{lemL2gamma}
\begin{align}
&\big \|  (U_r^\varphi)^{-1} g(H) \big(W_{r}^{\varphi}- W_{r}^{\psi} \big) U_r^\psi \phi \big\|_{L^2_\gamma}  \notag \\
&\lesssim \langle r\rangle^\gamma \big \| \big(W_{r}^{\varphi}- W_{r}^{\psi} \big) U_r^\psi \phi \big\|_{H^\gamma} +\big \| \big(W_{r}^{\varphi}- W_{r}^{\psi} \big) U_r^\psi \phi \big\|_{L^2_\gamma}. \label{boundD2}
\end{align}
The first term is estimated using Lemma \ref{lemWtdiff}, which gives
\begin{align}
\big \| \big(W_{r}^{\varphi}- W_{r}^{\psi} \big) U_r^\psi \phi \big\|_{H^\gamma} &\lesssim \tilde\varepsilon \langle r \rangle^{-d/q} {\| \psi_0 - \varphi_0 \|}_{L^{p'} \cap H^\gamma} \big\| U_r^\psi \phi \big\|_{H^\gamma} \notag \\
&\lesssim \tilde\varepsilon \langle r \rangle^{-d/q} {\| \psi_0 - \varphi_0 \|}_{L^{p'} \cap H^\gamma} \| \phi \|_{H^\gamma}, \label{boundD2a}
\end{align}
the second inequality following from Lemma \ref{lemHs}. The second term in the rhs of \eqref{boundD2} is estimated as
\begin{align}
&\big \| \big(W_{r}^{\varphi}- W_{r}^{\psi} \big) U_r^\psi \phi \big\|_{L^2_\gamma} \notag \\
&\lesssim \big\|W_r^\varphi-W_r^\psi\big\|_{L^\infty} \big\| U_r^\psi \phi \big\|_{L^2_\gamma} \notag \\
&\lesssim \tilde\varepsilon \langle r \rangle^{-d/q} {\| \psi_0 - \varphi_0 \|}_{L^{p'} \cap H^\gamma} \big( \langle r\rangle^\gamma\| \phi \|_{H^\gamma}+\|\phi\|_{L^2_\gamma}\big), \label{boundD2b}
\end{align}
where we have used Lemmas \ref{lemWtdiff} and \ref{lemL2gamma}  to obtain the second inequality. Plugging \eqref{boundD2a} and \eqref{boundD2b} into \eqref{boundD2} gives
\begin{align}
&\big \|  (U_r^\varphi)^{-1} g(H) \big(W_{r}^{\varphi}- W_{r}^{\psi} \big) U_r^\psi \phi \big\|_{L^2_\gamma} \lesssim \varepsilon \tilde\varepsilon \langle r \rangle^{-d/q} {\| \psi_0 - \varphi_0 \|}_{L^{p'} \cap H^\gamma} ,
\end{align}
since $\| \phi \|_{L^2_\gamma\cap H^\gamma}\le\varepsilon$, and therefore
\begin{align}\label{contmain1b}
D_2 \lesssim \varepsilon \tilde{\varepsilon} \| \psi_0 - \varphi_0 \|_{L^2_\gamma \cap H^\gamma}.
\end{align}

Hence, since $s\geq \gamma$, we have proven \eqref{contmain1}, which implies
\begin{align}\label{contmain1'}
\big \| F_\phi(\varphi_0) - F_\phi(\psi_0) \big \|_{L^2_\gamma} &
\lesssim \tilde{\varepsilon} \varepsilon \big \| \varphi_0 - \psi_0 \big\|_{L^2_\gamma\cap H^s}.
\end{align}

The analogous estimate for the Sobolev norm, that is,
\begin{align}\label{contmain2}
\big \| F_\phi(\varphi_0) - F_\phi(\psi_0) \big \|_{H^s} & 
  \lesssim \tilde{\varepsilon} \varepsilon \big \| \varphi_0 - \psi_0 \big \|_{L^2_\gamma\cap H^s},
\end{align}
can be obtained similarly, and is in fact easier to show. 
Since $\tilde{\varepsilon}\ll1$, \eqref{contmain1'}-\eqref{contmain2} imply that $F_\phi$ is a contraction.

\medskip
{\it Injectivity.}
Finally, we verify that the map $\phi \mapsto \psi_0 = g_+^\psi(H) \phi$ is injective on $g(H)\mathcal{B}^s_\gamma(\varepsilon)$. 
Indeed, assume that for $\ell=1,2$ we have $\phi_\ell \in \mathcal{B}^s_\gamma(\varepsilon)$ with $g(H)\phi_1 \neq g(H)\phi_2$, 
and let  $\psi_{0,\ell} \in \mathcal{B}^s_\gamma(C_0\varepsilon)$ 
be the (unique) solutions of
$\psi_{0,\ell} = g_+^{\psi_\ell}(H) \phi_\ell$ with $\psi_\ell = U_t^{\psi_\ell}\psi_{0,\ell}$.
Then, from \eqref{gt21} and arguments similar to those above, we can estimate 
\begin{align*}
& {\big\| g_+^{\psi_1}(H) \phi_1 - g_+^{\psi_2}(H) \phi_2 \big\|}_{H^s} 
\\ & \geq {\big\| g(H) (\phi_1-\phi_2) \big\|}_{H^s} - C\varepsilon^2 
  \big[ {\| \phi_1-\phi_2\|}_{H^s} + {\| \psi_1-\psi_2\|}_{H^s \cap L^2_\gamma} \big].
\end{align*}
This concludes the proof. $\hfill \Box$

\medskip
\appendix

\section{Proof of Proposition \ref{prop:condV}}\label{app:condV}

\begin{proof}[Proof of Proposition \ref{prop:condV}]
Let $V=V_+-V_-$ be such that \eqref{Aas2-1} holds.
We first prove that $-\frac12\Delta+V_+$ does not have nonpositive eigenvalues nor a resonance at $0$. Clearly, since $-\frac12\Delta+V_+\ge0$, its spectrum is contained in $\R_+$. We show that $0$ is not an eingenvalue nor a resonance of $-\frac12\Delta+V_+$.

Suppose that $\phi\in \cap_{\gamma>\frac12} L^2_{-\gamma}$ is a solution to $(-\frac12\Delta+V_+)\phi=0$.  
Since $V_+^{\frac12}(-\Delta)^{-1}V_+^{\frac12}$ is a bounded operator 
in $L^2$ by the assumption \eqref{Aas2-1}, this implies that 
\begin{equation*}
V_+^{\frac12}\phi = -2V_+^{\frac12}(-\Delta)^{-1}V_+\phi.
\end{equation*}
Taking the scalar product with $V_+^{\frac12}\phi$ gives
\begin{equation*}
\big\|V_+^\frac12\phi\big\|^2_{L^2} = -2\langle\phi,V_+(-\Delta)^{-1}V_+\phi\rangle_{L^2}.
\end{equation*}
Since in addition $V_+^{\frac12}(-\Delta)^{-1}V_+^{\frac12}$ is nonnegative, this shows that $V_+^{\frac12}\phi=0$. Hence $(-\frac12\Delta)\phi=0$, which implies that $\phi=0$. Thus $-\frac12\Delta+V_+$ does not have nonpositive eigenvalues nor a resonance at $0$.

Next we show that $-\frac12\Delta+V$ does not have negative eigenvalues. Let $\lambda>0$. Let $\phi\in L^2$ be such that $(-\frac12\Delta+V+\lambda)\phi=0$. As above, since $(-\frac12\Delta+V_++\lambda)$ is invertible, this implies that
\begin{equation}\label{BS1}
\big\|V_-^\frac12\phi\big\|^2_{L^2} = \langle\phi,V_-(-\frac12\Delta+V_++\lambda)^{-1}V_-\phi\rangle_{L^2}.
\end{equation}
We have $-\frac12\Delta+V_++\lambda\ge-\frac12\Delta+\lambda$ and hence, since both operators are invertible and $-\frac12\Delta+\lambda$ is positive,
\begin{equation}\label{eq:s1}
\big(-\frac12\Delta+V_++\lambda\big)^{-1}\le\big(-\frac12\Delta+\lambda\big)^{-1}.
\end{equation}
Indeed, for any self-adjoint, invertible operators $A$ and $B$ having the same domain and satisfying $A\le B$ and $A>0$, we have $B^{-1/2}AB^{-1/2}\le 1$, which implies $B^{1/2}A^{-1}B^{1/2}\ge 1$ giving $A^{-1}\ge B^{-1}$. Relation \eqref{eq:s1} together with \eqref{BS1}, implies
\begin{equation}\label{BS-est1}
\big\|V_-^\frac12\phi\big\|^2_{L^2} \le \langle\phi,V_-(-\frac12\Delta+\lambda)^{-1}V_-\phi\rangle_{L^2}.
\end{equation}
Due to the assumption $\langle x\rangle^{\alpha}V_-(x)\le\delta$, this yields
\begin{align}\label{BS-est2_ab}
\big\|V_-^\frac12\phi\big\|^2_{L^2} &\le 
  \big\langle \langle x\rangle^{\frac{\alpha}{2}} V_-\phi , \langle x\rangle^{-\frac{\alpha}{2}} 
  (-\frac12\Delta+\lambda)^{-1} \langle x\rangle^{-\frac{\alpha}{2}}\langle x\rangle^{\frac{\alpha}{2}}
  V_-\phi\big\rangle_{L^2}
  \\
& \le\delta \big\|V_-^{\frac12}\phi\big\|^2_{L^2} \big\|\langle x\rangle^{-\frac{\alpha}{2}} 
  (-\frac12\Delta+\lambda)^{-1}\langle x\rangle^{-\frac{\alpha}{2}} \big\|.
\end{align}

Since $\alpha>2$, the operator $\langle x\rangle^{-\frac{\alpha}{2}} 
(-\frac12\Delta+\lambda)^{-1}\langle x\rangle^{-\frac{\alpha}{2}}:L^2\to L^2$ 
is bounded uniformly in $\lambda\ge0$. Hence, for $\delta$ small enough, 
we deduce that $V_-\phi=0$. Therefore $(-\frac12\Delta+V_++\lambda)\phi=0$ 
which yields $\phi=0$ since we know that $-\frac12\Delta+V_+$ does not have negative eigenvalues.

Now we show that $0$ is neither an eigenvalue nor a resonance of $-\frac12\Delta+V$. Suppose that $\phi\in \cap_{\gamma>\frac12} L^2_{-\gamma}$ is a solution to $(-\frac12\Delta+V)\phi=0$. Letting $\lambda\to0$ in \eqref{eq:s1} shows that $(-\frac12\Delta+V_+)^{-1}:L^2_\gamma\to L^2_{-\gamma}$ is bounded for $\gamma>1$. Hence, using \eqref{Aas2-1}, we see that the equation $(-\frac12\Delta+V)\phi=0$ implies $V_-^{\frac12}\phi=V_-^{\frac12}(-\frac12\Delta+V_+)^{-1}V_-\phi$. Taking the scalar product with $V_-^{\frac12}\phi$ and using $V_-(x)\le\delta\langle x\rangle^{-\alpha}$ we obtain
\begin{align*}
\big\|V_-^\frac12\phi\big\|^2_{L^2} &\le\delta^2\big\|V_-^\frac12\phi\big\|^2_{L^2} \big\|\langle x\rangle^{-\frac{\alpha}{2}} (-\frac12\Delta+V_+)^{-1}\langle x\rangle^{-\frac{\alpha}{2}} \big\|.
\end{align*}
For $\delta$ small enough, we can conclude that $V_-\phi=0$. Therefore $(-\frac12\Delta+V_+)\phi=0$, which yields that $\phi=0$ since we know that $0$ is not a resonance of $-\frac12\Delta+V_+$.

It remains to prove that $-\frac12\Delta+V$ doest not have positive eigenvalues. This is a standard result given the condition \eqref{Aas2-1} (see e.g. \cite[Theorem XIII.58]{RS4}).
\end{proof}

\section{Proof of Theorem \ref{thm:HE-exist}}\label{sec:HE-exist}

To prove Theorem \ref{thm:HE-exist}, we use  
 standard arguments, combining energy and decay estimates. Recall that global existence in $L^2(\mathbb{R}^d)$ of solutions 
satisfying \eqref{L2-ests} is standard (see e.g. \cite{Caz}). 
Local existence in $H^s(\mathbb{R}^d)$ is also standard, see e.g. \cite[Theorem 4.10.1]{Caz}, for $s>d/(2q)$, an integer. 
We prove it here for convenience of the reader as the proof under our conditions 
is simpler than that of \cite{Caz} which is done for fairly general nonlinearities. 
We 
then bootstrap it to the global existence. 
We also use some of the estimates, or variants of them, in Section \ref{ssec:use}.

As is standard in the local existence proofs, 
we use the Duhamel principle to rewrite the Hartree equation \eqref{HE} as a fixed point problem 
\begin{align}\label{HEfp}
 \psi_{t}= G_{t}(\psi), \quad G_{t}(\psi):=e^{-iHt}\psi_{0} -i  \int_{0}^{t} e^{-iH(t-r)}W^{\psi}_{r}\psi_{r}  dr,
\end{align}
(recall the definition of the nonlinear potential $W^\psi_t$ from \eqref{defW_t})
and then use the contraction mapping principle to prove the existence of a unique fixed point in a ball in $H^{s}$.

By time-reversal symmetry we may assume $t\geq 0$.
Elementary estimates under Condition \eqref{Aas1} show
 the equivalence of the norms $\| \psi \|_{H^s}$ and $\big \| (H+C)^{s/2}\psi \big \|_{L^2}$, where $C\ge -\inf H+1$, which   
  yields the bound
\begin{align}\label{propagHsbound}
& {\big\| e^{itH} f \big\|}_{H^s} \lesssim {\| f \|}_{H^s}.
\end{align}

Using definition \eqref{HEfp} of $G$ and estimate \eqref{propagHsbound}, we find right away
 for all $t \in [0,T]$:
\begin{align}\label{fp-bound'}
\big \| G_{t}(\psi) \big \|_{H^s}& \leq \big \| \psi_{0} \big \|_{H^s}  +
	 \big \| W^{\psi}_{t}\psi_{t} \big\|_{L^{1}_{t}([0,T])H^s_{x}}.
\end{align}
Applying the  Kato-Ponce inequality (or fractional Leibniz rule) and the weak Young's inequality,
recalling that $p=2q'=2q/(q-1)$, and observing that  
\begin{align}\label{W-estpq}
1/2 = 1/(2q) + 1/p ,  \quad 1+ 1/(2q) = 1/q + 1/p_{1},
\end{align}
where $1/p_{1} = 1/2 + 1/p $, we estimate the $H^{s}$-norms of the last term in \eqref{fp-bound} 
for fixed $t$ as follows 
\begin{align}
\nonumber
\big \|  W^{\psi }_{t}  \psi_{t}\big \|_{H^s} &\lesssim \big\| W^{\psi}_{t}  \big\|_{L^\infty} \big\| \psi_t \big\|_{H^s} + \big\| W^{\psi}_{t}  \big\|_{W^{s,2q}} \big\| \psi_t \big\|_{L^p} 
\\
\nonumber 
& \lesssim \big\| v \big\|_{L^{q,\infty}} \Big( \big\|  |\psi_t|^2 \big\|_{L^{q'}} \big\|\psi_{t}  \big\|_{H^{s}}
	+ \big\|  |\psi_t|^2 \big\|_{ W^{s,p_{1}} }\big \| \psi_t \big \|_{L^{p}} \Big)  \\
\label{W-est}
& \lesssim  \big\| v \big\|_{L^{q,\infty}} \big\| \psi_{t}  \big\|_{L^{p}}^{2} \big\|\psi_{t} \big\|_{H^{s}}.
\end{align}
Now, consider the Banach spaces $ H^{s}_{T} :=L^{\infty}\big ([0,T],H^{s}\big )$ 
and $L^{p}_{T}:=L^{\infty}\big ([0,T],L^{p}\big )$, 
with the norms $\|f\|_{H^{s}_T}:=\sup_{0\le t\le T} \|f(t)\|_{H^{s}}$ 
and $\|f\|_{L^{p}_T}:=\sup_{0\le t\le T} \|f(t)\|_{L^{p}} $, and let  $\psi_{t} \in H^{s}_{T} $ 
such that $\psi_{0} \in  H^s$. 
Then the last two inequalities give, after taking the supremum in $t$ over $[0,T]$,  
\begin{align}\label{fp-bound}
\big\| G_{t}(\psi)\big \|_{H^s_{T}} \lesssim  \big \| \psi_{0} \big \|_{H^s} 
  +  T\big\| v \big\|_{L^{q,\infty}} \big\| \psi_{t}  \big\|_{L^{p}_{T}}^{2} \big\|\psi_{t}  \big\|_{H^s_{T}}.
\end{align}
Hence, since $H^s\hookrightarrow L^{p}$ (as $s>d/(2q)=d(1/2-1/p)$), 
the map $G$ takes the ball $H^s_{T, R}$ in $H^s_{T}$ of the radius $R$ 
centred at the origin into itself, provided $R$ satisfies 
$R\ge C(\big \| \psi_{0} \big \|_{H^s} +  T\big\| v \big\|_{L^{q,\infty}} R^3)$ with $C$ large enough.

Similarly, we estimate the difference $\big\| G_{t}(\psi)  - G_{t}(\phi) \big \|_{H^s_{T}}$:
\[
\big\| G_{t}(\psi)  - G_{t}(\phi) \big \|_{H^s_{T}} \lesssim  
 T\big\| v \big\|_{L^{q,\infty}} \big( \big\| \psi_{t}  \big\|_{H^s_{T}} + \big\| \phi_{t}  
 \big\|_{H^s_{T}}\big)^{2} \big\|\psi_{t}-\phi_{t}  \big\|_{H^s_{T}}.
\]
Hence $G$ is a contraction on $H^s_{T, R}$ provided $R$ and $T$ satisfy 
$R\ge C(\big \| \psi_{0} \big \|_{H^s} +  T\big\| v \big\|_{L^{q,\infty}} R^3)$ 
and $C T\big\| v \big\|_{L^{q,\infty}} R^{2}<1$
for some constant $C>1$ independent of $R$ and $T$. 
This implies local well-posedness in $H^s_{T}$, provided the local time of existence $T>0$ is sufficiently small.

The local existence proven above implies that the bounds in \eqref{Hs-ests} and \eqref{pq-ests}
hold for some finite time. We now bootstrap the  local existence and these bounds to the global existence 
and the global bounds. 

More precisely, we assume, for some $T>0$ and $D$ large enough, 
that the solution $\psi_t$ of \eqref{HE},
with $\|\psi_0\|_{L^{p'} \cap H^s}\le \veps$, satisfies 
\begin{align}  
\label{Hs-estsboot0}
& \sup_{t\in[0,T]} \|\psi_t\|_{H^s} \leq 2D \|\psi_0\|_{H^s}, 
\\
\label{pq-estsboot0}
& \sup_{t\in[0,T]} \big( \langle t \rangle^{d/2q}  \|\psi_t\|_{L^p} \big) \leq 2D\|\psi_0\|_{L^{p'} \cap H^s},
\end{align}
and then show that
\begin{align}  
\label{Hs-estsboot}
& \sup_{t\in[0,T]} \|\psi_t\|_{H^s} \leq D \|\psi_0\|_{H^s}, 
\\
\label{pq-estsboot}
& \sup_{t\in[0,T]} \big(   \langle t \rangle^{d/2q}  \|\psi_t\|_{L^p} \big) \leq D\|\psi_0\|_{L^{p'} \cap H^s}.
\end{align}

To begin with, we mention first that under Conditions \eqref{Aas1} and \eqref{Aas2'}, 
the unitary evolution of the linear part $e^{-iHt}$ of \eqref{HE} 
is bounded from $L^{p'}$ to $L^{p}$ and satisfies the dispersive estimate 
\begin{equation}\label{eq:disp}
\|e^{-iHt} f\|_{L^p} \lesssim t^{-d(\frac{1}{2}-\frac{1}{p})} \|f\|_{L^{p'}} , \quad t >0 ,
\end{equation}
(see for example \cite{Yajima, BeSch}, the introduction of \cite{GHW} and the recent survey \cite{SchSurvey}). 
This estimate, together with  estimate \eqref{propagHsbound} and with the Sobolev embedding 
$H^s\hookrightarrow L^{p}$ (as $s>d(1/2-1/p)$) and the relation 
$1/2-1/p=1/(2q)$ yield the bound
\begin{equation}\label{disp-est-comb}
\|e^{-iHt} f\|_{L^p} \lesssim \lan t\ran^{-d/(2q)} \|f\|_{H^s\cap L^{p'}}. 
\end{equation}

Now, applying estimates \eqref{fp-bound}, 
 \eqref{Hs-estsboot0} and \eqref{pq-estsboot0} to the fixed point equation \eqref{HEfp} gives 
\begin{align*}
\big \| \psi_{t} \big\|_{H^{s}} &\le C\big \|\psi_{0} \big \|_{H^{s}} 
  + 4C D^{3} \big \| v \big \|_{L^{q,\infty}}\big \|\psi_{0} \big \|_{L^{p'} \cap H^{s}}^{2}
  \|\psi_{0} \big \|_{H^{s}} \int_{0}^{t}\langle r \rangle^{-d/q} \; dr 
  \\
&  \le C\varepsilon + D^{3}\tilde{C}\varepsilon^{3}, 
\end{align*}
where the integral converges since $d/q>2$. 
Altogether, this implies \eqref{Hs-estsboot}, provided $C + D^{3}\tilde{C}\varepsilon^{2}\le D$,  
for $D$ sufficiently large. This bound implies also that $\psi_{t} \in C([0, T],H^{s})$.

Let us now prove \eqref{pq-estsboot}.
Applying the $L^p$-norm to the fixed point equation \eqref{HEfp} 
and using estimate \eqref{disp-est-comb}, we obtain, for all $t\in[0,T]$,
\begin{align}\label{Lp-est}
\notag\| \psi_{t} \|_{L^p} &\le \big \| e^{-iHt}\psi_{0} \big\|_{L^p} 
  + \Big \| \int_{0}^{t} e^{-iH(t-r)}W^{\psi}_{r}\psi_{r} dr \Big \|_{L^p}\\
& \le \lan t\ran^{-d/(2q)}\big \| \psi_{0} \big\|_{H^s\cap L^{p'}} 
  +  \int_{0}^{t} \lan t-r\ran^{-d/(2q)}\big \|W^{\psi}_{r}\psi_{r} \big \|_{H^s\cap L^{p'}} dr.\end{align}
Now, observing that $1/p'=1/q+1/p$, using the H\"older estimate $\big \|  W^{\psi }_{t}  \psi_{t}\big \|_{L^{p'}}
  \lesssim  \big\| W^{\psi }_{t} \big\|_{L^{q}} \big\| \psi_{t}  \big\|_{L^{p}}$ 
and then the weak Young one $\big\| W^{\psi }_{t} \big\|_{L^{q}}\ls  
\big\| v \big\|_{L^{q,\infty}} \big\| |\psi_{t}|^2  \big\|_{L^{1}}$, 
together with \eqref{W-est}, \eqref{Lp-est}, \eqref{Hs-estsboot0}, \eqref{pq-estsboot0} and $\|\psi_0\|_{L^{p'} \cap H^s}\le \varepsilon$, gives
\begin{align*}
\| \psi_t \|_{L^p} 
  & \ls \langle t \rangle^{-d/2q} \varepsilon  + \varepsilon^3 D^{3}  
  \int_0^{t} \langle t - r \rangle^{-d/2q}  \langle r \rangle^{-d/2q} dr.
\end{align*}
Since $d/2q > 1$, the integral above is bounded by $C\langle t \rangle^{-d/2q}$ and hence \eqref{pq-estsboot} follows provided we choose $\varepsilon$ so that $1   + \varepsilon^2 D^{3}\ll D$. 

Thus, we have shown \eqref{Hs-estsboot} and \eqref{pq-estsboot} which allows us to iterate the local existence result 
by a standard continuation argument to complete the proof of the theorem. 
$\hfill \Box$

\medskip
\section{Proof of Lemmas 
 \ref{lem:Wt_ab}--\ref{lemL2diff}}\label{ssec:use}

In this section we prove the results stated in Section \ref{sec:mapping} which were used in Section \ref{sec:propic}.
 Some of the arguments used below are similar to those in the proof of Theorem \ref{thm:HE-exist} just given above, so we will skip some details.

\medskip
{\it Notation}: 
As above, we will use in this section $U_t^\psi$ (and similarly $U_t^\varphi$) to denote the flow of 
$H_t^\psi = H + f(|\psi|^2) = -\tfrac{1}{2}\Delta + V + v\ast |\psi|^2$, see \eqref{defW_t},
where $V$ satisfies the conditions  
\eqref{Aas1}-\eqref{Aas2'} 
and $\psi = \psi_t$ is the unique global $H^s$ ($s>d/(2q)$) solution of \eqref{HE}
under the conditions of Theorem \ref{thm:HE-exist};
in particular, we are assuming that the initial data $\psi_0$ satisfies \eqref{ic},
and $\psi_t$ satisfies \eqref{Hs-ests} and \eqref{pq-ests}.
Also, the indexes $p$, $q$ and $p_1$ satisfy the same relations used so far:
\begin{align}\label{relpq}
1/p = 1/2- 1/(2q) 
 \qquad \text{and}  \quad 1/p_1=1/2+1/p\end{align}
($p=\infty$ for $q=1$). 
Recall that we write $L^{1,\infty}\equiv L^1$, $W^{\gamma,(q,\infty)}\equiv W^{\gamma,q}$ in the case where $q=1$. We also recall that Condition \eqref{Aas1} implies the equivalence of the norms $\| \psi \|_{H^s}$ and $\big \| (H+C)^{s/2}\psi \big \|_{L^2}$, where $C\ge -\inf H+1$.
\medskip

\begin{proof}[Proof of Lemma \ref{lem:Wt_ab}]
Using Young's and H{\"o}lder's inequalities, recalling that $1/p_1=1-1/(2q)=1/2+1/p$, we obtain
\begin{equation*}
\big\|W_t^\psi\big\|_{W^{s,2q}}\lesssim\|v\|_{L^{q,\infty}}\big\||\psi_t|^2\big\|_{W^{s,p_1}}
  \lesssim\|v\|_{L^{q,\infty}}\|\psi_t\|_{L^p}\|\psi_t\|_{H^s}.
\end{equation*}
Hence \eqref{eq:Wt-decay_ab} follows from \eqref{Hs-ests}--\eqref{pq-ests} in Theorem \ref{thm:HE-exist}. 
\end{proof}

\begin{proof}[Proof of Lemma \ref{lem:Wt2}]
Let $f\in H^s$, with $s>d(1/2-1/p)=d/(2q)$. Applying the  Kato-Ponce inequality (or fractional Leibniz rule), 
we have
\begin{equation}
\big\|W_t^\psi f\big\|_{H^s}\lesssim \big\| W^{\psi}_{t}  \big\|_{L^\infty} \big\| f\big\|_{H^s} + \big\| W^{\psi}_{t}  \big\|_{W^{s,2q}} \big\| f \big\|_{L^p} .\label{eq:z1}
\end{equation}
Using Sobolev's embedding $H^s(\R^d)\hookrightarrow L^p(\R^d)$, for  $s>
 d/(2q)$, together with Lemmas \ref{lem:Wt} and \ref{lem:Wt_ab}, 
 we obtain \eqref{eq:Wt_Hs}. To prove \eqref{eq:Wt_Hs2}, we write similarly,
\begin{equation*}
\big\|W_t^\psi f\big\|_{H^\gamma}\lesssim \big\| W^{\psi}_{t}  \big\|_{L^\infty} \big\| f\big\|_{H^\gamma} 
  + \big\| W^{\psi}_{t}  \big\|_{W^{\gamma,\infty}} \big\| f \big\|_{L^2} ,
\end{equation*}
and hence the result follows from \eqref{eq:Wt-decay}-\eqref{eq:Wt-decay2} 
of Lemma \ref{lem:Wt} applied to the last two inequalities.
\end{proof}

\begin{proof}[Proof of Lemma \ref{lem:Wt3}]
Using Young's inequality, we write
\begin{align}
\big\|W_t^\psi\big\|_{W^{\sigma,\infty}}&\lesssim\big\|\big(\langle\nabla\rangle^{\sigma_1}v\big)*\big(\langle\nabla\rangle^{\sigma_2} |\psi_t|^2\big)\big\|_{L^\infty}\notag\\
&\lesssim\big\|\langle\nabla\rangle^{\sigma_1}v\big\|_{L^{q,\infty}}\big\|\langle\nabla\rangle^{\sigma_2} \psi_t\big\|_{L^{2q'}}\|\psi_t\|_{L^{2q'}}\notag\\
&=\big\|\langle\nabla\rangle^{\sigma_1}v\big\|_{L^{q,\infty}}\big\|\psi_t\big\|_{W^{\sigma_2,p}}\|\psi_t\|_{L^{p}},\label{eq:h2}
\end{align}
since $p=2q'$. Next, the Gagliardo-Nirenberg-Sobolev inequality gives
\begin{equation}
\|\psi_t\|_{W^{\sigma_2,p}}\lesssim\|\psi_t\|_{H^s}^{1-\beta}\|\psi_t\|_{L^p}^\beta, \label{eq:h3}
\end{equation}
provided that
\begin{equation}\label{eq:h1}
\frac1p=\frac{\sigma_2}{d}+\Big(\frac12-\frac{s}{d}\Big)(1-\beta)+\frac{\beta}{p}, \qquad 0<\beta<1.
\end{equation}
Now, given $\varepsilon'>0$ such that $d/(2q)-\varepsilon'>0$, we choose $\beta$ such that $\frac{d\beta}{2q}=\frac{d}{2q}-\varepsilon'$. The condition \eqref{eq:h1} then yields
\begin{equation*}
s=\frac{d}{2q}+\frac{\sigma_2}{1-\beta}>\frac{d}{2q}+\frac{2d}{d-2q}.
\end{equation*}
Equations \eqref{eq:h2}, \eqref{eq:h3} together with Theorem \ref{thm:HE-exist} imply
\begin{align*}
\big\|W_t^\psi\big\|_{W^{\sigma,\infty}}\lesssim\big\|\langle\nabla\rangle^{\sigma_1}v\big\|_{L^{q,\infty}}\|\psi_t\|^{1+\beta}_{L^{p}}\lesssim\big\|\langle\nabla\rangle^{\sigma_1}v\big\|_{L^{q,\infty}}\langle t\rangle^{(1+\beta)\frac{d}{2q}}.
\end{align*}
This proves the lemma.
\end{proof}

\begin{proof}[Proof of Lemma \ref{lemHs}]
For any $f_0 \in H^s$ we let $f = f_t := U_t^\psi f_0$ and write
\begin{align}\label{HEfp'}
f = e^{-iHt} f_{0} -i\int_{0}^{t} e^{-iH(t-r)}W^{\psi}_{r}f \, dr.
\end{align}
For the integrated term, using $1/p=1/2-1/(2q)$, we estimate
\begin{align}
{\Big\|  \int_{0}^{t} e^{-iH(t-r)}W^{\psi}_{r}f \, dr \Big\|}_{H^s} 
  & \lesssim \int_0^t \Big ( {\big\| W^{\psi}_{r} \big\|}_{L^\infty} {\| f \|}_{H^s} 
  + {\big\| W^{\psi}_{r} \big\|}_{W^{s,2q}} {\| f \|}_{L^p} \Big)
  \, dr  \notag\\
  &\lesssim \varepsilon \int_0^t \langle r\rangle^{-\frac{d}{2q}} \|f\|_{H^s} \, dr,  \label{prHs1}
\end{align}
having used Sobolev embedding and Lemmas \ref{lem:Wt} and \ref{lem:Wt_ab}.
Using \eqref{prHs1} and $\| e^{-iHt}f_0 \|_{H^s} \lesssim \|f_0\|_{H^s}$ (see \eqref{propagHsbound})
in \eqref{HEfp'} we can then obtain \eqref{lemHsconc} by Gronwall's inequality since $d/(2q)>1$. 
\end{proof}

\begin{proof}[Proof of Lemma \ref{lemL2gamma}]

Let $n$ be a nonnegative integer such that $n \le \gamma$ and suppose that $v\in W^{n,(q,\infty)}$.
We first prove by induction that for all $k\in\{0,\dots,n\}$ and $\ell \in \{ 0 , \dots, n-k\}$,
\begin{align}\label{eq:induc}
{\big\|\langle H\rangle^{\frac{\ell}{2}} \langle x\rangle^kU_t^\psi \varphi \big\|} 
	\le C \sum_{j=0}^k \langle t\rangle^j\big\| \langle H\rangle^{\frac{\ell+j}{2}} \langle x\rangle^{k-j}\varphi\big\|. 
\tag{$\mathcal{H}_{k,\ell}$}
\end{align}
Note that \eqref{eq:induc} is a natural statement in view of the 
dispersion relation and the consequent localization property ``$|x|^2 \approx t^2 H$'' for (linear) Schr\"odinger flows.

For $k=0$, $(\mathcal{H}_{0,\ell})$ holds for any $\ell\in \{0,\dots,n\}$ 
as follows from Lemma \ref{lemHs}. Let $k\in\{0,\dots,n-1\}$. 
Suppose that $(\mathcal{H}_{k',\ell})$ holds for all $k'\le k$ and all $\ell \in\{0\dots,n-k'\}$.
First we show that $(\mathcal{H}_{k+1,0})$. Using the relation
\begin{align}\label{comm-id1-app}
[A, U_r] 
=- iU_r \int_0^r  U_{\tau}^{-1}[A,H_{\tau}] U_{\tau} \, d\tau,
\end{align}
we write
\begin{align}
&\big\| \langle x\rangle^{k+1}U_t^\psi \varphi \big\|\notag\\
&\lesssim \big\| \langle x\rangle^{k+1} \varphi \big\| + \int_0^t  \big \|  [\langle x\rangle^{k+1},\Delta] U_{r}^\psi \varphi \big \| \, dr \notag\\
&\lesssim \big\| \langle x\rangle^{k+1} \varphi \big\| + \int_0^t \big( \big \| \langle \nabla\rangle \langle x\rangle^{k}U_{r}^\psi \varphi \big \|+\big\|\langle x\rangle^{k-1} U_{r}^\psi \varphi \big \|\big) \, dr \notag\\
&\lesssim \big\| \langle x\rangle^{k+1} \varphi \big\| + \int_0^t  \big( \big \| \langle H\rangle^{\frac12} \langle x\rangle^{k} U_{r}^\psi \varphi \big \| +\big\|\langle x\rangle^{k-1}U_{r}^\psi \varphi \big \| \big) \, dr \notag
\\
\notag
&\lesssim \big\| \langle x\rangle^{k+1} \varphi \big\| +  \sum_{j=0}^k \int_0^t \langle r\rangle^j\big\| 
\langle H\rangle^{\frac{1+j}{2}}
\langle x\rangle^{k-j}\varphi\big\| \, dr
\\
& \lesssim
\big\| \langle x\rangle^{k+1} \varphi \big\| +  \sum_{j=0}^k \langle r\rangle^{j+1}
\big\| \langle H\rangle^{\frac{1+j}{2}} \langle x\rangle^{k-j}\varphi\big\|, 
\label{eq:g1}
\end{align}
where we used the induction hypothesis in the inequality before last. 
This easily implies that $(\mathcal{H}_{k+1,0})$ holds. Next, let $\ell\in\{0,\dots,n-(k+2)\}$. 
Assuming in addition that $(\mathcal{H}_{k+1,\ell'})$ holds for all $\ell'\le\ell$, we show that $(\mathcal{H}_{k+1,\ell+1})$ holds. As above, we write
\begin{align}
&\big\|\langle H\rangle^{\frac{\ell+1}{2}} \langle x\rangle^{k+1}U_t^\psi \varphi \big\| \notag\\
&\lesssim \big\|\langle H\rangle^{\frac{\ell+1}{2}} \langle x\rangle^{k+1} \varphi \big\|
+ \int_0^t  \big \| [\langle H\rangle^{\frac{\ell+1}{2}}\langle x\rangle^{k+1},H_{\tau}^\psi] U_{r}^\psi \varphi \big \| \, dr \notag\\
&\lesssim \big\|\langle H\rangle^{\frac{\ell+1}{2}} \langle x\rangle^{k+1} \varphi \big\|
+ \int_0^t  \big \| \langle H\rangle^{\frac{\ell+1}{2}} [\langle x\rangle^{k+1},\Delta] U_{r}^\psi \varphi \big \| \, dr \notag\\
&\quad+ \int_0^t \big \| [ \langle H\rangle^{\frac{\ell+1}{2}} , W_r^\psi ] \langle x\rangle^{k+1} U_{r}^\psi \varphi \big \| \, dr. \label{eq:t1}
\end{align}
For the first integrated term, one verifies that
\begin{align}
&\big \| \langle H\rangle^{\frac{\ell+1}{2}} [\langle x\rangle^{k+1},\Delta] U_{r}^\psi \varphi \big \| \notag\\
&\qquad  \qquad \lesssim \big \| \langle H\rangle^{\frac{\ell+2}{2}} \langle x\rangle^{k} U_{r}^\psi \varphi \big \|+\big\| \langle H\rangle^{\frac{\ell+1}{2}}\langle x\rangle^{k-1} U_{r}^\psi \varphi \big \|. \label{eq:t2}
\end{align}
The second one can be estimated by
\begin{align}
\big \| [ \langle H\rangle^{\frac{\ell+1}{2}} , W_r^\psi ] \langle x\rangle^{k+1} U_{r}^\psi \varphi \big \| &\lesssim \sum_{\ell'=0}^\ell\big\|W_r^\psi\big\|_{W^{\ell-\ell'+1,\infty}}\big\|\langle H\rangle^{\frac{\ell'}{2}}\langle x\rangle^{k+1} U_{r}^\psi \varphi \big \|\notag \\
&\lesssim  \langle r\rangle^{-\frac{d}{q}} \sum_{\ell'=0}^\ell\big\|\langle H\rangle^{\frac{\ell'}{2}}\langle x\rangle^{k+1} U_{r}^\psi \varphi \big \|,\label{eq:t3}
\end{align}
where the last inequality follows from Lemma \ref{lem:Wt}. Inserting \eqref{eq:t2} and \eqref{eq:t3} into \eqref{eq:t1} and using the induction hypothesis, we obtain that
\begin{align}
&\big\|\langle H\rangle^{\frac{\ell+1}{2}} \langle x\rangle^{k+1}U_t^\psi \varphi \big\| \notag\\
&\qquad\lesssim \big\|\langle H\rangle^{\frac{\ell+1}{2}} \langle x\rangle^{k+1} \varphi \big\|
+ \sum_{j=0}^k \int_0^t \langle r\rangle^j\big\|\langle H\rangle^{\frac{n-k+j}{2}}\langle x\rangle^{k-j}\varphi\big\| \, dr \notag\\
&\qquad\qquad+ \sum_{j=0}^k \int_0^t \langle r\rangle^{-\frac{d}{q}}\langle r\rangle^j\big\|\langle H\rangle^{\frac{n-k+j}{2}}\langle x\rangle^{k+1-j}\varphi\big\| \, dr. \label{eq:t4}
\end{align}
Since $\langle r\rangle^{-d/q+j}$ is integrable for all $j\le n$ (since $n \le \gamma < d/q-1$), 
one deduces from the previous estimate that $(\mathcal{H}_{k+1,\ell+1})$ holds.

Thus, we have proven that \eqref{eq:induc} holds for all $k\in\{0,\dots,n\}$ 
and $\ell \in \{ 0 , \dots , n-k\}$.

Next, we claim that, for all $\ell \in \{ 0 , \dots , n-k\}$ we have
\begin{align}\label{eq:induc2}
\big\|\langle H\rangle^{\frac{\ell}{2}} \langle x\rangle^kU_t^\psi \varphi \big\| 
	\lesssim \langle t\rangle^k \big\|\langle H\rangle^{\frac{k+\ell}{2}}\varphi\big\|
	+ \|\langle H\rangle^{\frac{\ell}{2}}\langle x\rangle^k\varphi\|,
\end{align}
for all nonnegative integers $\ell,k$ such that $v\in W^{\ell+k,(q,\infty)}$.
It suffices to bound each term in the sum on the right-hand side of \eqref{eq:induc} by the
right-hand side of \eqref{eq:induc2} (which is equivalent to the first and last terms in the sum in \eqref{eq:induc});
that is, it suffices to prove that
for all $j=0,\dots k$
\begin{align}\label{eq:induc2'}
\langle t\rangle^j\big\| \langle H\rangle^{\frac{\ell+j}{2}} \langle x\rangle^{k-j}\varphi\big\|
  \ls \langle t\rangle^k \big\|\langle H\rangle^{\frac{k+\ell}{2}}\varphi\big\|
  + \|\langle H\rangle^{\frac{\ell}{2}}\langle x\rangle^k\varphi\|.
\end{align}
Recall that, under our assumptions on $V$,
we have the equivalence of the Sobolev norms 
\begin{align}\label{Sobequiv}
{\| \langle H \rangle^{s/2} f \|}_{L^2} \approx {\| \langle \nabla \rangle^s f \|}_{L^2} = {\| f \|}_{H^s}.
\end{align}
Then, we 
square \eqref{eq:induc2'} to see that it is equivalent to
\begin{align}\label{eq:induc2''}
\langle t\rangle^{2j} {\big\| \langle x\rangle^{k-j}\varphi\big\|}_{H^{\ell+j}}^2
	 \ls \langle t\rangle^{2k} {\big\| \varphi \big\|}_{H^{\ell+k}}^2
	+ {\| \langle x\rangle^k\varphi\|}_{H^{\ell}}^2, \qquad 0\leq j\leq k.
\end{align}
On the standard $H^s$ space we can now 
use the (inhomogeneous) Littlewood-Paley decomposition $f = \sum_{N\geq 0} P_N f$, 
with $\widehat{P_N f}(\xi) := \chi_N(\xi) \widehat{f}(\xi)$
where $\chi_N$ is a bump function supported on $|\xi| \in [2^{N-1}, 2^{N+1}]$ for $N>0$,
and compactly supported in $[-2,2]$ for $N=0$.
Recall that 
\begin{align}\label{HsLP}
{\| f \|}_{H^s}^2 \approx \sum_{N\geq 0} 2^{2Ns} {\| P_N f \|}^2_{L^2} ;
\end{align}
moreover, by commuting $P_N$ and $\jx$, we can see that
\begin{align}\label{HsLPx}
{\| P_N \langle x \rangle f \|}_{L^2} \approx {\| \langle x \rangle P_N f \|}_{L^2},
\end{align}
having slightly abused notation by disregarding similar terms 
with $P_{N-1}$ and  $P_{N+1}$ instead of $P_N$ on the right-hand side. 
Then, we write
\begin{align} 
\nonumber
\langle t\rangle^{2j} {\big\| \langle x\rangle^{k-j}\varphi \big\|}_{H^{\ell+j}}^2
  & \approx \langle t\rangle^{2j} \sum_{N\geq 0} 2^{2N(\ell+j)} {\|  P_N \langle x\rangle^{k-j}\varphi \|}^2_{L^2}
  \\
  & \lesssim \langle t\rangle^{2j} \sum_{N\geq 0} 2^{2N(\ell+j)} {\| \langle x\rangle^{k-j} P_N \varphi \|}^2_{L^2}.
\label{eq:induc2.1}
\end{align}
In \eqref{eq:induc2.1} we then distinguish the cases $\jx \leq \jt 2^N$ and $\jx > \jt 2^N$.
When $\jx \leq \jt 2^N$ we bound
\begin{align*}
\begin{split}
&  \langle t\rangle^{2j} \sum_{N\geq 0} 2^{2N(\ell+j)} {\| \langle x\rangle^{k-j} P_N \varphi \|}^2_{L^2(\jx \leq \jt 2^N)}
  \\
  & \ls  \langle t\rangle^{2j} \sum_{N\geq 0}  2^{2N(\ell+j)} {\| ( \jt 2^N)^{k-j} P_N \varphi \|}^2_{L^2}
  \\
  & =  \langle t\rangle^{2k} \sum_{N\geq 0} 2^{2N(\ell+k)} {\| P_N \varphi \|}^2_{L^2} 
  \approx  \langle t\rangle^{2k} {\| \varphi \|}_{H^{\ell+k}}^2;
\end{split}
\end{align*}
this last term is accounted for in the right-hand side of \eqref{eq:induc2''}.
Similarly, we bound the contribution from \eqref{eq:induc2.1} in the region $\jx > \jt 2^N$ by
\begin{align*}
\begin{split}
& \langle t\rangle^{2j} \sum_{N\geq 0} 2^{2N(\ell+j)} {\| \langle x\rangle^{k-j} P_N \varphi \|}^2_{L^2(\jx > \jt 2^N)}
\\
& \ls  \sum_{N\geq 0} 2^{2N\ell} {\| \langle x\rangle^{k}( \langle x \rangle^{-1} 
  \langle t\rangle 2^{N})^j P_N \varphi \|}^2_{L^2(\jx > \jt 2^N)}
  \\
  & = \sum_{N\geq 0} 2^{2N\ell} {\| \langle x\rangle^{k} P_N \varphi \|}^2_{L^2}
  \approx {\| \langle x \rangle^k \varphi \|}_{H^{\ell}}^2,
\end{split}
\end{align*}
having used \eqref{HsLPx} and \eqref{HsLP} for the last equivalence. 
This gives us \eqref{eq:induc2''} and therefore \eqref{eq:induc2}.

Now, let $\lfloor \gamma\rfloor$ be the integer part of $\gamma \geq 1$. 
We claim that, by interpolation, one can deduce from \eqref{eq:induc2} that, 
for all $\gamma'\ge0$ and $\ell$ an integer such that $\gamma'+\ell\le\lfloor \gamma\rfloor$,
\begin{align}\label{eq:induc3}
\big\|\langle H\rangle^{\frac{\ell}{2}} \langle x\rangle^{\gamma'} U_t^\psi \varphi \big\| 
\lesssim \langle t\rangle^{\gamma'} \big\|\langle H\rangle^{\frac{\gamma'+\ell}{2}}\varphi\big\|
+\|\langle H\rangle^{\frac{\ell}{2}}\langle x\rangle^{\gamma'}\varphi\|.
\end{align}
To see this, consider the linear operator $T_\ell: \varphi \rightarrow \langle H \rangle^{\ell/2} U_t^\psi \varphi$
for $\ell = 0,\dots n-k$ and $v \in W^{\ell+k,(q,\infty)}$ as above.
Then, using the equivalence of Sobolev norms \eqref{Sobequiv} and commuting (standard) derivatives and weights,
the inequality \eqref{eq:induc2} says that $T$
maps $H^k(\langle t \rangle^k dx) \cap L^2_k$ into $L^2_k$ (recall the definition above \eqref{Bsg}).
Standard interpolation between Sobolev spaces and between weighted $L^2$ spaces then gives that
$T$ maps $H^{\gamma'}(\langle t \rangle^{\gamma'} dx) \cap L^2_{\gamma'}$ into $L^2_{\gamma'}$
that is, using again the equivalence \eqref{Sobequiv}, inequality \eqref{eq:induc3}.

Taking $\ell=0$ in \eqref{eq:induc3}, we obtain
\begin{align}\label{eq:induc4}
\big\|\langle x\rangle^{\gamma'} U_t^\psi \varphi \big\| 
\lesssim \langle t\rangle^{\gamma'} \big\|\langle H\rangle^{\frac{\gamma'}{2}}\varphi\big\|
+\|\langle x\rangle^{\gamma'}\varphi\|,
\end{align}
for all $\gamma'\le \lfloor \gamma\rfloor$. 
We then write, similarly as in \eqref{eq:g1},
\begin{align}
& \big\| \langle x\rangle^{\gamma}U_t^\psi \varphi \big\|\notag
\\
& \lesssim \big\| \langle x\rangle^{\gamma} \varphi \big\| 
  + \int_0^t  \big( \big \| \langle H\rangle^{\frac12} \langle x\rangle^{\gamma-1} 
  U_{r}^\psi \varphi \big \| +\big\|\langle x\rangle^{\gamma-2}U_{r}^\psi \varphi \big \| \big) \, dr . 
\label{eq:ha1}
\end{align}
Since $\gamma-1\le \lfloor \gamma\rfloor$, we can apply \eqref{eq:induc3}, which yields
\begin{equation*}
\big \| \langle H\rangle^{\frac12} \langle x\rangle^{\gamma-1} U_{r}^\psi \varphi \big \|
\lesssim \langle r\rangle^{\gamma-1} \big\|\langle H\rangle^{\frac{\gamma}{2}}\varphi\big\|
+\|\langle H\rangle^{\frac{1}{2}}\langle x\rangle^{\gamma-1}\varphi\|.
\end{equation*}
Inserting this into \eqref{eq:ha1}, and then applying \eqref{eq:induc3} again (if $\gamma > 2$) 
gives
\begin{align}
\nonumber
& \big\| \langle x\rangle^{\gamma}U_t^\psi \varphi \big\|
\\
\nonumber
& \lesssim \big\| \langle x\rangle^{\gamma} \varphi \big\| 
  + \langle t\rangle^{\gamma} \big\|\langle H\rangle^{\frac{\gamma}{2}}\varphi\big\|
  + \langle t\rangle \|\langle H\rangle^{\frac{1}{2}}\langle x\rangle^{\gamma-1}\varphi\|
  + \int_0^t \big\|\langle x\rangle^{\gamma-2}U_{r}^\psi \varphi \big \| \, dr  
\\
& \lesssim \big\| \langle x\rangle^{\gamma} \varphi \big\| 
  + \langle t\rangle^{\gamma} \big\|\langle H\rangle^{\frac{\gamma}{2}}\varphi\big\|
  + \langle t\rangle \|\langle H\rangle^{\frac{1}{2}}\langle x\rangle^{\gamma-1}\varphi\|.
  \label{eq:induc40}
\end{align}
Finally, we recall \eqref{eq:induc2'} which, for $\ell=0,j=1$ reads
\begin{align*}
\langle t\rangle \| \langle H \rangle^{\frac{1}{2}} \langle x \rangle^{k-1}\varphi \big\|
	 \ls \langle t\rangle^k \| \langle H \rangle^{\frac{k}{2}} \varphi \big\|
	+ \| \langle x\rangle^k \varphi\|,
\end{align*}
and using interpolation for the weighted Sobolev spaces 
we obtain the same inequality above with $\gamma$ replacing $k$;
plugging it into \eqref{eq:induc40} we get
\begin{align}\label{eq:induc41}
\big\|\langle x\rangle^{\gamma} U_t^\psi \varphi \big\| 
\lesssim \langle t\rangle^{\gamma} \big\|\langle H\rangle^{\frac{\gamma}{2}}\varphi\big\|
+\|\langle x\rangle^{\gamma}\varphi\|.
\end{align}
This concludes the proof of the lemma.
\end{proof}

\begin{proof}[Proof of Lemma \ref{lemLpdiff}]
The proof of \eqref{lemLpdiffconc} uses similar estimates to \eqref{Lp-est} applied to the difference of two solutions. 
From Duhamel's representation we have
\begin{align*}
u_t = e^{-iHt} u_{0} -i \int_{0}^{t} e^{-iH(t-r)}W^{u}_{r} u_r \, dr, \qquad u \in \{\psi,\varphi\}.
\end{align*}
A bound by the right-hand side of \eqref{lemLpdiffconc} for the difference of the linear flows,
$e^{-iHt} \psi_{0} - e^{-iHt} \varphi_{0}$, follows directly from the linear dispersive estimate \eqref{disp-est-comb}.
For the difference of the nonlinear parts we have
\begin{align}
\nonumber
& {\Big\| \int_{0}^{t} e^{-iH(t-r)} \big( W^{\psi}_{r} \psi_r - W^{\varphi}_{r} \varphi_r \big)\, dr \Big\|}_{L^p} 
\\
\label{prLpdiff1}
&  \lesssim  \int_{0}^{t} \langle t-r \rangle^{-d/(2q)} \Big( 
  {\big\| (W^{\psi}_{r} - W^{\varphi}_{r})\psi_r \big\|}_{L^{p'}\cap H^s} 
  + {\big\| W^{\varphi}_{r} (\psi_r - \varphi_r) \big\|}_{L^{p'}\cap H^s} \Big)
  \, dr .
\end{align}
The estimates for the $H^s$ norms are the same used for the contraction argument in the 
proof of Theorem \ref{thm:HE-exist} 
so we only show the bound for the $L^{p'}$ norm.

To estimate the first term in \eqref{prLpdiff1} we note 
 that, in view of \eqref{relpq}, $1/p' = 1/2 + 1/(2q)$ and $1+1/(2q) = 1/q + 1/p_1$, with $1/p_1 = 1/2 + 1/p$. Then,
using H\"older's and Young's inequalities, and \eqref{L2-ests}, we get
\begin{align}
{\big\| (W^{\psi}_{r} - W^{\varphi}_{r})\psi_r \big\|}_{L^{p'}} 
  & \lesssim {\big\| v \ast \big( |\psi_{r}|^2 - |\varphi_r|^2 ) \big\|}_{L^{2q}} {\| \psi_r \|}_{L^2} \notag
  \\
& \lesssim {\big\| |\psi_{r}|^2 - |\varphi_r|^2 \big\|}_{L^{p_1}} {\| \psi_r \|}_{L^2} \notag
  \\
  & \lesssim \varepsilon^2 {\| \psi_{r} - \varphi_{r} \|}_{L^p}. \label{prLpdiff2}
\end{align}

For the second term in \eqref{prLpdiff1}, using again H\"older with \eqref{relpq}, Young's inequality
and \eqref{L2-ests}, we have
\begin{align}\label{prLpdiff3}
\begin{split}
{\big\| W^{\varphi}_{r} (\psi_r - \varphi_r) \big\|}_{L^{p'}}
  & \lesssim  {\| \psi_r - \varphi_r \|}_{L^{p}} {\big\| v \ast |\varphi_{r}|^2 \big\|}_{L^{q}}
  \lesssim \varepsilon^2 {\| \psi_{r} - \varphi_{r} \|}_{L^p}.
\end{split}
\end{align}

Plugging \eqref{prLpdiff2} and \eqref{prLpdiff3} into \eqref{prLpdiff1} we have obtained 
\begin{align*}
{\| \psi_t - \varphi_t \|}_{L^p} \leq C \jt^{-d/(2q)} {\| \psi_0 - \varphi_0 \|}_{L^{p'}} 
 + C \varepsilon^2 \int_0^t \langle t-r \rangle^{-d/(2q)}  {\| \psi_{r} - \varphi_{r} \|}_{L^p} \, dr.
\end{align*}
From this we can obtain the conclusion \eqref{lemLpdiffconc} by 
a bootstrap argument (such as the one for the quantity in \eqref{pq-estsboot0} in the proof of Theorem \ref{thm:HE-exist}).
\end{proof}

\begin{proof}[Proof of Lemma \ref{lemWtdiff}]
\eqref{lemWrdiffconc0} follows from the H{\"o}lder and Young inequalities, \eqref{pq-ests} and Lemma \ref{lemLpdiff}:
\begin{align*}
\big\| W^{\psi}_{t} -W_t^\varphi \big\|_{L^\infty}&\lesssim\|v\|_{L^{q,\infty}} \big\| |\psi_t|^2-|\varphi_t|^2 \big\|_{L^{q'}}\\
&\lesssim \varepsilon \|v\|_{L^{q,\infty}} \langle t\rangle^{-d/(2q)} \big\| \psi_t-\varphi_t  \big\|_{L^{p}}\\
&\lesssim \varepsilon \|v\|_{L^{q,\infty}} \langle t\rangle^{-d/q} \big\| \psi_0-\varphi_0  \big\|_{L^{p'}\cap H^s}.
\end{align*}
Similarly, using \eqref{Hs-ests},
\begin{align*}
\big\| W^{\psi}_{t} -W_t^\varphi \big\|_{W^{s,2q}}&\lesssim\|v\|_{L^{q,\infty}} \big\| |\psi_t|^2-|\varphi_t|^2 \big\|_{W^{s,p_1}}\\
&\lesssim \|v\|_{L^{q,\infty}} \big\| \psi_t-\varphi_t  \big\|_{L^{p}}\\
&\lesssim \|v\|_{L^{q,\infty}} \langle t\rangle^{-d/(2q)} \big\| \psi_0-\varphi_0  \big\|_{L^{p'}\cap H^s}.
\end{align*}
This proves \eqref{lemWrdiffconc0a}. Assuming in addition that $v$ satisfies \eqref{v-cond} with $\gamma\le s$, \eqref{lemWrdiffconcb} can be proven in the same way.

To prove \eqref{lemWrdiffconc}, we proceed similarly as in the proof of Lemmas \ref{lem:Wt} and \ref{lem:Wt2}, 
writing for $f\in H^s(\R^d)$,
\begin{align*}
&\big\|(W_t^\psi-W_t^\varphi) f\big\|_{H^s} \\
&\lesssim \big\| W^{\psi}_{t} -W_t^\varphi \big\|_{L^\infty} \big\| f\big\|_{H^s} 
+ \big\| W^{\psi}_{t}-W_t^\varphi \big\|_{W^{s,2q}} \big\| f \big\|_{L^p}
\\
& \lesssim \varepsilon 
\langle t\rangle^{-d/q} 
\big\| \psi_0-\varphi_0  \big\|_{L^{p'}\cap H^s} \big\| f\big\|_{H^s} + 
  \langle t\rangle^{-d/(2q)} \big\| \psi_0-\varphi_0 \big\|_{L^{p'}\cap H^s} \big\| f \big\|_{L^p},
\end{align*}
where we used \eqref{lemWrdiffconc0} and \eqref{lemWrdiffconc0a} in the last inequality. 
Together with Sobolev's embedding and Lemma \ref{lemLpdiff}, this proves \eqref{lemWrdiffconc}. 
If in addition $v$ satisfies \eqref{v-cond} with $\gamma\le s$, then, using \eqref{lemWrdiffconc0}
and \eqref{lemWrdiffconcb} gives
\begin{align*}
&\big\|(W_t^\psi-W_t^\varphi) f\big\|_{H^\gamma} 
\\
&\lesssim \big\| W^{\psi}_{t} -W_t^\varphi \big\|_{L^\infty} \big\| f\big\|_{H^\gamma} 
+ \big\| W^{\psi}_{t}-W_t^\varphi \big\|_{W^{\gamma,\infty}} \big\| f \big\|_{L^2}
\\
&\lesssim \varepsilon \|v\|_{W^{\gamma,(q,\infty)}} \langle t\rangle^{-d/q} \big\| \psi_0-\varphi_0  \big\|_{L^{p'}\cap H^s} \big\| f\big\|_{H^\gamma} .
\end{align*}
This establishes \eqref{lemWrdiffconc2}.
\end{proof}

\begin{proof}[Proof of Lemma \ref{lemHsdiff}]
For a given $\phi \in H^s$, ${\| \phi \|}_{H^s} \leq 1$, let us define
\begin{align*}
(u_1)_t = u_1 := U_t^\psi \phi, \qquad (u_2)_t = u_2 := U_t^\varphi \phi.
\end{align*}
We want to estimate ${\|u_1-u_2 \|}_{H^s} \lesssim \varepsilon {\| \psi_0 - \varphi_0 \|}_{L^{p'} \cap H^s}$. 
From Duhamel's formula we have
\begin{align}
\nonumber
{\| u_1-u_2 \|}_{H^s} & \lesssim
  \int_{0}^{t} {\big\| W^{\psi}_{r}(u_1)_r - W^{\varphi}_{r}(u_2)_r \big\|}_{H^s} 
  \, dr 
\\
& \lesssim \int_{0}^{t} \Big( 
  {\big\| (W^{\psi}_{r} - W^{\varphi}_{r}) (u_1)_r \big\|}_{H^s} 
  + {\big\| W^{\varphi}_{r} \big( (u_1)_r - (u_2)_r \big) \big\|}_{H^s} \Big)
  \, dr. 
\label{prHsdiff1} 
\end{align}

For the first of the two quantities in the integral in \eqref{prHsdiff1} we use Lemma \ref{lemWtdiff}, which gives
\begin{align}
 {\big\| (W^{\psi}_{r} - W^{\varphi}_{r})(u_1)_r \big\|}_{H^s} 
  & \lesssim 
 \langle r \rangle^{-d/(2q)} {\| \psi_0 - \varphi_0 \|}_{L^{p'} \cap H^s} {\| (u_1)_r \|}_{H^s} \notag \\
  & \lesssim \langle r \rangle^{-d/(2q)} {\| \psi_0 - \varphi_0 \|}_{L^{p'} \cap H^s}. \label{prHsdiff10}
\end{align}
having also used \eqref{lemHsconc}.

For the second term in the integral in \eqref{prHsdiff1}, we use Lemma \ref{lem:Wt2}, which gives
\begin{align}
& {\big\| W^{\varphi}_{r} ((u_1)_r-(u_2)_r) \big\|}_{H^s}  
  \lesssim \varepsilon \langle r \rangle^{-d/(2q)} {\| (u_1)_r-(u_2)_r \|}_{H^s}. \label{prHsdiff3} 
\end{align}

Putting together \eqref{prHsdiff1}-\eqref{prHsdiff3} we have obtained
\begin{align*}
{\| (u_1)_t- (u_2)_t \|}_{H^s} & \lesssim {\| \psi_0 - \varphi_0 \|}_{L^{p'} \cap H^s} 
  + \int_0^t \langle r \rangle^{-d/(2q)} {\| (u_1)_r - (u_2)_r \|}_{H^s} \, dr,
\end{align*}
which implies \eqref{lemHsdiffconc} via Gronwall's inequality since $d/(2q)>1$.
\end{proof}

\begin{proof}[Proof of Lemma \ref{lemL2diff}]
Let $\phi\in L^2_\gamma\cap H^\gamma$. We use the notations of the proof of Lemma \ref{lemHsdiff} and proceed similarly. As in \eqref{prHsdiff1}, we have
\begin{align}
\nonumber
{\| u_1-u_2 \|}_{L^2_\gamma} & \lesssim
  \int_{0}^{t} {\big\| e^{-i(t-r)H} \big(W^{\psi}_{r}(u_1)_r - W^{\varphi}_{r}(u_2)_r\big) \big\|}_{L^2_\gamma} 
  \, dr 
\\
& \lesssim
  \int_{0}^{t} \Big( \langle t-r\rangle^\gamma \big\|W^{\psi}_{r}(u_1)_r - W^{\varphi}_{r}(u_2)_r \big\|_{H^\gamma} \notag \\
  &\qquad \qquad\qquad+\big\|W^{\psi}_{r}(u_1)_r - W^{\varphi}_{r}(u_2)_r \big\|_{L^2_\gamma}\Big)
  \, dr ,
\label{prL2diff1} 
\end{align}
where we used Lemma \ref{lemL2gamma} (with $v=0$) in the second inequality. It follows from \eqref{prHsdiff10}--\eqref{prHsdiff3} that
\begin{align*}
&\big\|W^{\psi}_{r}(u_1)_r - W^{\varphi}_{r}(u_2)_r \big\|_{H^\gamma}\\
&\lesssim \langle r \rangle^{-d/(2q)} \big( {\| \psi_0 - \varphi_0 \|}_{L^{p'} \cap H^\gamma}\|\phi\|_{H^\gamma}+{\| (u_1)_r-(u_2)_r \|}_{H^\gamma}\big).
\end{align*}
Applying Lemma \ref{lemHsdiff}, this gives
\begin{align}
\big\|W^{\psi}_{r}(u_1)_r - W^{\varphi}_{r}(u_2)_r \big\|_{H^\gamma}\lesssim\langle r \rangle^{-d/(2q)} {\| \psi_0 - \varphi_0 \|}_{L^{p'} \cap H^\gamma}\|\phi\|_{H^\gamma}.
\end{align}

For the $L^2_\gamma$-norm, using Lemma \ref{lemL2gamma} and Lemma \ref{lemWtdiff}, we have
\begin{align}
& {\big\| (W^{\psi}_{r} - W^{\varphi}_{r})(u_1)_r \big\|}_{L^2_\gamma} 
  \notag \\
  & \lesssim  \big\| (W^{\psi}_{r} - W^{\varphi}_{r})\big\|_{L^\infty} {\| (u_1)_r \|}_{L^2_\gamma}
  \notag \\
    & \lesssim \varepsilon \langle r \rangle^{-\frac{d}{q}+\gamma} {\| \psi_0 - \varphi_0 \|}_{L^{p'}\cap H^\gamma} \|\phi\|_{H^\gamma}+\varepsilon \langle r \rangle^{-\frac{d}{q}} {\| \psi_0 - \varphi_0 \|}_{L^{p'}\cap H^\gamma}{\| \phi \|}_{L^2_\gamma}. \label{prL2diff2} 
\end{align}

Next, we obtain from Lemma \ref{lem:Wt} that
\begin{align}
 \big\| W^{\varphi}_{r} ((u_1)_r-(u_2)_r) \big\|_{L^2_\gamma} & \lesssim \|W^{\varphi}_{r}\|_{L^\infty}  \big\|(u_1)_r-(u_2)_r \big\|_{L^2_\gamma}\notag \\
& \lesssim \varepsilon^2 \langle r \rangle^{-d/q}  {\| (u_1)_r-(u_2)_r \|}_{L^2_\gamma}. \label{prL2diff3} 
\end{align}

Putting together \eqref{prL2diff1}-\eqref{prL2diff3} we have obtained
\begin{align}
\nonumber
{\| u_1-u_2 \|}_{L^2_\gamma} 
& \lesssim  \langle t \rangle^\gamma
  \int_{0}^{t}  \big(  \langle r \rangle^{-d/(2q)} + \langle r \rangle^{-\frac{d}{q}+\gamma} \big) {\| \psi_0 - \varphi_0 \|}_{L^{p'} \cap H^\gamma} \|\phi\|_{H^\gamma} \, dr  \notag \\
&\quad+  \varepsilon  \int_{0}^{t}   \langle r \rangle^{-d/q}  {\| \psi_0 - \varphi_0 \|}_{L^{p'} \cap H^\gamma} \|\phi\|_{L^2_\gamma} \, dr  \notag \\
  &\quad + \varepsilon^2  \int_{0}^{t}  \langle r \rangle^{-d/q}  {\| (u_1)_r-(u_2)_r \|}_{L^2_\gamma}
  \, dr \notag \\
  & \lesssim {\| \psi_0 - \varphi_0 \|}_{L^{p'} \cap H^\gamma} \big ( \langle t\rangle^\gamma  \|\phi\|_{H^\gamma} + \|\phi\|_{L^2_\gamma} \big)\notag \\
  &\quad + \varepsilon^2  \int_{0}^{t}  \langle r \rangle^{-d/q}  {\| (u_1)_r-(u_2)_r \|}_{L^2_\gamma}
  \, dr ,
\label{prHSdiff4} 
\end{align}
since $d/(2q)$ and $d/q-\gamma>1$. Using again $d/q-\gamma>1$, 
Eq. \eqref{prHSdiff4} implies \eqref{lemL2diffconc} via Gronwall's lemma.
\end{proof}

\medskip

\section{
Proof of Theorem \ref{thm:max-vel-x-Ht}}\label{app:max-vel_Ht}

{In this appendix we give the proof of Theorem \ref{thm:max-vel-x-Ht} 
which is an improved version of the maximal velocity bounds for linear time-dependent potentials in \cite{ArbPusSigSof}; 
see Theorem 3.3 there.
We consider time-dependent hamiltonians of the form 
\begin{align}\label{Ht}
H_t:=H+W_t,
\end{align}
where $H=-\frac12 \Delta+V(x)$, with $V$ real and satisfying \eqref{V-cond} 
and $W_t(x)=W(x, t)$, a real, time-dependent bounded potential satisfying 
\begin{align}\label{Wt-cond}
\int_0^\infty \int_t^\infty \|\p_x^\al W_r\|_{L^\infty} drdt <\infty 
\, \text{ with either } \, 0 \le |\alpha| \le 1 \text{ or } 1\le|\al|\le 2.
\end{align}
The main difference with respect to \cite{ArbPusSigSof} is the quantification of the dependence 
of the bounds on the norm in \eqref{Wt-cond}.
The assumption \eqref{Wt-cond} in fact is weaker than the assumption made in \cite{ArbPusSigSof}
and, while this may still not be optimal, such an improvement over \cite{ArbPusSigSof} is necessary 
to obtain the results in the present paper.

We prove Theorem \ref{thm:max-vel-x-Ht} in the case where
\begin{equation}\label{eq:def_wt0}
w_t :=\int_t^{\infty} \| W_r\|_{W^{1,\infty}} dr
\end{equation}
is integrable and explain next how to modify the proof in the case where
\begin{equation}\label{eq:def_wt'}
w_t ':= \max_{1\le|\alpha|\le2} \int_t^{\infty} \|\p_x^\al W_r\|_\infty dr
\end{equation}
is integrable.

\subsection{Preliminary estimates}

We will use the following notation: 
\begin{equation}\label{eq:nota}
A_\rho^\pm:=\{x\in \R^d: \pm |x|\ge \pm\rho\},\quad \chi^-_b:=\chi_{A_{b}^-},
\end{equation}
and
\begin{equation*}
 x_{ts} :=s^{-1}(\x -a -v t),
\end{equation*}
and the convention that 
$A\,\dot\le\, B$ and $A\,\dot\ls\, B$ mean that for any integer $n>0$, there is $C_n>0$ s.t. 
$A \le B+ C_n s^{-n}$ and $A \ls B+ C_n s^{-n}$, respectively. 
Recall that $k_I$ has been defined in \eqref{kg}. Given $I$ a bounded open interval, 
we fix $c> v > k_I$. 
and let $\cF\subset C^\infty(\R;\R)$ be the set of functions $f\ge0$, 
supported in $\R^+$ and satisfying $f(\lam)=1$ for $\lam\ge c-v$, 
and $f^\prime\ge 0$, with $\sqrt{f'}\in C^\infty$.
We say that $u$ is admissible if $u$ is a smooth function such that 
$\mathrm{supp}(u)\subset (0,c-v)$ and $\sqrt{u}\in C^\infty$.

In what follows we use the notation $p:=-i\nabla$.

\begin{lemma}\label{lem:p-est}   
Let $I$ be a bounded open interval, $g\in C_0^\infty(I;\R)$,  $f \in \cF$ and $u^2=f'$.
Then there is $\tilde u$, with $\tilde u^2$ admissible, s.t.
\begin{align}\label{p-est}
\|p u(x_{ts})g(H)\psi\|&\,  \dot\le \, k_I\| u(x_{ts})g(H)\psi\|+ s^{-1}\| \tilde u(x_{ts})g(H)\psi\|.
\end{align}
\end{lemma}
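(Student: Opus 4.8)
The plan is to exploit the energy localization $g(H)$ to replace the operator $p$ by the scalar $k_I$, up to lower-order errors. First I would recall that, by the definition \eqref{kg} of $k_I$, one has $\||p|\chi_I(H)\| = k_I$, and since $g \in C_0^\infty(I;\R)$ we may write $g(H) = \chi_I(H) g(H)$, so that $\||p| g(H)\| \le k_I \|g(H)\|$. The subtlety is that in \eqref{p-est} the operator $p$ does not act directly on $g(H)\psi$ but on $u(x_{ts}) g(H)\psi$; since $u(x_{ts})$ is a function of $x$ it does not commute with $p = -i\nabla$. The strategy is therefore to commute $u(x_{ts})$ past $p$ and past $g(H)$ to bring it next to the energy projection, paying a commutator cost that is $O(s^{-1})$ because $\nabla_x [u(x_{ts})] = s^{-1} (\nabla u)(x_{ts})$.

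Concretely, I would proceed as follows. Write $p\, u(x_{ts}) = u(x_{ts})\, p + [p, u(x_{ts})]$, and note $[p,u(x_{ts})] = -i s^{-1} (\nabla u)(x_{ts})$, which is bounded by $C s^{-1}$ in operator norm; this produces the term $s^{-1}\|\tilde u(x_{ts}) g(H)\psi\|$ on the right-hand side, where $\tilde u$ is chosen smooth with $\tilde u^2$ admissible and $\tilde u = 1$ on the support of $\nabla u$ (which is contained in $(0,c-v)$ since $u^2 = f'$ is supported there and $f' \in C_0^\infty((0,c-v))$ after noting $f$ is constant outside a compact set). It remains to handle $u(x_{ts})\, p\, g(H)\psi$. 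Here I would commute $u(x_{ts})$ with $g(H)$: since $g(H)$ is (an operator-norm limit of) functions localized near $I$, and more importantly since $p g(H)$ is a bounded operator, one writes $u(x_{ts}) p g(H)\psi = u(x_{ts}) \chi_I(H) p g(H)\psi + \ldots$. Using almost-analytic extensions / the Helffer–Sjöstrand formula for $g(H)$ one obtains $[u(x_{ts}), g(H)] = O(s^{-1})$ in norm (each derivative hitting $u(x_{ts})$ gives an $s^{-1}$), again absorbing the error into the $s^{-1}\|\tilde u(x_{ts}) g(H)\psi\|$ term after reinserting an energy cutoff and using that $\tilde u^2$ can be taken admissible. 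Once $u(x_{ts})$ is to the left of $p g(H)$, I bound $\|u(x_{ts}) \chi_I(H)| p | g(H)\psi\|$ by $k_I \|u(x_{ts}) g(H)\psi\|$ using the operator-norm bound $\||p|\chi_I(H)\| = k_I$ together with another commutation of $u(x_{ts})$ with $\chi_I(H)$ (same $O(s^{-1})$ cost, handled through a smooth function $\tilde\chi_I$ equal to $1$ on $I$ and using $g(H) = \tilde\chi_I(H)g(H)$).

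The main obstacle is the bookkeeping of the commutators $[u(x_{ts}), g(H)]$ and $[u(x_{ts}), \chi_I(H)]$ and verifying that all error terms genuinely carry a factor $s^{-1}$ and are supported in the correct region so they can be written as $s^{-1}\|\tilde u(x_{ts}) g(H)\psi\|$ with $\tilde u^2$ admissible (in particular that $\tilde u$ is supported away from $0$ and below $c-v$). This requires a uniform-in-$t,s$ pseudodifferential / Helffer–Sjöstrand estimate for commutators of smooth compactly supported functions of $x_{ts}$ with smooth functions of $H$, which is standard given the $\Delta$-boundedness of $V$ in \eqref{V-cond} but needs to be stated carefully; the notation $\dot\le$ is precisely designed to absorb the resulting $O(s^{-n})$ remainders. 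The rest is routine manipulation.
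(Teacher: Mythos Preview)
Your commutator bookkeeping is right in spirit, but the order in which you commute is backwards, and the crucial bound you claim does not follow. After writing $p\,u(x_{ts}) = u(x_{ts})\,p + O(s^{-1})$ you are left with $u(x_{ts})\,p\,g(H)\psi$, and you then assert that $\|u(x_{ts})\chi_I(H)|p|g(H)\psi\|$ is bounded by $k_I\|u(x_{ts})g(H)\psi\|$. But the operator norm identity $\||p|\chi_I(H)\|=k_I$ only yields an estimate of the form $\|A\,u(x_{ts})g(H)\psi\|\le k_I\|u(x_{ts})g(H)\psi\|$ when the bounded operator $A=|p|\chi_I(H)$ sits on the \emph{left} of $u(x_{ts})g(H)\psi$. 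With $u(x_{ts})$ on the far left you get at best $k_I\|u\|_\infty\|g(H)\psi\|$, which is not the desired bound. Your proposed remedy of commuting $u(x_{ts})$ with a smooth $\tilde\chi_I(H)$ does not fix this: it moves $\tilde\chi_I(H)$ to the left of $u$, but $p$ remains sandwiched between $u$ and $g(H)$, so you are back where you started.

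The paper avoids this by never commuting $p$ past $u$ in the first place; instead it inserts the energy cutoff \emph{between} $p$ and $u(x_{ts})$. Writing $g=\tilde g\,g$ with $\tilde g\in C_0^\infty(I;\R)$ equal to $1$ on $\mathrm{supp}\,g$, one has
\[
p\,u(x_{ts})\,g(H)=p\,\tilde g(H)\,u(x_{ts})\,g(H)+p\,[u(x_{ts}),\tilde g(H)]\,g(H).
\]
The first term now has $p\,\tilde g(H)$ together on the far left, so its norm is bounded by $\|p\,\tilde g(H)\|\,\|u(x_{ts})g(H)\psi\|\le k_I\|u(x_{ts})g(H)\psi\|$. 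For the second term one invokes the commutator expansion \eqref{comm-exp-x} of Lemma~\ref{lem:commut-exp} to write $p\,[u(x_{ts}),\tilde g(H)]=\sum_{k=1}^{n-1}s^{-k}(k!)^{-1}(pB_k)\,u^{(k)}(x_{ts})+O(s^{-n})$ with $pB_k$ bounded; since every $u^{(k)}$ is smooth and supported in $(0,c-v)$, all these error terms are absorbed into $s^{-1}\|\tilde u(x_{ts})g(H)\psi\|$ for a single admissible $\tilde u$, with the remainder swallowed by~$\dot\le$.
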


\begin{proof}  
We write $g(H)=\tilde g(H)g(H)$, with $\tilde g\in C^\infty_0(I;\R)$ and $\tilde g=1$ on $\mathrm{supp}\, g$. 
Commuting $\tilde g(H)$ to the left, we find
\begin{align}\label{p-rel} 
p u(x_{ts})g(H)&=  p\tilde g(H) u(x_{ts})g(H)+p [u(x_{ts}), \tilde g(H)] g(H). 
\end{align}
Now, it follows from \eqref{comm-exp-x} in Appendix \ref{sec:commut} that  
\begin{equation*}
p[u(x_{ts}), \tilde g(H)]=\sum_{k=1}^{n-1}\frac{s^{- k}}{k!} p B_k u^{(k)}(x_{ts}) +O(s^{-n}),
\end{equation*}
for any $n$, with $p B_k$ bounded. 
Taking the norm of \eqref{p-rel} applied to $\psi$ and using $\| p\tilde g(H)\|\ls k_I$, 
we obtain
\begin{align*}
\|p u(x_{ts})g(H)\psi\|\,  \dot\le \, k_I\| u(x_{ts})g(H)\psi\|+ s^{-1} \sum_{k=1}^{n-1}\| u^{(k)}(x_{ts})g(H)\psi\|.
\end{align*}
Since $u^{(k)}$ are smooth and $\mathrm{supp}\, u^{(k)}\subset(0,c-v)$, one easily verifies that 
\eqref{p-est} holds for some admissible $\tilde u$.
\end{proof}

\medskip
\subsection{Proof of Theorem \ref{thm:max-vel-x-Ht}}

Note that Eq. \eqref{gt} implies that 
\begin{align}\label{g+gt0} 
g_+(H)-g_t(H) = -i \int_t^\infty U_r^{-1}[g(H), W_r]U_rdr
\end{align}
and, therefore,
\begin{align}\label{g+gt} 
\big\| g_+(H)-g_t(H) \big \| 
  \lesssim w_t,
\end{align}
which, together with $U_t g_t(H)=U_tU_t^{-1}g(H)U_t=g(H)U_t$,  implies
\begin{align} \label{PTR} 
\big \| U_t g_+(H) - g(H)U_t \big \| \lesssim w_t.
\end{align}

\begin{proof}[Proof of Theorem \ref{thm:max-vel-x-Ht}]
Let 
\begin{equation}\label{defpsit}
\psi_t := U_tg_+(H)\phi_0, \qquad \phi_0 := \chi^-_b \phi,
\end{equation}
and
\begin{align}\label{propag-obs1}
\Phi_{ts} & = f(x_{ts}), 
\end{align}
for some $ f \in \cF$, with $0\le t\le s$. Let $\langle \Phi_{ts} \rangle_t := \langle \psi_t , \Phi_{ts} \psi_t \rangle$. We have
 \begin{align}\label{Heis}
\begin{split}
& \p_t\lan \Phi_{ts}\ran_t  = \lan \psi_t , D\Phi_{ts} \psi_t \ran,
  \quad D\Phi_{ts}:=i[H,\Phi_{ts}]+\frac{\partial}{\partial t}\Phi_{ts}.
\end{split}
\end{align}
We compute $D\Phi_{ts}$. First, we have
\begin{align} \label{dt-Phi}
\frac{\partial}{\partial t}\Phi_{ts}=-s^{-1}v \,f^\prime(x_{ts}).
\end{align}
Then, letting $A : = \frac12(p\cdot (\n\x)+(\n\x)\cdot p)$, factorizing $f^\prime= u^2$ and using that $[H_t, \x]=A$ and
$[[A,u],u]=0$, one verifies that
\begin{align} \label{H-Phi-comm}
i[H_t,\Phi_{ts}]=\frac{i}{2}[p^2,\Phi_{ts}]
&=\frac12 s^{-1}(A f^\prime(x_{ts})+f^\prime(x_{ts})A)
\notag
\\ 
& = s^{-1}\,u(x_{ts})\,A\,u(x_{ts}).
\end{align}
Together with \eqref{dt-Phi}, this yields:
\begin{align} \label{DPhi-expr-x}
D\Phi_{ts}= s^{-1}\,u(x_{ts})\,(A-v)\,u(x_{ts}).\end{align}

For convenience, let
$R := \psi_t - g(H)U_t\phi_0$. Note that, by \eqref{PTR}-\eqref{defpsit} we have
\begin{equation}\label{pR0} 
R = O(w_t)\phi_0.
\end{equation} 
Moreover, we can also see that
\begin{equation}\label{pR}
{\| p R \|} = O(w_t)
\end{equation} 
as follows:
from \eqref{defpsit} we have $pR = p (U_t g_+(H) - g(H)U_t) \phi_0$,
and from \eqref{g+gt0} we get
\begin{align}\label{pR1} 
p (U_t g_+(H) - g(H)U_t) = -i p \, U_t \int_t^\infty U_r^{-1}[g(H), W_r]U_rdr;
\end{align}
then, using the same notation in \eqref{p-rep}
(that is, $S:=(H+c)^{1/2}$ with $c:=\inf H + 1$,
and 
$B':=p(H+c)^{-1/2}$) we can control $p=B'S$ 
by $S$, apply the commutator bound \eqref{p-U-comm-est} and its analogue for the inverse $U_r^{-1}$,
and, recalling the definition of $w_t$ in \eqref{eq:integr_wt}, we arrive at \eqref{pR}.

Using the above notation for $R$, from the formula \eqref{Heis}, we can write
\begin{align}
\label{Ht-pf1}
\begin{split}
& \p_t\lan \Phi_{ts}\ran_t 
  = \lan g(H)U_t\phi_0, D\Phi_{ts} g(H)U_t\phi_0\,\ran 
  \\
  & + \lan R, D\Phi_{ts} g(H) U_t \phi_0\,\ran 
  +  \lan g(H)U_t\phi_0, D\Phi_{ts} R\,\ran
  +  \lan R, D\Phi_{ts} R\,\ran.
\end{split}
\end{align}

Now, we claim that, with $k_I$ defined in \eqref{kg}, there is $C>0$ s.t.  
\begin{align}\label{A-est}\notag 
g(H)u(x_{ts})\,A\,u(x_{ts})g(H)\,\dot\le\, k_I & g(H)u(x_{ts})^2g(H)
\\
& + C  s^{-1}g(H)\tilde u(x_{ts})^2g(H),
\end{align} 
where $\tilde u$ is an admissible function. 
To see this, we first estimate 
\begin{align} 
\label{A-est2}|\lan \psi, g(H)u(x_{ts})&\,A\,u(x_{ts})g(H) \psi\ran|
\notag
\\ 
& \le \|\n\x u(x_{ts})g(H)\psi\|\|p u(x_{ts})g(H)\psi\|.
\end{align} 
This inequality, together with \eqref{p-est}, gives 
\begin{align}\label{p-est1}
|\lan \psi, \, & g(H)u(x_{ts})\,A\, u(x_{ts})g(H) \psi\ran|\notag
\\ 
& \, \dot\le \,  \| u(x_{ts})g(H)\psi\| \Big(k_I \| u(x_{ts})g(H)\psi\|+s^{-1}\| \tilde u(x_{ts})g(H)\psi\|\Big),
\end{align} 
which implies \eqref{A-est}.
Now, using \eqref{A-est}, together with \eqref{DPhi-expr-x} 
and the definitions $u(x_{ts})^2=f'(x_{ts})$ and $h(x_{ts}):=\tilde u(x_{ts})^2$, 
we obtain
\begin{align}\label{DPhi-est1}
g(H)&D\Phi_s(t)g(H)\notag
\\ 
&  \dot\le \, (k_I -v)s^{-1} g(H)f'(x_{ts}) g(H) + C s^{-2} g(H)h(x_{ts}) g(H).
\end{align}

Next, we claim that the three terms in the second line of \eqref{Ht-pf1} are all at least 
$O(s^{-1} w_t)$. 
For the first term this bound follows using \eqref{DPhi-expr-x}, \eqref{pR0} and \eqref{p-est}:
\begin{align*}
& \big| \lan R, D\Phi_{ts} g(H) U_t \phi_0\,\ran \big| 
  \\
  & =
  s^{-1} \big| \lan R, u(x_{ts}) A u(x_{ts}) \, g(H) U_t \phi_0\,\ran 
 - v \lan R, u^2(x_{ts}) g(H) U_t \phi_0\,\ran \big| 
  \\
  & \lesssim s^{-1} \| R \| \big( \| \phi_0 \| + \| p u(x_{ts}) g(H) U_t \phi_0 \| \big) 
  = s^{-1} O(w_t).
\end{align*}
The second term can be treated identically. 
For the last term on the right-hand side of \eqref{Ht-pf1} we can 
use \eqref{pR} to get an even better bound of $O(s^{-1}w_t^{2})$:
\begin{align*}
& \big| \lan R, D\Phi_{ts} R \,\ran \big| 
  \lesssim s^{-1} \| R \| \big( \| R \| + \| p R \| \big) = s^{-1} O(w_t^2).
\end{align*}

Going back to \eqref{Ht-pf1}, using also \eqref{DPhi-est1} to bound the first terms on the r.h.s., 
we get, for some admissible function $\tilde f$,
\begin{align}
\notag
\p_t\lan \Phi_{ts}\ran_t 
& = \lan g(H)U_t\phi_0, \, D\Phi_{ts}g(H)U_t\phi_0\,\ran 
  + O\big(s^{-1} w_t ) 
  \\
\notag
&\dot\le \, (k_I-v)s^{-1} \lan g(H)U_t\phi_0, \,f'(x_{ts})\,g(H)U_t\phi_0\ran 
  \\
\label{DPhi-est1-Ht}
  & + Cs^{-2} \lan g(H)U_t\phi_0, \,\tilde f(x_{ts})\,g(H)U_t\phi_0\ran
  + O(s^{-1} w_t). \end{align}

Next, 
passing back to $\psi_t$ by using the pull-through relations in the opposite direction 
and the fact that $\tilde{f}$ is bounded, we obtain
\begin{align} \label{DPhi-est2-Ht}
\p_t\lan \Phi_{ts}\ran_t \, \le \,& (k_I-v)s^{-1} \,\lan f'(x_{ts})\ran_t + Cs^{-2}   + C s^{-1} w_t.
 \end{align}
Since $v>k_I$, we can drop the first term on the r.h.s. Using 
the definition $\Phi_{ts}:= f(x_{ts})$, the conditions \eqref{eq:integr_wt} and $s\geq t$, we find 
\begin{align} \label{propag-est2-Ht} 
\lan f(x_{ts})\ran_t\le \lan f(x_{0s})\ran_0
+ C s^{-1}. 
\end{align}
For the first term on the r.h.s., we claim that, for $0 < \beta < 1$,
\begin{align} \label{local-est3-Ht} 
\lan f(x_{0s})\ran_0=O(s^{2\beta-2}) + O( w_{s^\beta}^2 ).
\end{align} 
To prove this estimate,  
we recall  $\psi_t:=U_t g_+(H)\chi^-_b\phi$, note that 
\begin{equation*}
\lan f(x_{0s})\ran_0=\| \chi(x_{0s})\psi_0\|^2,
\end{equation*} 
with $\chi^2= f$, and pass from $g_+(H)$ to $g_{s^\beta}(H):=U_{s^\beta}^{-1} g(H) U_{s^\beta}$, 
with $\beta<1$, paying with the error $O(w_{s^{\beta}})$ (see \eqref{PTR}):
\begin{equation*}
\chi(x_{0s})\psi_0=\chi(x_{0s})g_+(H)\chi^-_b\phi=\chi(x_{0s})g_{s^\beta}(H)\chi^-_b\phi+O(w_{s^{\beta}}).
\end{equation*} 
In Lemma \ref{lem:chi-g+-est} of Appendix \ref{sec:commut} we show that 
\begin{align*}
\chi(x_{0s})g_{s^\beta}(H)\chi^-_b =O(s^{ \beta-1}).
\end{align*} 
This, together with the previous estimate  yields (after squaring up) \eqref{local-est3-Ht}. Finally,
\eqref{local-est3-Ht} and \eqref{propag-est2-Ht} imply
\begin{align*}
\lan f(x_{ts})\ran_t\le Cs^{-1} + C s^{2\beta-2} + C w_{s^\beta}^2
\end{align*} 
which, in view of the definition of $f$, gives, after setting $s=t$,
Theorem \ref{thm:max-vel-x-Ht} in the case of integrable $w_t =\int_t^{\infty} \| W_r\|_{W^{1,\infty}} dr$.

The proof in the case of integrable $w_t' = \max_{1\le|\alpha|\le2} \int_t^{\infty} \|\p_x^\al W_r\|_\infty dr$
is identical, the only difference being that the estimate
\begin{equation*}
\big\| [g(H),W_r] \big\| \lesssim \| W_r  \|_{L^\infty} 
\end{equation*}
used to prove \eqref{g+gt} is replaced by
\begin{equation*}
\big \| [g(H), W_r] \big \| \lesssim \max_{1\le|\alpha|\le2} \|\p_x^\al W_r \|_{L^\infty} ,
\end{equation*}
see Lemma \ref{lem:g-W-comm} below.
\end{proof}

\medskip
\section{Commutator expansions and localization estimate} \label{sec:commut}

First, we state commutator expansions and estimates, first derived in \cite{SigSof} 
and then improved in \cite{Skib, HunSig1,HunSigSof} (see also \cite[Appendix B]{ArbPusSigSof}).
We follow \cite{HunSig1} and refer to this paper as well as to \cite{ArbPusSigSof} for 
details and references.
To begin with, we mention that, by the Helffer-Sj\"ostrand formula, 
a function $f(A)$ of a self-adjoint operator $A$ can be written as
\begin{align} \label{fA-repr}
&f(A)=\int d\widetilde f(z)(z-A)^{-1},
\end{align}
where $\widetilde f(z)$ is an almost analytic extension of $f$ to $\C$ supported in a complex neighborhood of $\supp f$.
For $f\in C^{n+2}(\R)$, we can choose $\widetilde f$ satisfying the estimates (see (B.8) of \cite{HunSig1}):
\begin{align} \label{tildef-est}
&\int |d\widetilde f(z)||\im(z)|^{-p-1}\ls \sum_{k=0}^{n+2}\|f^{(k)}\|_{k-p-1},\end{align}
where $\|f \|_{m}:=\int \x^m |f(x)|.$ 

The essential commutator estimates are incorporated in the following lemma. We refer the reader
to e.g. \cite[Lemma A.1]{ArbPusSigSof} for a proof. 

\begin{lemma}\label{lem:commut-exp} 
Let $f\in C^\infty(\R)$ be bounded, 
  with $\sum_{k=0}^{n+2}\|f^{(k)}\|_{k-2}<\infty$, for some $n\ge 1$.
Let  $x_s=s^{-1}(\x-a)$ for $a >0 $ and
$1\le s<\infty$. Suppose that $H$ satisfies \eqref{v-cond} and let 
 $g\in C_0^\infty(\R)$. Then, for any $n\ge 1$, 
 \begin{align}
 \label{comm-exp-x} [g(H), f(x_s)]&=  \sum_{k=1}^{n-1}\frac{s^{- k}}{k!}B_k f^{(k)}(x_s) +O(s^{-n}), 
 \end{align}
 uniformly in $a\in \R$, where $H^j B_k, j=0, 1, k =1, \dots, n-1,$ 
 are bounded operators and $\|H^j O(s^{-n})\|\ls s^{-n}, j=0, 1$. 
 For $n=1$, the sum on the r.h.s. is omitted. 
\end{lemma}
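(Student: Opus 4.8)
The plan is to combine the Helffer--Sj\"ostrand representation \eqref{fA-repr} with an iterated use of the resolvent identity, and to close the estimates using the weight bounds \eqref{tildef-est} for the almost analytic extension together with the relative bound \eqref{v-cond}.

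Fix $n\ge 1$. First I would choose an almost analytic extension $\widetilde g$ of $g$, supported in a small complex neighbourhood of $\supp g$ and obeying \eqref{tildef-est}, and write $g(H)=\int d\widetilde g(z)\,(z-H)^{-1}$; since $g\in C_0^\infty$, the variable $z$ stays in a fixed bounded set, a fact to be used freely below. Writing $R(z):=(z-H)^{-1}$, the elementary identity $[R(z),f(x_s)]=R(z)\,[H,f(x_s)]\,R(z)$ is the starting point, and one notes that $[H,f(x_s)]=[-\tfrac12\Delta,f(x_s)]$ because $V$ and $f(x_s)$ are both multiplication operators. Iterating the identity $R(z)A=AR(z)+R(z)[H,A]\,R(z)$, applied successively to $A=\mathrm{ad}_H^{(j)}\!\big(f(x_s)\big)$ for $j=1,\dots,n-1$, and using that $\int d\widetilde g(z)\,R(z)^{k+1}=\tfrac1{k!}\,g^{(k)}(H)$ (integration by parts in $z$), yields
\begin{equation}\label{sketch-1}
[g(H),f(x_s)]=\sum_{k=1}^{n-1}\frac{1}{k!}\,\mathrm{ad}_H^{(k)}\!\big(f(x_s)\big)\,g^{(k)}(H)+\mathcal R_n,
\end{equation}
where $\mathcal R_n:=\int d\widetilde g(z)\,R(z)\,\mathrm{ad}_H^{(n)}\!\big(f(x_s)\big)\,R(z)^{n}$.

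The second step is to expand each iterated commutator in powers of $s^{-1}$. Using $\n\big(f(x_s)\big)=s^{-1}f'(x_s)\,\n\x$, an induction on $k$ shows that $\mathrm{ad}_H^{(k)}\!\big(f(x_s)\big)$ is a finite sum of terms of the form $s^{-m}\,c(x)\,f^{(m)}(x_s)\,p^{\beta}$ with $k\le m$, $|\beta|\le m$, and $c$ a bounded function built from bounded derivatives of $\x$ and of $V$ (the latter entering only through the commutators $[V,\,\cdot\,]$, hence involving only derivatives of $V$, which are controlled by the standing regularity hypotheses). I would then substitute these into \eqref{sketch-1}, move the factors $g^{(k)}(H)$ and the momenta $p^{\beta}$ to the left of $f^{(m)}(x_s)$ (each transposition costing a further $O(s^{-1})$ term, again by the expansion applied to $f^{(m)}$), and regroup all contributions by the total power of $s^{-1}$; this produces
\begin{equation*}
[g(H),f(x_s)]=\sum_{k=1}^{n-1}\frac{s^{-k}}{k!}\,B_k\,f^{(k)}(x_s)+\big(\text{remainder of order }s^{-n}\big),
\end{equation*}
with each $B_k$ a finite sum of operators of the form $p^{\beta}g^{(j)}(H)\,c(x)$, $|\beta|\le k$, $j\le k$, $c$ bounded, and the remainder absorbing $\mathcal R_n$ together with all terms of order $s^{-n}$ or smaller.

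Finally I would bound the operator norms. Since $g$ is compactly supported, $p^{\beta}g^{(j)}(H)$ and $Hp^{\beta}g^{(j)}(H)$ are bounded for $|\beta|\le j$: one dominates $p^{\beta}$ by powers of $\langle H\rangle$ using \eqref{v-cond}, and passes $H$ through $p^{\beta}$ by a single commutator whose coefficients are bounded under our assumptions on $V$; hence $H^jB_k$ is bounded for $j=0,1$. For the remainder one estimates $\big\|H^jR(z)\,\mathrm{ad}_H^{(n)}(f(x_s))\,R(z)^{n}\big\|\ls s^{-n}\,|\im z|^{-N}$ for a suitable $N=N(n)$, converting each momentum occurring in $\mathrm{ad}_H^{(n)}$ into a factor $R(z)$ at the price of powers of $|\im z|^{-1}$ and $\langle z\rangle$ (harmless on the bounded support of $\widetilde g$); the estimate \eqref{tildef-est}, available to all orders since $g\in C_0^\infty$, then renders the $z$-integral finite and gives $\|H^jO(s^{-n})\|\ls s^{-n}$. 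Uniformity in $a$ is clear, as only derivatives of $f$ and of $\x$ enter the coefficients; and for $n=1$ the sum in \eqref{sketch-1} is empty, the statement reducing to $\|H^j[g(H),f(x_s)]\|\ls s^{-1}$, which follows at once from $[H,f(x_s)]=O(s^{-1})$ and the resolvent identity. The principal difficulty I anticipate is the bookkeeping in the second step: correctly organizing the multi-index expansion of $\mathrm{ad}_H^{(k)}\!\big(f(x_s)\big)$ and resumming by powers of $s^{-1}$ into coefficients of precisely the form $B_k f^{(k)}(x_s)$ with $H^jB_k$ bounded, while simultaneously tracking the growth of the integrand in $|\im z|^{-1}$ so that the contour integral defining the remainder converges against the bound \eqref{tildef-est}.
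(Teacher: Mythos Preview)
The paper does not prove this lemma but defers to \cite[Lemma~A.1]{ArbPusSigSof}. Your strategy of combining the Helffer--Sj\"ostrand formula with iterated resolvent commutators is the standard one, but you have applied it in the wrong direction, and this leads to a genuine gap.

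You represent $g(H)$ via \eqref{fA-repr} and expand in $\mathrm{ad}_H^{(k)}\big(f(x_s)\big)$. The key inductive claim, that every term of $\mathrm{ad}_H^{(k)}\big(f(x_s)\big)$ carries at least $k$ powers of $s^{-1}$ (your ``$k\le m$''), is false. Already for $k=2$: since
\[
\partial_j\partial_\ell\big(f(x_s)\big)=s^{-2}f''(x_s)\,\partial_j\langle x\rangle\,\partial_\ell\langle x\rangle+s^{-1}f'(x_s)\,\partial_j\partial_\ell\langle x\rangle,
\]
the double commutator $\mathrm{ad}_H^{(2)}\big(f(x_s)\big)$ contains the term $s^{-1}f'(x_s)\big(\partial_j\partial_\ell\langle x\rangle\big)\partial_j\partial_\ell$, which has $m=1<2=k$. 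Moreover $[V,p^\beta]$ (present as soon as $|\beta|\ge1$) contributes further terms with no additional $s^{-1}$ at all, and in passing requires $\nabla V\in L^\infty$, which is not implied by the stated hypothesis \eqref{V-cond}. Consequently your remainder $\mathcal R_n=\int d\widetilde g(z)\,R(z)\,\mathrm{ad}_H^{(n)}\big(f(x_s)\big)\,R(z)^{n}$ is only $O(s^{-1})$, not $O(s^{-n})$, and the expansion does not close.

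The cure, and what the referenced proof does, is to apply \eqref{fA-repr} to $f(x_s)$ rather than to $g(H)$: write $f(x_s)=\int d\widetilde f(z)\,(z-x_s)^{-1}$ and iterate by commuting $(z-x_s)^{-1}$ past $g(H)$. Because $[\,\cdot\,,x_s]=s^{-1}[\,\cdot\,,\langle x\rangle]$, each step yields exactly one explicit factor $s^{-1}$, the $k$-th term becomes $s^{-k}B_k\,f^{(k)}(x_s)$ with $B_k$ a constant multiple of $\mathrm{ad}_{\langle x\rangle}^{(k)}\big(g(H)\big)$, and the remainder carries a genuine $s^{-n}$. Boundedness of $H^jB_k$ and of the remainder then follows from a second Helffer--Sj\"ostrand representation, this time of $g(H)$, using that $[\langle x\rangle,H]=[\langle x\rangle,-\tfrac12\Delta]$ involves only bounded derivatives of $\langle x\rangle$ and a single $p$ (so no regularity of $V$ beyond \eqref{V-cond} is needed); the hypothesis $\sum_{k=0}^{n+2}\|f^{(k)}\|_{k-2}<\infty$ is precisely what controls, via \eqref{tildef-est}, the $\widetilde f$-integral against the $n+1$ powers of $|\mathrm{Im}\,z|^{-1}$ coming from the resolvents of $x_s$.
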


\smallskip
Now, we prove a localization 
  estimate used at the of the proof of Theorem \ref{thm:max-vel-x-Ht}. 
Recall the definition $g_{s^\beta}(H):=U_{s^\beta}^{-1}g(H) U_{s^\beta}$.

\begin{lemma}\label{lem:chi-g+-est} 
Suppose that
\begin{equation}\label{eq:def_wt'1}
\int_0^\infty w_t^{(1)}dt<+\infty , \quad w_t^{(1)} := \max_{|\alpha|=1} \int_t^{\infty} \|\p_x^\al W_r\|_\infty dr.
\end{equation}
With the notations of Eq. \eqref{eq:nota}, we have 
\begin{align}\label{eq:commut_fin}
\|\chi(x_{0s})g_{s^\beta}(H)\chi^-_b\| =O(s^{\beta-1}).
\end{align}
\end{lemma}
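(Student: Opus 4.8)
The plan is to reduce the estimate to a propagation/commutator estimate for the free-type flow $U_{s^\beta}$ together with a standard non-stationary phase (or resolvent) bound for the localized spectral projection. Write $\tau := s^\beta$ and expand
\[
\chi(x_{0s})g_\tau(H)\chi^-_b = \chi(x_{0s})U_\tau^{-1}g(H)U_\tau\chi^-_b.
\]
The key geometric observation is that on the support of $\chi(x_{0s})$ we have $|x|\gtrsim a+vt|_{t=0}= a > b$, so $\chi(x_{0s})$ and $\chi^-_b$ have separated supports (their supports are at distance $\gtrsim a-b$, which is a fixed positive quantity); moreover $x_{0s}=s^{-1}(\langle x\rangle - a)$ so $\chi(x_{0s})$ in fact localizes to $|x|\le a+s\lesssim s$. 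First I would insert a smooth cutoff: since $g(H)$ maps $L^2$ into functions that are, microlocally, supported in momenta $|p|\lesssim k_I+1$ (use $g(H)=g(H)\tilde g(H)$ with $\tilde g\in C^\infty_0$ and the fact that $\langle p\rangle\tilde g(H)$ is bounded by \eqref{V-cond}), the propagated function $U_\tau g(H)U_\tau\chi^-_b\phi$ cannot have travelled a spatial distance more than $O(\tau)=O(s^\beta)$ from the support of $\chi^-_b$. Since $\beta<1$, for $s$ large this keeps it a distance $\gtrsim s$ away from $\mathrm{supp}\,\chi(x_{0s})$ — wait, more carefully: $\chi(x_{0s})$ lives on $|x|\in[a, a+s]$, and the propagated datum lives on $|x|\lesssim b + O(s^\beta)$, so these two regions overlap only in $|x|\approx a$, a fixed-size region. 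This is the heart of the matter and is where the $s^{\beta-1}$ gain comes from.

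The technical execution I would carry out as follows. Using the commutator identity \eqref{comm-id1-app} (i.e. $[\langle x\rangle, U_\tau] = -iU_\tau\int_0^\tau U_\sigma^{-1}[\langle x\rangle, H_\sigma]U_\sigma\,d\sigma$, and $[\langle x\rangle, H_\sigma] = [\langle x\rangle,-\tfrac12\Delta]$ which is a first-order operator with bounded coefficients), one shows that conjugating a bounded spatial weight through $U_\tau$ costs at most a factor $\langle\tau\rangle$: schematically $\|\langle x\rangle U_\tau f\| \lesssim \langle\tau\rangle\|\langle H\rangle^{1/2}f\| + \|\langle x\rangle f\|$, which is exactly the $k=1$ case of Lemma \ref{lemL2gamma}/\eqref{eq:induc} (with $v=0$, so there is no $W^\psi$ and no decay is even needed). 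Dually, $\|\langle x\rangle^{-1}U_\tau\langle x\rangle\| \lesssim \langle\tau\rangle$. Then I would write, with $\langle x\rangle^{-1}$ a bounded localizer and using that $\chi(x_{0s})$ is supported where $\langle x\rangle \gtrsim s$ so that $\chi(x_{0s}) = \chi(x_{0s})\langle x\rangle^{-1}\cdot\langle x\rangle \lesssim s^{-1}\langle x\rangle\,\chi(x_{0s})$ — no: instead, $\|\chi(x_{0s}) A\| \lesssim s^{-1}\|\langle x\rangle\chi(x_{0s})A\|$ is the wrong direction. The clean way is: since $\chi^-_b\phi$ is supported in $|x|\le b$, $\|\langle x\rangle^{-1}\cdot\|$ controls nothing, so instead use $\chi^-_b = \langle x\rangle \cdot \langle x\rangle^{-1}\chi^-_b$ with $\|\langle x\rangle\chi^-_b\|\lesssim b$, no — $\chi^-_b$ multiplies by a bounded function. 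The right bookkeeping: insert $\chi(x_{0s}) = s^{-1}\big(s\,\chi(x_{0s})\big)$ and note $s\,\chi(x_{0s})$ is, pointwise, $\le \langle x\rangle$ times a bounded function (since on its support $\langle x\rangle \ge a \ge$ a constant, but also $\langle x\rangle \le a+s$, so $s\chi(x_{0s}) \le \langle x\rangle \cdot (s/(a)) $... ) — I will instead simply use $\|\chi(x_{0s}) T\chi^-_b\| \le s^{-\beta}\,\|\,|x-y|^{\beta}\text{-weighted kernel of }T\,\|$ heuristics replaced by the rigorous statement: write $\chi(x_{0s})g_\tau(H)\chi^-_b = \chi(x_{0s})\langle x\rangle^{-1}\cdot\langle x\rangle U_\tau^{-1}\langle x\rangle^{-1}\cdot \langle x\rangle g(H)U_\tau\chi^-_b$...

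Let me state the plan cleanly rather than flailing. I would prove \eqref{eq:commut_fin} in three steps.

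\textbf{Step 1.} Since $\mathrm{supp}\,\chi(x_{0s})\subset\{|x|\le a+s\}$ and more importantly is bounded below, and $g(H)=g(H)\langle H\rangle^{-N}\langle H\rangle^{N}$ for large $N$, combine a commutator expansion (Lemma \ref{lem:commut-exp}, which gives $[\,\chi(x_{0s}), g(H)\,] = O(s^{-1})$ with operator-bounded remainders) to move $\chi(x_{0s})$ through $g(H)$. This reduces the problem, up to an $O(s^{-1})$ error (acceptable since $s^{-1}\le s^{\beta-1}$ for $\beta\in(0,1)$), to estimating $g(H)\chi(x_{0s})U_\tau^{-1}\,(\text{bounded})\,U_\tau\chi^-_b$ — actually to estimating $\|\,\langle x\rangle^{-1}\chi(x_{0s})\,U_{\tau}^{-1}g(H)U_\tau\chi^-_b\,\|$ after absorbing one power of $\langle x\rangle$.

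\textbf{Step 2.} Conjugation estimate: using \eqref{comm-id1-app} with $H_\sigma = H$ (no potential $W$ here, as $g_{s^\beta}$ is defined by the free-of-$W$ part in this lemma — note that in the body of the proof of Theorem \ref{thm:max-vel-x-Ht} the relevant conjugation is by $U_{s^\beta}$ generated by $H_t=H+W_t$, so I should actually track the $W$-error, which is $O(w^{(1)}_{s^\beta})\cdot\tau = O(\int_{s^\beta}^\infty\|\nabla W_r\|_\infty dr)$; since $\int_0^\infty w_t^{(1)}dt<\infty$ implies $w_t^{(1)}\to 0$ and in fact $w_{s^\beta}^{(1)} = o(s^{-\beta})$... this needs care, but at worst it contributes the stated $O(s^{\beta-1})$ if $\|\nabla W_r\|$ decays fast enough, which under \eqref{eq:def_wt'1} it does), show
\[
\big\|\,[\langle x\rangle^{\mu}, U_\tau]\,\big\| \lesssim \tau\quad (0\le\mu\le 1)
\]
on the relevant function, so that pushing $\langle x\rangle^{-1}$ from the left through $U_\tau^{-1}$ and onto $\chi^-_b$ (which already has bounded support) costs a factor $\tau = s^\beta$.

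\textbf{Step 3.} The net gain: $\chi(x_{0s})$ carries a factor $s^{-1}$ relative to $\langle x\rangle\chi(x_{0s})$ (since $\langle x\rangle\gtrsim$ nothing useful, but $\chi(x_{0s})$ is nonzero only where $\langle x\rangle-a\le s$, equivalently where $s^{-1}\ge (\langle x\rangle-a)^{-1}$... ) — concretely, $\|\chi(x_{0s})f\| \le s\,\|\,(\langle x\rangle-a)^{-1}\chi(x_{0s})f\,\|$ is false; rather the correct elementary fact is that multiplication by $\chi(x_{0s})$, which equals $1$ only on $\langle x\rangle - a\le s/2$ and is supported on $\langle x\rangle-a\le s$, commutes with everything and is just bounded by $1$; the $s^{-1}$ has to come from the separation $\mathrm{dist}(\mathrm{supp}\,\chi(x_{0s}), \mathrm{supp}\,\chi^-_b)\gtrsim a - b >0$ together with the $O(s^{-n})$ decay of off-diagonal terms in Lemma \ref{lem:commut-exp} — indeed since $a>b$ these two cutoffs are at fixed positive distance, so the commutator $\chi(x_{0s})g(H)\chi^-_b$ is $O(s^{-n})$ for all $n$, not merely $O(s^{-1})$. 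Combined with Step 2's $O(\tau)=O(s^\beta)$ cost, the product is $O(s^\beta\cdot s^{-n}) = O(s^{\beta-1})$ choosing $n\ge 1$.

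The main obstacle I anticipate is bookkeeping the interaction between the conjugation by $U_\tau$ (generated by $H_t=H+W_t$, a potential depending on time) and the spatial cutoffs: one must verify that the $W_t$-contributions to $[\langle x\rangle, U_\tau]$ are controlled by $w^{(1)}_{s^\beta}$ and that this quantity is $O(s^{-1})$ (or at least $O(s^{\beta-1})/\text{stuff}$) under hypothesis \eqref{eq:def_wt'1}; this is where the precise form of the integrability assumption and the choice $\beta<1$ enter. Everything else is the standard Helffer–Sjöstrand/commutator-expansion machinery of Lemma \ref{lem:commut-exp} together with the conjugation bound \eqref{eq:induc} already proved in Lemma \ref{lemL2gamma} (specialized to $v=0$).
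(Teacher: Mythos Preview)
Your proposal has the right starting observation (the supports of $\chi(x_{0s})$ and $\chi^-_b$ are disjoint, since $a>b$) but misidentifies both the support of $\chi(x_{0s})$ and the mechanism producing the $s^{-1}$ gain. Recall that here $\chi^2=f$ with $f\in\mathcal F$: $f$ is step-like, supported on $\R^+$ and equal to $1$ on $[c-v,\infty)$. Hence $\chi(x_{0s})$ is \emph{not} supported in $\{|x|\le a+s\}$; it is supported in $\{\langle x\rangle\ge a\}$ and equals $1$ on $\{\langle x\rangle\ge a+(c-v)s\}$. So there is no non-stationary-phase or ``off-diagonal $O(s^{-n})$'' argument available from disjoint compact supports, and your Step~3 does not go through.

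The actual source of the $s^{-1}$ is much simpler: the chain rule gives $\nabla_x\chi(x_{0s})=O(s^{-1})$ and $\Delta_x\chi(x_{0s})=O(s^{-2})$, because $x_{0s}=s^{-1}(\langle x\rangle-a)$. The paper exploits this directly. Since $\chi(x_{0s})\chi^-_b=0$, one writes
\[
\chi(x_{0s})g_{s^\beta}(H)\chi^-_b
= [\chi,U_{s^\beta}^{-1}]\,g(H)U_{s^\beta}\chi^-_b
+U_{s^\beta}^{-1}[\chi,g(H)]\,U_{s^\beta}\chi^-_b
+U_{s^\beta}^{-1}g(H)\,[\chi,U_{s^\beta}]\,\chi^-_b.
\]
The middle term is $O(s^{-1})$ by Lemma~\ref{lem:commut-exp}. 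For the outer terms, the identity \eqref{comm-id1-app} gives $[\chi,U_{s^\beta}]=iU_{s^\beta}\int_0^{s^\beta}U_r^{-1}[H_r,\chi]U_r\,dr$, and $[H_r,\chi]=-ip\cdot\nabla\chi+\tfrac12\Delta\chi$ is $p\cdot O(s^{-1})+O(s^{-2})$ (note $[W_r,\chi]=0$, so there is no $W$-term here). Integrating over $r\in[0,s^\beta]$ produces the factor $s^\beta\cdot s^{-1}=s^{\beta-1}$; the extra $p$ is controlled by writing $p=(H+c)^{1/2}B$ and commuting $(H+c)^{1/2}$ through $U_r$ and $U_{s^\beta}$ onto $g(H)$. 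This last commutation, $[S,U_r]=O(1)$, is precisely where the hypothesis \eqref{eq:def_wt'1} enters: $[H_r,S]=[W_r,S]$ is bounded by $\|\nabla W_r\|_\infty$, and integrability of $w_t^{(1)}$ makes the time integral uniformly bounded. Your attempt to track the $W$-dependence through $[\langle x\rangle,U_\tau]$ is misdirected, since $W_r$ commutes with spatial multiplication; the $W$-dependence arises only when moving $S=(H+c)^{1/2}$ through the flow.
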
 

\begin{proof}
Let $\chi\equiv \chi(x_{0s})$. 
Using $\chi \chi^-_b=0$, we write 
\begin{align}\label{chi-g+-exp}
\chi g_{s^\beta}(H)\chi^-_b
 = [\chi, U_{s^\beta}^{-1}]g(H) U_{s^\beta}\chi^-_b+&U_{s^\beta}^{-1} [\chi, g(H)] U_{s^\beta}\chi^-_b\notag
 \\
& + U_{s^\beta}^{-1}g(H)[\chi,  U_{s^\beta}]\chi^-_b.
\end{align}
Since $[\chi,  U_{s^\beta}]= U_{s^\beta}(U_{s^\beta}^{-1} \chi U_{s^\beta}-\chi)
= U_{s^\beta}\int_0^{s^\beta}\p_r(U_r^{-1} \chi U_r)dr$ and $\p_r(U_r^{-1} \chi U_r)=i U_r^{-1} [H_r, \chi] U_r$ ,
we have
\begin{align}\label{chi-U-comm-exp'}
[\chi, U_{s^\beta}]=i U_{s^\beta}\int_0^{s^\beta} U_r^{-1} [H_r, \chi] U_rdr.
\end{align}
Note that $[H_r, \chi]=-i p\n\chi + \frac12(\Delta\chi)$. 
We control $p$ by $S^{-1}=(H+c)^{-1/2}$, 
with $S=(H+c)^{1/2}$ and $c:=\inf H + 1$:
 \begin{align}\label{p-rep}p=S B=B' S,\end{align}  
 where $B:=(H+c)^{-1/2}p$ and $B':=p(H+c)^{-1/2}$, bounded operators.  
 Eq. \eqref{chi-U-comm-exp'}, together with the last two relations, gives
\begin{align}\label{chi-U-comm-exp''}
 [\chi, U_{s^\beta}]&= U_{s^\beta} \int_0^{s^\beta} U_r^{-1} \big(S B\n\chi
   + i\frac12(\Delta\chi) \big) U_r dr.
\end{align}
Next, we commute $(H+c)^{1/2}$ to the left. To this end, we apply the equation
\begin{align}\label{p-U-comm-est}
[S, U_{r}]=O(1),
\end{align}
which we now prove. 
First, we write $S=(H+c)^{1/2}=(H+c)(H+c)^{-1/2}$ and use the explicit formula 
$(H +c)^{-s}:=c'  \int_0^\infty (H +c+\om)^{-1} d\om/\om^s, $ 
 where $s\in (0, 1)$ and $c' :=[\int_0^\infty (1+\om)^{-1} d\om/\om^s]^{-1}$, to obtain $ [W_{r}, (H+c)^{1/2}]=O(w_r^{(1)})$. This implies the estimate $ [H_{r}, (H+c)^{1/2}]= [W_{r}, (H+c)^{1/2}]=O(w_r^{(1)})$, which, together with the fact that $w_r^{(1)}$ is integrable  
and the relation 
\begin{align}\label{p-U-comm-est'}
[S, U_{r}]=U_{r}\int_0^{r}i U_{r'}^{-1} [H_{r'}, S] U_{r'}dr',
\end{align} 
  yields \eqref{p-U-comm-est}.

Commuting $S$ in Eq. \eqref{chi-U-comm-exp''} to the left (either twice through $U_r^{-1}$ and $U_{s^\beta}$, or once through $U_{s^\beta}U_r^{-1}=U(s^\beta, r)$) and using  \eqref{p-U-comm-est}, $\n\chi=O(s^{-1})$ and $\Delta\chi=O(s^{-2})$, gives
\begin{align}
\notag [\chi, U_{s^\beta}] \notag &=S O(s^{\beta-1})+  \int_0^{s^\beta}  \big( O(s^{-1})  
   + O(s^{-2}) \big) U_r dr\\   
\label{chi-U-comm-exp} 
& =S O(s^{\beta-1}) + O(s^{\beta-1}).
\end{align}

A similar estimate holds for $[\chi, U_{s^\beta}^{-1}]= - [\chi, U_{s^\beta}]^*$:
\begin{align}
\label{chi-Uinv-comm-est}
[\chi, U_{s^\beta}^{-1}]=O(s^{\beta-1})S+O(s^{\beta-1}).
\end{align}
 
Now, the second term on the r.h.s. of the above relation produces the right bound, $O(s^{-1+\beta})$ 
and so does the first term multiplied by $g(H)$, as $(H+1)^{1/2} g(H)$ is a bounded operator. 
This shows that the first term on the r.h.s. of \eqref{chi-g+-exp} is of the order $O(s^{-1+\beta})$. 
The same estimates apply to  the third term on the r.h.s. of \eqref{chi-g+-exp} giving $O(s^{-1+\beta})$. 
For the second term on the r.h.s. of \eqref{chi-g+-exp}, we use  \eqref{comm-exp-x} to obtain $[\chi, g(H)] =O(s^{-1})$.
This proves the lemma.  
\end{proof}

 Finally we prove a lemma used in the proof of Theorem \ref{thm:max-vel-x-Ht} in the case of integrable $w_t' = \max_{1\le|\alpha|\le2} \int_t^{\infty} \|\p_x^\al W_r\|_\infty dr$.

\begin{lemma}\label{lem:g-W-comm} 
We have
\begin{align}\label{g-W-comm} 
\big \| [g(H), W_r] \big \| \lesssim \max_{1\le|\alpha|\le2} \|\p_x^\al W_r \|_{L^\infty} .
\end{align}
\end{lemma}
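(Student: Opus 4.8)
The plan is to use the Helffer--Sj\"ostrand representation \eqref{fA-repr} of $g(H)$ together with a resolvent identity to reduce the commutator $[g(H),W_r]$ to an integral of a resolvent sandwiching a first-order commutator $[H,W_r]$, which is a first-order differential operator with coefficients controlled by $\|\partial_x^\alpha W_r\|_{L^\infty}$, $|\alpha|=1,2$. Concretely, I would write
\begin{align}\label{HS-comm}
[g(H),W_r] = \int d\widetilde g(z)\, (z-H)^{-1}\,[H,W_r]\,(z-H)^{-1},
\end{align}
using that $[(z-H)^{-1},W_r] = (z-H)^{-1}[H,W_r](z-H)^{-1}$ and that $g\in C_0^\infty(\R)$ admits an almost analytic extension $\widetilde g$ with the bounds \eqref{tildef-est}.

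The first main step is to estimate $[H,W_r]$. Since $H=-\tfrac12\Delta+V$ and $V$ commutes with the multiplication operator $W_r$, we have $[H,W_r]=-\tfrac12[\Delta,W_r]=-\tfrac12\big((\Delta W_r)+2(\nabla W_r)\cdot\nabla\big)$, so that $[H,W_r] = a_r + b_r\cdot\nabla$ with $\|a_r\|_{L^\infty}\lesssim \|\partial_x^2 W_r\|_{L^\infty}$ and $\|b_r\|_{L^\infty}\lesssim\|\partial_x W_r\|_{L^\infty}$. The second step is to control the operator norm of $(z-H)^{-1}[H,W_r](z-H)^{-1}$: the term $a_r(z-H)^{-1}$ is bounded by $|\Im z|^{-1}\max_{|\alpha|=2}\|\partial_x^\alpha W_r\|_{L^\infty}$, while for the term $(z-H)^{-1}b_r\cdot\nabla(z-H)^{-1}$ one writes $\nabla(z-H)^{-1} = \nabla(H+c)^{-1/2}\cdot(H+c)^{1/2}(z-H)^{-1}$ and uses that $\nabla(H+c)^{-1/2}$ is bounded (this is the relation \eqref{p-rep}, available under \eqref{V-cond}) and $\|(H+c)^{1/2}(z-H)^{-1}\|\lesssim |\Im z|^{-1}\langle z\rangle^{1/2}$. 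Altogether one obtains a pointwise (in $z$) bound of the integrand in \eqref{HS-comm} of size
$\lesssim |\Im z|^{-2}\langle z\rangle^{1/2}\max_{1\le|\alpha|\le2}\|\partial_x^\alpha W_r\|_{L^\infty}$, which is integrable against $|d\widetilde g(z)|$ by \eqref{tildef-est} (taking $n$ large enough, since $\widetilde g$ is supported in a bounded neighborhood of $\supp g$ so $\langle z\rangle$ is bounded there). This yields \eqref{g-W-comm}.

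The main obstacle is the unboundedness of $[H,W_r]$ as an operator: the first-order term $b_r\cdot\nabla$ is not bounded on $L^2$, so one cannot simply pull a norm estimate through \eqref{HS-comm}; one genuinely needs to split off one factor of $(z-H)^{-1}$ to regularize the derivative, and to know that $\nabla(H+c)^{-1/2}$ is bounded --- which is exactly where Condition \eqref{V-cond} on $V$ enters. A cleaner alternative, which avoids tracking $z$-dependence explicitly, is to write $g=g_1 g_2$ with $g_1,g_2\in C_0^\infty$, expand $[g(H),W_r]=g_1(H)[g_2(H),W_r]+[g_1(H),W_r]g_2(H)$, and for each factor apply the commutator expansion \eqref{comm-exp-x}-type machinery of Lemma~\ref{lem:commut-exp}: since $W_r$ is a (smooth in $x$) bounded potential, the analogue of \eqref{comm-exp-x} gives $[g_i(H),W_r] = \sum_{k=1}^{n-1}\tfrac{1}{k!}B_k (\partial_x^k W_r) + O(\text{l.o.t.})$ with $B_k$ bounded, so the leading terms are controlled by $\|\partial_x W_r\|_{L^\infty}$ and $\|\partial_x^2 W_r\|_{L^\infty}$. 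Either route gives \eqref{g-W-comm}; I would present the Helffer--Sj\"ostrand computation as it is the most self-contained.
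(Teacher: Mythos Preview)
Your proposal is correct and follows essentially the same route as the paper: the Helffer--Sj\"ostrand representation \eqref{fA-repr} reduces $[g(H),W_r]$ to an integral of $(z-H)^{-1}[H,W_r](z-H)^{-1}$, the commutator $[H,W_r]=\nabla W_r\cdot\nabla+\tfrac12\Delta W_r$ is $(H+c)^{1/2}$-bounded with constant $\max_{1\le|\alpha|\le2}\|\partial_x^\alpha W_r\|_{L^\infty}$, and the resolvent bound $\|(H+c)^{1/2}(z-H)^{-1}\|\lesssim\langle z\rangle^{1/2}/|\Im z|$ together with \eqref{tildef-est} (for $p=1$, using that $\widetilde g$ has compact support so $\langle z\rangle$ is bounded) gives the result. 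Your splitting into $a_r$ and $b_r\cdot\nabla$ and the use of $\nabla(H+c)^{-1/2}$ bounded is just a slightly more explicit way of saying ``$(H+c)^{1/2}$-bounded''; the alternative $g=g_1g_2$ route you sketch is unnecessary here.
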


\begin{proof} 
Using \eqref{fA-repr},  we have
 \begin{align}
\label{comm-exp1'}[W_r, g(H)]&=\int d\widetilde g(z)(z-H)^{-1}[W_r, H](z-H)^{-1}. 
     \end{align} 
Using this, estimate \eqref{tildef-est} for $\widetilde g(z)$ and the fact that $[W_r, H]=\n W_r\cdot\n+\frac12
\Delta W_r$ is $(H+c)^{1/2}$-bounded, where $c:=\inf H+1$ and the estimate 
\begin{equation*}
\|(H+c)^{1/2}R(z)\|\ls |\re z|^{1/2}/|\im z|,
\end{equation*}
we arrive at \eqref{g-W-comm}. 
\end{proof}

\medskip


\begin{thebibliography}{1}




\bibitem{ArbPusSigSof}  J. Arbunich, F. Pusateri, I.M. Sigal and A. Soffer,
Maximal velocity of propagation, Lett. Math. Phys. 111, no. 3, (2021), 1--16. 



\bibitem{Bec}
M. Beceanu,
\newblock \textit{Structure for wave operators om $\R^3$},
\newblock Transactions of the American Journal of Mathematics 136, no. 2, (2014), 255--308.

\bibitem{BeSch}
M. Beceanu and W. Schlag,
\newblock \textit{Structure formulas for wave operators under a small scaling invariant condition},
\newblock J. Spectr. Theory 9, no. 3, (2019), 967--990.


\bibitem{BenPorSchl} 
N.~Benedikter,  M. Porta, B. Schlein, \textit{Effective Evolution Equations from Quantum Dynamics}, SpringerBriefs in Mathematical Physics, 7, Springer, (2016).


\bibitem{BonyFaupSig}  J.-F. Bony,  J. Faupin,  I.M. Sigal,
\textit{Maximal velocity of photons in non-relativistic QED}, Adv. Math. 231, (2012), 3054--3078.

\bibitem{Caz} T. Cazenave, \textit{Semilinear Schr\"odinger Equations}, 
Courant Lecture Notes, 10, AMS, CIMS, 2003.


	\bibitem{exp1}
	M. Cheneau, P. Barmettler, D. Poletti, M. Endres, P. Schau\ss,  	T. Fukuhara,C. Gross, I. Bloch,
	 C. Kollath, and S. Kuhr,  \textit{Light-cone-like spreading of correlations in a quantum many-body system}, Nature 481, (2012), 484--487. 
	


\bibitem{CFKS} H. Cycon, R. Froese, W. Kirsch and B. Simon, \textit{Schr\"odinger Operators}. Texts and Monographs in Physics, Springer Verlag, 1987.


\bibitem{dAN} 
P. d'Ancona and F. Nicola, \textit{Sharp $L^p$ estimates for Schr\"odinger groups}, arXiv:1409.6853.


\bibitem{Dav} E.B. Davies, \textit{Spectral Theory and Differential Operators}. Cambridge University Press, 1995.


\bibitem{DerGer} J. Derezi\'nski and C. G\'erard, {\it Scattering Theory of Classical and Quantum $N$-Particle Systems}. Springer-Verlag, Berlin, 1997.


\bibitem{Dietze}
C. Dietze, \textit{Dispersive estimates for nonlinear Schr\"odinger equations with external potentials},
J. Math. Phys. 62, (2021), 111502. 



\bibitem{ElMaNayakYao}
D. V. Else, F. Machado, Ch. Nayak, and N. Y. Yao, 
\textit{Improved Lieb-Robinson bound for many-body Hamiltonians with power-law interactions},
Phys. Rev. A 101, (2020), 022333. 



\bibitem{FLS1}    
J. Faupin,  M. Lemm, I.M. Sigal 
\textit{Maximal speed for macroscopic particle transport in the Bose-Hubbard model}.  Phys. Rev. Letters 128(15) (2022), arXiv:2110.04313.

\bibitem{FLS2}    
J. Faupin,  M. Lemm, I.M. Sigal, 
\textit{On Lieb-Robinson for the Bose-Hubbard model},  
 Commun. Math. Phys.  394 (3) (2022) 1011-1037, arXiv:2109.04103.
	
\bibitem{Fossetal} 
M.~Foss-Feig, Z.-X.~Gong, C.W.~Clark, and A.V.~Gorshkov, 
\textit{Nearly-linear light cones in long-range interacting quantum systems} Phys.\ Rev.\ Lett. 114 (2015), 157201.
	
	
\bibitem{FincoYajima}
D. Finco and K. Yajima.
\textit{The $L^p$ boundedness of wave operators for Schr\"odinger operators with threshold singularities. 
II. even-dimensional case.} 
Journal of Mathematical Sciences, The University of Tokyo 13, no. 3 (2006), 277--346.


\bibitem{GebNachReSims}
M. Gebert, B. Nachtergaele, J. Reschke, R. Sims, 
\textit{Lieb-Robinson bounds and strongly continuous dynamics for a class of many-body fermion systems in $\R^d$}, 
Ann. Henri Poincar\'e 21, no. 11, (2020), 3609--3637.
 
\bibitem{GHW}
P. Germain, Z. Hani and S. Walsh,
\newblock \textit{Nonlinear resonances with a potential: multilinear estimates and an application to NLS},
\newblock Int. Math. Res. Not. 18, (2015), 8484--8544.

\bibitem{GEi}  C. Gogolin and J. Eisert, \textit{Equilibration, thermalization, and the emergence of statistical mechanics in closed quantum systems}, Rep. Prog. Phys. 79,, (2016), 056001.

\bibitem{HelffSj}  B. Helffer and J. Sj\"ostrand, \textit{Equation de Schr\"odinger avec champ magn\'etique et \'equation de Harper}. In \textit{Schr\"odinger operators}. H. Holden, A. Jensen eds., Lecture Notes in Physics, Vol. 345, Springer Verlag, 1989.


 
 
\bibitem{HeSk} I. Herbst and E. Skibsted, \textit{Free channel Fourier transform in the long-range N-body problem}, J. d'Analyse Math. 65, (1995), 297--332.

\bibitem{HunSig1}  W. Hunziker and I.M. Sigal, \textit{Time-dependent scattering theory of n-body quantum systems}, Rev. Math. Phys. 12, no. 8, (2000), 1033--1084. 

\bibitem{HunSigSof} W. Hunziker, I.M. Sigal and A. Soffer, \textit{Minimal escape velocities}, Comm. Partial Differential Equations 24, (1999), 2279--2295.

\bibitem{IvrSig} V. Ivrii and I.M. Sigal, \textit{Asymptotics of the ground state energies of large Coulomb systems}, Annals of Math. 138, (1993), 243--335.

\bibitem{JenNak} A. Jensen and S. Nakamura, \textit{Mapping Properties of functions of Schr\"odinger operators between $L^{p}$-spaces and Besov Spaces}. Advanced Studies in Pure Mathematics, 23, Spectral and Scattering Theory and Applications, pp. 187-209, 1994.





		\bibitem{KS} T.~Kuwahara and K.~Saito, \textit{Lieb-Robinson bound and almost-linear light-cone in interacting boson systems}, Phys.\ Rev.\ Lett.\ 127, (2021),  070403




\bibitem{LR}
	E.H.~Lieb, and D.W.~Robinson, \textit{The finite group velocity of quantum spin systems}, In Statistical mechanics, 425-431, Springer, Berlin, 1972
	


\bibitem{MatKoNaka}T. Matsuta, T. Koma, S. Nakamura, \textit{Improving the Lieb-Robinson bound for long-range interactions},
 Annales Henri Poincar\'e 18, (2017), 519--528.




	
\bibitem{NRSS}
B.~Nachtergaele, H.~Raz, B.~Schlein, and R.~Sims, 
\textit{Lieb-Robinson Bounds for harmonic and anharmonic lattice systems}, 
Comm.\ Math.\ Phys. 286, no. 3, (2009), 1073--1098.
	
		

\bibitem{NachSim}
B. Nachtergaele and R. Sims, 
\textit{Much ado about something:
Why Lieb-Robinson bounds are useful}, arXiv:1102.0835.

\bibitem{Nak}  S. Nakamura,  \textit{$L^p$-Estimates for Schr\"odinger operators}, 
Proc. Indian Acad. Sci. (Math. Sci.) Vol. 104, No. 4, (1994), pp. 653--666. 






\bibitem{PDdft}
F. Pusateri and I.M. Sigal.
\newblock \textit{Long-Time Behaviour of Time-Dependent Density Functional Theory}
\newblock Arch. Ration. Mech. Anal. 241 (2021), no. 1, 447-473.


\bibitem{RS4}  M.~Reed and B.~Simon.
\newblock {\it Methods of modern mathematical physics. {I}--{IV}}.
\newblock Academic Press, New
  York-London, 1975--1980.

\bibitem{RodSchl}  I. Rodnianski, W. Schlag,
\textit{Time decay for solutions of Schr\"dinger equations with rough and time-dependent potentials}, 
Invent. Math. 155 (3), (2003), 451--513.

\bibitem{SchSurvey}
W. Schlag, \textit{On pointwise decay of waves}. J. Math. Phys. 62, (2021), 061509. 
 
 	
\bibitem{SHOE}
 N.~Schuch, S.K.~Harrison, T.J.~Osborne, and J.~Eisert,
\textit{Information propagation for interacting-particle systems}. Phys.\ Rev.\ A 84, (2011), 032309.
 

\bibitem{Sig} I.M. Sigal, \textit{On long range scattering}. Duke Math. J. 60 (1990) 473--496.





\bibitem{SigSof} I.M. Sigal and A. Soffer, \textit{Local decay and propagation estimates for time-dependent and time-independent Hamiltonians}, 
Preprint, Princeton Univ. (1988)  http://www.math.toronto.edu/sigal/publications/SigSofVelBnd.pdf.

\bibitem{SigSof2} I.M. Sigal and A. Soffer, \textit{Long-range many-body scattering}, Invent. Math. 99, (1990), 115--143.

\bibitem{Skib} E. Skibsted, \textit{Propagation estimates for N-body Schr\"odinger operators}, Comm. Math. Phys. 142, (1992), 67--98.



\bibitem{WH}
		Z.~Wang and K.R.~Hazzard, \textit{Tightening the Lieb-Robinson Bound in Locally Interacting Systems}, PRX
Quantum \textbf{1}, (2020), 010303.
	

\bibitem{Yajima}
K. Yajima.
\newblock \textit{The $W^{k,p}$-continuity of wave operators for Schr\"odinger operators}, 
\newblock J. Math. Soc. Japan 47, no. 3, (1995), 551--581. 



	
		\bibitem{YL}
		C.~Yin and A.~Lucas, \textit{Finite speed of quantum information in models of interacting bosons at finite density}, arXiv:2106.09726.


\end{thebibliography}
\end{document}